\DeclarePairedDelimiter{\abs}{\lvert}{\rvert}
\DeclarePairedDelimiterX{\ip}[2]{\langle}{\rangle}{#1,#2}
\DeclarePairedDelimiter{\norm}{\lVert}{\rVert}
\DeclarePairedDelimiter{\paren}{\lparen}{\rparen}	
\DeclarePairedDelimiter{\brac}{\lbrace}{\rbrace}
\DeclarePairedDelimiter{\brak}{\lbrack}{\rbrack}
\newcommand{\map}[3]{ #1 \colon #2 \to #3 }
\newcommand{\R}[0]{ \mathbb{R} }
\newcommand{\Q}[0]{ \mathbb{Q} }
\newcommand{\N}[0]{ \mathbb{N} }
\newcommand{\C}[0]{ \mathbb{C} }
\newcommand{\dx}{\, \mathrm{d}x}
\newcommand{\dz}{\, \mathrm{d}z}
\newcommand{\ds}{\, \mathrm{d}s}
\newcommand{\mc}[1]{\mathcal{#1}}
\newcommand{\mcF}{ \mathcal{F}}
\newcommand{\mcO}{ \mathcal{O}}
\newcommand{\mcU}{ \mathcal{U}}
\newcommand{\mcT}{ \mathcal{T}}
\newcommand{\mcL}{ \mathcal{L}}
\newcommand{\mcS}{ \mathcal{S}}
\DeclareMathOperator{\sgn}{sgn}
\DeclareMathOperator{\ran}{ran}
\DeclareMathOperator{\lspan}{span}
\newtheorem{theorem}{Theorem}[section]
\newtheorem{proposition}[theorem]{Proposition}
\newtheorem{lemma}[theorem]{Lemma}
\theoremstyle{definition}
\newtheorem{example}[theorem]{Example}
\theoremstyle{remark}
\newtheorem{remark}[theorem]{Remark}
\numberwithin{equation}{section}
\begin{document}
    
\title{Traveling gravity water waves with critical layers}

\author{Ailo Aasen}
\address{Department of Mathematical Sciences, Norwegian University of Science and Technology, 7491 Trondheim, Norway}
\email{ailo@stud.ntnu.no}

\author{Kristoffer Varholm}
\address{Department of Mathematical Sciences, Norwegian University of Science and Technology, 7491 Trondheim, Norway}
\email{kristoffer.varholm@math.ntnu.no}

\thanks{The first author was supported by a grant from the I. K. Lykke Fund. The authors also acknowledge the support of the project Nonlinear Water Waves by the Research Council of Norway (Grant No. 231668).}
\subjclass[2010]{Primary 35Q31; Secondary 35B32, 35C07, 76B15}

\begin{abstract}
    We establish the existence of small-amplitude uni- and bimodal steady periodic gravity waves with an affine vorticity distribution, using a bifurcation argument that differs slightly from earlier theory. The solutions describe waves with critical layers and an arbitrary number of crests and troughs in each minimal period. An important part of the analysis is a fairly complete description of the local geometry of the so-called kernel equation, and of the small-amplitude solutions. Finally, we investigate the asymptotic behavior of the bifurcating solutions.
\end{abstract}

\maketitle

\section{Introduction}
    Up until fairly recently, most authors working with steady water waves have made the assumption that the vorticity
    \begin{equation}
        \label{vort}
        \omega \coloneqq v_x - u_y
    \end{equation}
    of the velocity field $(u,v)$ vanishes identically. Such waves are known as irrotational, as opposed to rotational waves where $\omega$ is allowed to be nonzero. Rotational waves can exhibit more exotic behavior than irrotational ones, including interior stagnation points and critical layers of closed streamlines~\cite{Ehrnstroem2012}. Stagnation points correspond to fluid particles that are stationary with respect to the wave, and for irrotational flows this can only occur at a sharp crest~\cite{Varvaruca2006}.

    Irrotational waves are mathematically simpler to work with than rotational ones, due to the existence of the velocity potential. The velocity potential is the harmonic conjugate of the stream function, thus enabling the use of tools such as complex analysis, which are typically not available with nonzero vorticity. The survey \cite{Toland1996} treats the theory of Stokes waves---an important class of irrotational waves---and the results on the so-called Stokes conjecture for such waves. This conjecture was not fully settled until the appearance of the paper \cite{Plotnikov2004}.
    
    Although rotational waves were considered intractable for mathematical analysis, they have long been important in more applied fields because rotational waves are not uncommon in nature: There are many physical effects that can induce rotation in waves, such as wind and thermal or salinity gradients \cite{Mei1984}, and rotational waves are also important in wave-current interactions \cite{Thomas1997}.

    The first, and still the only known, explicit example of a nontrivial traveling gravity water wave solution to the Euler equations was given in \cite{Gerstner1809} (see also \cite{Constantin2011c} for a more modern treatment) and is rotational; a fact which was only later pointed out by Stokes. Much later came the first existence result for small-amplitude waves with general vorticity distributions \cite{Dubreil-Jacotin1934}. It was not, however, before the pioneering article~\cite{Constantin2004} that large-amplitude waves were constructed, using an extension of the global bifurcation theory of Rabinowitz \cite{Rabinowitz1971,Healey1998}, leading to renewed interest in rotational waves. A corresponding result on deep water, where the lack of compactness is an obstacle, was established in \cite{Hur2006}.

    Due to the methods used, neither the waves in \cite{Dubreil-Jacotin1934} nor those in \cite{Constantin2004} exhibit stagnation. The first waves with a critical layer were constructed in \cite{Wahlen2009}, having constant vorticity. A different approach was used in \cite{Constantin2011b}, allowing for wave profiles with overhang (for which existence is still an open question, with some numerical evidence in the affirmative \cite{Vanden-Broeck1996}). The method of proof for the existence of nontrivial rotational waves is typically bifurcation from parallel flows with a prescribed vorticity distribution. Such parallel flows are described in great detail in~\cite{Kozlov2011}.

    Other authors have looked at waves with density stratification \cite{Escher2011,Henry2014,Walsh2014a}, waves with compactly supported vorticity \cite{Shatah2013,Varholm2016}, waves with discontinuous vorticity \cite{Constantin2011a}, and waves with a general vorticity distribution and stagnation \cite{Kozlov2014}. An upcoming result also establishes the existence of large-amplitude gravity water waves with a critical layer \cite{Constantin2014}. This was done in the presence of capillary effects in \cite{Matioc2014}, using an entirely different formulation.

    Of particular interest to us are \cite{Ehrnstroem2011,Ehrnstroem2015}, which cover small-amplitude waves with an affine vorticity distribution. This is the natural step up from the constant vorticity considered in \cite{Wahlen2009}, and the resulting waves can have an arbitrary number of critical layers \cite{Ehrnstroem2012}.

    In this paper, which builds upon \cite{Aasen2014}, we consider the same setting as in \cite{Ehrnstroem2011}. Small-amplitude solutions with an affine vorticity distribution are found by bifurcating from trivial solutions that depend naturally on three parameters. By using other choices for the bifurcation parameters in our argument, we obtain solution curves and sheets that, in general, do not coincide with those found in \cite{Ehrnstroem2011}. We are led to examine the asymptotic behavior of the bifurcating solutions; in particular for carefully chosen special cases. A complicating factor for our choice of bifurcation parameters is that they require an additional condition on the parameters. This condition can be interpreted as a nondegeneracy condition for the equation governing the dimension of the linearized problem.

    Another novel aspect of this work is a fairly complete description of the local geometry of the kernel of the operator appearing in the linearization. This is used to describe the geometry of the solution set near any trivial solution where the linear problem is one-dimensional, and for a class of trivial solutions with a two-dimensional linearized problem. We also show, by explicit construction, that the dimension of the linear problem can become arbitrary large for certain wavenumbers. This opens up the possibility for waves with arbitrarily many modes. Finally, we prove a regularity result, showing that the solutions we find are real analytic.

    The outline of the paper is as follows: In \Cref{sec:governing_equations} we formulate the problem and describe the setting in which we will work. Next, \Cref{sec:kernel} focuses on the kernel of the linearized operator. \Cref{1dBifSection} contains the bifurcation result for a one-dimensional kernel and gives the properties of the resulting bifurcation curves, while the final section, \Cref{2dBifSection}, covers two-dimensional bifurcation. Some useful derivatives are listed in \Cref{appendix}.

\section{The governing equations}
    \label{sec:governing_equations}
    We consider pure gravity waves. The fluid motion is assumed to be incompressible and two-dimensional, with the coordinate system oriented so that the $x$- and $y$-axes are horizontal and vertical, respectively. The fluid domain is bounded below by a flat bottom, and above by a free surface. Within this setting, our aim is to construct solutions of the steady water-wave problem; that is, to find a surface profile $\eta$ and a velocity field $(u,v)$, defined in the fluid domain
    \[
        \Omega_\eta \coloneqq \brac*{(x,y) \in \R^2:0<y<d+\eta(x)},
    \]
    where $d$ is the depth of the undisturbed fluid, satisfying the Euler equations
    \begin{subequations}
        \label{eq:euler}
        \begin{align}
            \label{masscons} u_x + v_y &= 0,  \\
            \label{Newt1} (u-c) u_x + v u_y   &= -p_x,  \\
            \label{Newt2} (u-c) v_x + v v_y     &= -p_y - g
        \end{align}
    \end{subequations}
    in $\Omega_\eta$. The surface profile is assumed to satisfy $\eta > -d$, so that the bottom is not exposed to air. In \eqref{Newt1} and \eqref{Newt2} the quantity $p$ is the pressure, $g$ is the gravitational acceleration, and $c$ is the constant velocity at which the wave travels.

    In addition to the equations in \eqref{eq:euler}, we impose the boundary conditions
    \begin{subequations}
        \label{eq:euler_bc}
        \begin{align}
            \label{tangFlowBC}  & & v &= 0 & &\text{at } y=0, & &  \\
            \label{KinBC}       & & v &= (u-c) \eta_x & & \multirow{2}[4]{*}[-2pt]{at $y = d+\eta(x)$.} & &  \\
            \label{PBC}         & & p &= 0 & &  & & 
        \end{align}
    \end{subequations}
    The first two boundary conditions are known as kinematic boundary conditions, and state that there is no flux through the surface or bottom. The dynamic boundary condition in \eqref{PBC} ensures that there is no jump in pressure across the free surface.
    
    We will be searching for periodic waves only, and so we introduce the wavenumber $\kappa>0$, and stipulate that all functions above be $2\pi/\kappa$-periodic in the horizontal variable.

    \subsection{Stream function formulation}
        We now reformulate the water wave problem \eqref{eq:euler}--\eqref{eq:euler_bc} in terms of a potential $\psi$, called the \textit{relative stream function}. From incompressibility \eqref{masscons}, together with $\Omega_\eta$ being simply connected, we know that there exists a function $\map{\psi}{\Omega_\eta}{\R}$ satisfying
        \[
            \psi_x = -v, \qquad \psi_y = u-c.
        \]
        This function is uniquely determined by $(u,v)$, up to a constant.
        
        The kinematic boundary condition \eqref{tangFlowBC} is equivalent to $\psi_x = 0$ at $y = 0$, and so $\psi$ is constant on the bottom. Similarly, we can use \eqref{KinBC} to deduce that $\psi$ is constant also on the surface. Next, \eqref{Newt1} and \eqref{Newt2} can be used to show that
        \[
            \{\psi,\Delta \psi\} = 0,
        \]
        where $\{\cdot,\cdot\}$ is the Poisson bracket defined by
        \[
            \{f,g\} \coloneqq f_y g_x - f_x g_y.
        \]
        Furthermore, by also using the boundary conditions in \eqref{eq:euler_bc}, one can infer that the surface Bernoulli equation
        \[
            \tfrac12 |\nabla \psi|^2 + g\eta = Q \qquad \text{on } y = d+\eta(x)
        \]
        holds for some $Q \in \R$.
        
        In terms of the stream function, the vorticity is given by
        \[
            \omega = -\Delta \psi,
        \]
        which follows directly from its definition in \eqref{vort}. Observe also that $\psi$ is $2\pi/\kappa$-periodic in the horizontal variable. To see this, note that $(x,y) \mapsto \psi(x + 2\pi/\kappa,y)$ is also a stream function, taking the same constant values as $\psi$ on the boundary. By uniqueness, they must be identical.
        
        The motivation for introducing the stream function is that, for a prescribed vorticity, the preceding equations are in fact equivalent to the steady water-wave problem. A precise statement, taken from \cite{Ehrnstroem2012}, can be found in \Cref{SFForm} below. We will use a subscript $\kappa$ to denote $2\pi/\kappa$-periodicity in the horizontal variable.
        
        \begin{proposition}[Stream function \cite{Ehrnstroem2012}]  \label{SFForm}
            For $\eta \in C_\kappa^3(\R)$, $u,v \in C_\kappa^2(\overline \Omega_\eta)$ and a prescribed vorticity $\omega \in C_\kappa^1(\overline{\Omega}_\eta)$, the steady water-wave problem \eqref{eq:euler}--\eqref{eq:euler_bc} is equivalent to the stream function formulation
            \begin{subequations}
                \label{eq:euler_stream}
                \begin{align}
                    \Delta \psi &= -\omega                  && \multirow{2}[4]{*}[-2pt]{in $\Omega_\eta$,} \notag\\
                    \{ \psi,\Delta \psi \} &= 0             && \label{eq:stream_poisson}\\
                    \psi &= m_0                             && \text{at $y=0$}, \notag\\
                    \psi &= m_1                             && \multirow{2}[4]{*}[-2pt]{at $y = d+\eta(x)$,} \notag\\
                    \tfrac12 |\nabla \psi|^2 + g\eta &= Q	&& \notag
                \end{align}
            \end{subequations}
            for $\psi \in C_\kappa^3(\overline \Omega_\eta)$ and constants $m_0, m_1$ and $Q$.
        \end{proposition}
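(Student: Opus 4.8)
The plan is to establish the equivalence in two directions, treating the passage from \eqref{eq:euler}--\eqref{eq:euler_bc} to \eqref{eq:euler_stream} as the easy direction already sketched in the text, and spending the bulk of the effort on the converse. For the forward direction, I would recall that incompressibility \eqref{masscons} together with simple connectedness of $\Omega_\eta$ yields a relative stream function $\psi$ with $\psi_x = -v$, $\psi_y = u - c$, unique up to an additive constant, and that \eqref{tangFlowBC} and \eqref{KinBC} each say that a certain total derivative of $\psi$ along the respective boundary component vanishes, hence $\psi$ equals constants $m_0$ on $y=0$ and $m_1$ on $y=d+\eta$. The identity $\omega = -\Delta\psi$ is immediate from \eqref{vort}. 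To get $\{\psi,\Delta\psi\}=0$, I would differentiate: from \eqref{Newt1}--\eqref{Newt2} the vector field $(u-c,v) = (\psi_y,-\psi_x)$ and the pressure satisfy $(u-c)\nabla u + v\nabla(\cdot)$-type relations, and cross-differentiating \eqref{Newt1} in $y$ against \eqref{Newt2} in $x$ eliminates $p$ (using $p_{xy}=p_{yx}$, valid since $p \in C^2$ as $u,v \in C^2$ and $\eqref{Newt1}$ forces $p_x \in C^1$, etc.) and leaves exactly $\psi_y(\Delta\psi)_x - \psi_x(\Delta\psi)_y = 0$. The surface Bernoulli relation $\tfrac12|\nabla\psi|^2 + g\eta = Q$ comes from integrating the tangential component of Euler along the free surface using \eqref{PBC}; one checks the tangential derivative of $\tfrac12|\nabla\psi|^2 + g\eta$ along $y=d+\eta$ vanishes, so it is a constant $Q$ there.

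For the converse, suppose $\psi \in C_\kappa^3(\overline\Omega_\eta)$ and constants $m_0,m_1,Q$ solve \eqref{eq:euler_stream}. Define $u \coloneqq \psi_y + c$ and $v \coloneqq -\psi_x$; then \eqref{masscons} holds identically, $u,v \in C_\kappa^2(\overline\Omega_\eta)$, and $v_x - u_y = -\psi_{xx} - \psi_{yy} = -\Delta\psi = \omega$, so the prescribed vorticity is recovered. The boundary condition \eqref{tangFlowBC} follows because $\psi \equiv m_0$ on the (flat) bottom forces $\psi_x = 0$ there, i.e.\ $v=0$; similarly $\psi \equiv m_1$ on $y=d+\eta$ gives, upon differentiating the relation $\psi(x,d+\eta(x)) = m_1$ in $x$, that $\psi_x + \psi_y\eta_x = 0$ on the surface, which rearranges to $v = (u-c)\eta_x$, i.e.\ \eqref{KinBC}. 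The crux is to produce the pressure $p$: I would define $p$ by integrating the putative Euler relations. Concretely, the condition $\{\psi,\Delta\psi\}=0$ says that $\nabla(\Delta\psi)$ is parallel to $\nabla\psi$ where the latter is nonzero, which classically means $\Delta\psi$ is (locally) a function of $\psi$; more robustly, I would simply set $p$ so that $p_x \coloneqq -(u-c)u_x - vu_y$ and $p_y \coloneqq -(u-c)v_x - vv_y - g$ and verify this one-form is closed, so that $p$ exists (on the simply connected $\Omega_\eta$) and is determined up to a constant. Closedness, $\partial_y p_x = \partial_x p_y$, is exactly the curl-of-Euler computation from the forward direction run in reverse, and it reduces precisely to $\{\psi,\Delta\psi\}=0$. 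The additive constant in $p$ is then pinned by requiring $p=0$ somewhere on the surface, giving \eqref{PBC}.

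The main obstacle, and the step I would be most careful about, is showing that the single constant available in the definition of $p$ suffices to make $p$ vanish on the \emph{entire} free surface rather than just at one point; this is where the surface Bernoulli equation $\tfrac12|\nabla\psi|^2 + g\eta = Q$ earns its keep. Having constructed $p$ with $p \equiv 0$ at one surface point, I would compute the tangential derivative of $p$ along $y = d+\eta$: using $p_x = -(u-c)u_x - vu_y$, $p_y = -(u-c)v_x-vv_y-g$, and the kinematic identity $v = (u-c)\eta_x$ already established on the surface, a direct calculation shows $\tfrac{d}{dx}\big[p(x,d+\eta(x))\big] = -\tfrac{d}{dx}\big[\tfrac12|\nabla\psi|^2 + g\eta\big]$ along the surface (this is the same tangential-Euler bookkeeping as before), and the right-hand side is $0$ by the Bernoulli relation. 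Hence $p$ is constant on the connected curve $y=d+\eta$, and that constant is $0$ by the normalization. A secondary technical point worth a remark is regularity: $\eta \in C^3_\kappa$, $\psi \in C^3_\kappa(\overline\Omega_\eta)$ give $u,v \in C^2_\kappa(\overline\Omega_\eta)$ and, from the defining one-form, $p \in C^2_\kappa(\overline\Omega_\eta)$, matching the regularity class in the statement of \eqref{eq:euler}--\eqref{eq:euler_bc}; periodicity of $p$ follows from periodicity of $\psi$ and uniqueness up to a constant, exactly as argued for $\psi$ in the text. With these pieces assembled the two implications close the equivalence.
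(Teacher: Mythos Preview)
Your argument is correct and is the standard route to this equivalence. Note, however, that the paper does not actually prove \Cref{SFForm}: the proposition is quoted from \cite{Ehrnstroem2012}, and the text preceding it only sketches the forward direction (existence of $\psi$, constancy of $\psi$ on the boundary, the Poisson-bracket identity from cross-differentiating \eqref{Newt1}--\eqref{Newt2}, and the surface Bernoulli relation). Your proposal fills in precisely what the paper defers to the reference, including the converse direction via construction of $p$ from the closed one-form and the use of the Bernoulli condition to propagate $p=0$ along the entire free surface. One small sharpening worth making explicit in your write-up: the tangential derivative of $\tfrac12|\nabla\psi|^2 + p + gy$ along the surface equals $\bigl(v-(u-c)\eta_x\bigr)\omega$, which vanishes by the already-established kinematic condition \eqref{KinBC}; this is the cleanest way to see that $p$ is constant on $S$ once the Bernoulli relation is in hand.
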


    \subsection{The vorticity distribution}
        As long as the fluid velocity does not exceed the wave velocity, so there is no stagnation, the vorticity at a point only depends on the value of the stream function at that point. This dependency is described by what is known as the vorticity distribution.
        \begin{lemma}[Vorticity distribution \cite{Constantin2004}] \label{gammaExists}
            Suppose that $u<c$. Then there exists a function $\gamma$ such that $\omega = \gamma(\psi)$ in $\Omega_\eta$.
        \end{lemma}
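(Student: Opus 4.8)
The plan is to show that $\psi$ has no critical points in $\Omega_\eta$, so that its level sets are curves, and then to argue that $-\Delta\psi$ is constant along each such curve. First I would observe that $\psi_y = u - c < 0$ throughout $\overline{\Omega}_\eta$ by hypothesis, so $\nabla\psi$ never vanishes; in particular, for each value $s$ between $m_1$ and $m_0$, the level set $\{\psi = s\}$ is, by the implicit function theorem, locally a graph $y = h(x,s)$ of a $C^1$ function, and since $\psi_y$ is bounded away from $0$ and $\psi$ takes the boundary values $m_0$ on the bottom and $m_1$ on the surface, every level set $\{\psi = s\}$ for $s$ strictly between $m_0$ and $m_1$ is a single such graph stretching across the whole (periodic) strip. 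This lets me define a candidate $\gamma$ on the interval with endpoints $m_0, m_1$ by $\gamma(s) \coloneqq -\Delta\psi(x, h(x,s))$ \emph{provided} I can show the right-hand side is independent of $x$.

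Independence of $x$ is exactly where the relation $\{\psi, \Delta\psi\} = 0$ enters. Writing $\Delta\psi$ as a function of position and differentiating $\Delta\psi(x, h(x,s))$ with respect to $x$ at fixed $s$, the chain rule gives $(\Delta\psi)_x + (\Delta\psi)_y\, h_x$. Differentiating the identity $\psi(x, h(x,s)) = s$ in $x$ gives $\psi_x + \psi_y h_x = 0$, hence $h_x = -\psi_x/\psi_y$, which is legitimate since $\psi_y \neq 0$. Substituting,
\[
    \frac{\partial}{\partial x}\Bigl[\Delta\psi(x, h(x,s))\Bigr] = (\Delta\psi)_x - (\Delta\psi)_y\,\frac{\psi_x}{\psi_y} = \frac{1}{\psi_y}\bigl(\psi_y (\Delta\psi)_x - \psi_x (\Delta\psi)_y\bigr) = \frac{1}{\psi_y}\{\psi, \Delta\psi\} = 0.
\]
Thus $\Delta\psi$ is constant on each level curve of $\psi$, and the function $\gamma$ above is well defined. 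By construction $\omega = -\Delta\psi = \gamma(\psi)$ in $\Omega_\eta$, which is the claim. (One may extend $\gamma$ arbitrarily, e.g.\ by a constant, outside the range of $\psi$ if a function on all of $\R$ is desired.)

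I expect the main obstacle to be the global statement that each level set $\{\psi = s\}$ is a \emph{single} graph spanning the whole horizontal period, rather than a union of several disjoint arcs — in principle $\psi$ could oscillate in $y$. This is ruled out precisely by the sign condition $\psi_y = u - c < 0$, which forces $\psi$ to be strictly monotone in $y$ on every vertical line, so that for each fixed $x$ there is exactly one $y$ with $\psi(x,y) = s$; periodicity of $\psi$ in $x$ then makes $h(\cdot, s)$ a well-defined $2\pi/\kappa$-periodic function. Everything else is the routine chain-rule computation above, and the regularity of $\gamma$ (here merely that it is a function) follows from the regularity of $\psi$ assumed in \Cref{SFForm}. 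Apart from the monotonicity point, the argument is essentially a one-line consequence of $\{\psi,\Delta\psi\}=0$ meaning that $\Delta\psi$ is a first integral of the Hamiltonian flow of $\psi$.
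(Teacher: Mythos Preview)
The paper does not supply its own proof of this lemma; it is quoted from \cite{Constantin2004} and used as a black box. Your argument is correct and is essentially the standard proof: the condition $u<c$ gives $\psi_y<0$, so $(x,y)\mapsto(x,\psi)$ is a diffeomorphism of the fluid domain onto a strip, and the relation $\{\psi,\Delta\psi\}=0$ says precisely that $\Delta\psi$ is independent of $x$ in these new coordinates, i.e.\ a function of $\psi$ alone. Your explicit chain-rule computation and the monotonicity argument for the global graph structure of the level sets are both sound, and the regularity available from \Cref{SFForm} ($\psi\in C^3$) is more than enough for the implicit function theorem step.
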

        
        A notable consequence of the existence of a vorticity distribution is that \Cref{eq:stream_poisson} is trivially satisfied, because
        \[
            \{ \psi,\Delta \psi \} = \psi_y (-\gamma(\psi))_x - \psi_x (-\gamma(\psi))_y =0
        \]
        by the chain rule. Observe also that the condition in \Cref{gammaExists} is sufficient, but not necessary. By \emph{assuming} the existence of a vorticity distribution, we will still obtain solutions of the water-wave problem, even if $u < c$ is not satisfied. In fact, the solutions that we will find can exhibit stagnation and critical layers. The introduction of the vorticity distribution is standard for rotational waves, and was used already in \cite{Dubreil-Jacotin1934}.
        
        We shall consider the case where $\gamma$ is affine. By making a shift of $\psi$, it is sufficient to consider the case of linear $\gamma$. After scaling to unit depth and scaling away the gravitational acceleration, the stream function formulation \eqref{eq:euler_stream} reduces to
        \begin{subequations}
            \label{problem}
            \begin{align}
            \Delta \psi &= \alpha \psi				&& \text{in } \Omega_\eta,\label{ProbOmeEq} \\
            \tfrac12 |\nabla \psi|^2 + \eta &= Q	&& \text{on } S,\label{ProbSEq} \\
            \psi &= m_0								&& \text{on } B, \label{ProbBCond}\\
            \psi &= m_1								&& \text{on } S, \label{ProbSCond}
            \end{align}
        \end{subequations}
        where we have introduced the bottom $B \coloneqq \{(x,y): y=0 \}$ and the surface $S \coloneqq \{ (x,y): y = 1+\eta(x)\}$. The parameter $\alpha$ in \eqref{ProbOmeEq} controls the vorticity, and will be assumed to be \emph{negative}. For positive $\alpha$, one-dimensional---but not higher-dimensional---bifurcation is possible. More discussion on this can be found in \cite{Ehrnstroem2011}.
        
        Observe now that the system \eqref{problem} makes sense also in less regular function spaces than those specified in \Cref{SFForm}, and we will therefore allow for less regular (but still classical) solutions. More precisely, we will search for solutions 
        \[
            \eta \in C^{2,\beta}_{\kappa,\textnormal{e}}(\R) \qquad \text{and} \qquad \psi \in C^{2,\beta}_{\kappa,\textnormal{e}}(\overline \Omega_\eta),
        \]
        where $\beta \in (0,1)$, and the subscript e signifies the subspace of functions which are even in the horizontal variable. The motivation for working in these Hölder spaces is that \Cref{mcLisFred} then holds.
        
        \begin{remark}[Regularity]
            \label{rem:optimal_regularity}
            Due to \Cref{ProbOmeEq} and elliptic regularity for the differential operator $\alpha - \Delta$, the stream function $\psi$ is analytic in $\Omega_\eta$. In fact, we show in \Cref{thm:optimal_regularity} that this is true even up to the boundary.
        \end{remark}

    \subsection{Trivial solutions and flattening}
        The solutions of \eqref{problem} that we shall construct will be small perturbations of steady flows that are parallel to the bottom. These parallel flows are the trivial solutions of \eqref{problem}, in the sense that $\eta = 0$ and the stream function $\psi$ only depends on $y$. By integrating \Cref{ProbOmeEq}, we arrive at trivial solutions of the form
        \begin{equation}
            \label{Laminar}
            \psi_0(y, \Lambda) \coloneqq\mu \cos \paren[\big]{|\alpha|^{1/2} (y-1)+\lambda}, \qquad \Lambda = (\mu,\alpha,\lambda)\in \R^3,
        \end{equation}
        with corresponding $Q(\Lambda)$, $m_0(\Lambda)$ and $m_1(\Lambda)$ determined from \eqref{ProbSEq}--\eqref{ProbSCond} as
        \begin{equation}
            \label{eq:trivial_solution_constants}
            Q(\Lambda) = \frac{\mu^2 |\alpha| \sin^2(\lambda)}{2},\quad
            \begin{aligned}
                m_0(\Lambda) &= \mu \cos(\lambda-|\alpha|^{1/2}),\\
                m_1(\Lambda) &= \mu \cos(\lambda).
            \end{aligned}
        \end{equation}
        
        Our goal is to find nontrivial solutions of \eqref{problem} for certain values of $\Lambda$, corresponding to these particular values of $Q$, $m_0$ and $m_1$. For technical reasons which we will elucidate later in \Cref{rem:fredholm}, it is assumed that
        \begin{equation}
            \label{psiAssumption}
            \psi_{0y}(1) = -\mu |\alpha|^{1/2} \sin(\lambda) \neq 0.
        \end{equation}
        As in \eqref{psiAssumption}, we will often omit the dependence on $\Lambda$ in our notation.
        
        The main difficulty with the system \eqref{problem} is that it is a free-boundary problem, which entails that the domain is a priori unknown. There are several ways of fixing the domain. Here, we will use the ``naive'' flattening transform
        \[
            G:(x,y) \mapsto \paren*{ x, \frac{y}{1+\eta(x)} },
        \]
        giving a bijection from the sets $\Omega_\eta$, $B$ and $S$ onto
        \[
            \hat{\Omega} = \{(x,s): s \in [0,1] \}, \quad \hat B \coloneqq \{(x,s): s=0 \}\quad \text{and}\quad  \hat S \coloneqq \{(x,s):s=1\},
        \]
        respectively. Using that $\eta \in C^{2,\beta}_{\kappa,\textnormal{e}}(\R)$, we find that the map $G$ is a $C^{2,\beta}$-diffeomorphism, with inverse given by
        \[
            G^{-1}(x,s) = \paren[\big]{x,(1+\eta(x))s}.
        \]
        
        If we define $\hat{\psi}$ on $\hat{\Omega}$ by $\hat{\psi} \coloneqq \psi \circ G^{-1}$, then \eqref{ProbSEq} and \eqref{ProbOmeEq} become 
        \begin{equation}
            \label{mcEDef}
            \begin{aligned}
                \paren[\bigg]{\partial_x - \frac{s\eta_x}{1+\eta}\partial_s}^2\hat{\psi} + \frac{\hat{\psi}_{ss}}{(1+\eta)^2} &=\alpha\hat{\psi}  &\text{in $\hat{\Omega}$},\\
                \frac{(1+\eta_x^2)\hat{\psi}_s^2}{2(1+\eta)} + \eta &=Q  &\text{on $\hat{S}$},
            \end{aligned}
        \end{equation}
        in the new flattened variables, for which we have the following:
        \begin{lemma}[Equivalence \cite{Ehrnstroem2011}] \label{mcELem}
            For functions $\eta \in C^{2,\beta}_{\kappa,\textnormal{e}}(\R)$ and $\psi \in C_{\kappa,\textnormal{e}}^{2,\beta}(\overline{\Omega}_\eta)$, the stream function formulation \eqref{problem} is equivalent to the transformed problem in \eqref{mcEDef} for  $\eta \in C^{2,\beta}_{\kappa,\textnormal{e}}(\R)$ and
            \[
                \brac[\big]{\hat{\psi} \in C^{2,\beta}_{\kappa,\textnormal{e}}(\overline{\hat{\Omega}}):\hat{\psi}|_{s=0} = m_0,\, \hat{\psi}|_{s=1}=m_1}.
            \]
            Moreover, in this setting, a pair $(\eta,\hat{\psi}) = (0,\hat{\psi}(s))$ solves \eqref{mcEDef} if and only if $\hat \psi = \psi_0$.
        \end{lemma}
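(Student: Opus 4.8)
To prove \Cref{mcELem} the plan is to show that composition with the flattening diffeomorphism $G$ is a bijection between the two solution classes carrying each equation of \eqref{problem} onto the corresponding equation of \eqref{mcEDef}, and then to identify the trivial solutions by direct integration. For the function-space part: since $\eta \in C^{2,\beta}_{\kappa,\textnormal{e}}(\R)$, both $G$ and $G^{-1}(x,s) = (x,(1+\eta(x))s)$ are $C^{2,\beta}$-diffeomorphisms, as already noted, so $\psi \mapsto \hat{\psi} \coloneqq \psi \circ G^{-1}$ is a linear bijection from $C^{2,\beta}(\overline{\Omega}_\eta)$ onto $C^{2,\beta}(\overline{\hat{\Omega}})$ with inverse $\hat{\psi} \mapsto \hat{\psi} \circ G$; and because $G$ commutes with the translation $x \mapsto x + 2\pi/\kappa$ ($\eta$ being periodic) and with the reflection $x \mapsto -x$ ($\eta$ being even), this bijection restricts to one between the subspaces $C^{2,\beta}_{\kappa,\textnormal{e}}$. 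Finally $G$ carries $B$, $S$ onto $\hat{B}$, $\hat{S}$, so $\psi$ takes the constant values $m_0$ on $B$ and $m_1$ on $S$ exactly when $\hat{\psi}$ takes those values on $\hat{B}$ and $\hat{S}$, matching \eqref{ProbBCond}, \eqref{ProbSCond} with the constraints defining the target set.

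The heart of the argument is the chain-rule computation. Writing $s = y/(1+\eta(x))$ along $G$, differentiating $\psi = \hat{\psi} \circ G$ gives $\psi_x = \big(\partial_x - \tfrac{s\eta_x}{1+\eta}\partial_s\big)\hat{\psi}$ and $\psi_y = \tfrac{1}{1+\eta}\partial_s\hat{\psi}$, evaluated at $G(x,y)$; iterating these substitutions produces
\[
    \Delta\psi = \Big[\big(\partial_x - \tfrac{s\eta_x}{1+\eta}\partial_s\big)^2 + \tfrac{\partial_s^2}{(1+\eta)^2}\Big]\hat{\psi} ,
\]
again evaluated along $G$, so that \eqref{ProbOmeEq} is equivalent to the first equation of \eqref{mcEDef}. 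On $\hat{S} = \{s = 1\}$ the function $\hat{\psi}$ equals the constant $m_1$, hence $\partial_x\hat{\psi}$ vanishes there; feeding the expressions for $\psi_x$ and $\psi_y$ into \eqref{ProbSEq} then yields the second equation of \eqref{mcEDef}. Performing the same computation with $G$ in place of $G^{-1}$ gives the reverse implication, establishing the equivalence of \eqref{problem} and \eqref{mcEDef} in the stated classes.

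For the last assertion, put $\eta = 0$ and look for $\hat{\psi}$ depending on $s$ only. Then $\eta_x = 0$ and $1 + \eta \equiv 1$, so \eqref{mcEDef} reduces to the ordinary differential equation $\hat{\psi}'' = \alpha\hat{\psi}$ on $[0,1]$ together with the single scalar relation $\tfrac12\hat{\psi}'(1)^2 = Q$ coming from the surface equation at $s = 1$. Since $\alpha < 0$, the general solution of the ODE is $\hat{\psi}(s) = \mu\cos(|\alpha|^{1/2}(s-1) + \lambda)$ for some $\mu,\lambda \in \R$, which is precisely $\psi_0(\cdot,\Lambda)$ of \eqref{Laminar}; the scalar relation together with the boundary values $\hat{\psi}(0)$ and $\hat{\psi}(1)$ then force the constants to be those in \eqref{eq:trivial_solution_constants}. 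Conversely, every $\psi_0(\cdot,\Lambda)$ solves this reduced system, so $(0,\hat{\psi}(s))$ solves \eqref{mcEDef} if and only if $\hat{\psi} = \psi_0$.

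I do not expect a genuine obstacle here, since once the equivalence is reduced to the chain rule everything is mechanical. The points that need care are verifying that composition with $G$ truly preserves the $C^{2,\beta}$, even, and $2\pi/\kappa$-periodic structure — which is exactly where the hypothesis $\eta \in C^{2,\beta}_{\kappa,\textnormal{e}}$ enters — and keeping the nested second-derivative computation for $\Delta\psi$ straight, in particular using $\hat{\psi}_x = 0$ on $\hat{S}$ when simplifying the surface condition.
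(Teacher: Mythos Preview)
The paper does not supply its own proof of this lemma; it is stated with a citation to \cite{Ehrnstroem2011} and left unproved. There is therefore nothing in the present paper to compare your argument against.

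That said, your proof is correct and follows the natural route one would expect: the first part is a straightforward chain-rule computation showing that composition with the $C^{2,\beta}$-diffeomorphism $G$ carries each equation of \eqref{problem} onto the corresponding equation of \eqref{mcEDef}, together with the observation that $G$ respects periodicity and evenness because $\eta$ does. Your use of $\hat{\psi}_x|_{s=1}=0$ (from the constant boundary value $m_1$) to simplify the transformed Bernoulli condition is the right device. For the second part, reducing to the ODE $\hat{\psi}''=\alpha\hat{\psi}$ and writing the general solution in the amplitude--phase form \eqref{Laminar} immediately identifies the trivial solutions with the family $\psi_0(\cdot,\Lambda)$. One small point you might tighten: the ``if and only if'' in the lemma is really the statement that the trivial solutions are \emph{exactly} the family $\{\psi_0(\cdot,\Lambda)\}_\Lambda$, with the constants $Q,m_0,m_1$ then determined by \eqref{eq:trivial_solution_constants}; your phrasing ``force the constants to be those in \eqref{eq:trivial_solution_constants}'' reads as if $Q,m_0,m_1$ were prescribed in advance and uniquely determine $(\mu,\lambda)$, which is not quite the intended direction and can fail at isolated values of $\alpha$ where the linear BVP is degenerate.
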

        
        With the trivial solutions found and the flattening transform introduced, we now elaborate on \Cref{rem:optimal_regularity}. Any solution which is sufficiently close to a trivial solution is in fact analytic, as long as \eqref{psiAssumption} holds. The precise statement can be found in \Cref{thm:optimal_regularity} below.
        
        \begin{theorem}[Regularity]
            \label{thm:optimal_regularity}
            Suppose that a solution $(\eta,\psi)$ of the problem \eqref{problem} in $C^1(\R) \times C^2(\R)$ is such that the normal derivative $\partial_n\psi$ of the stream function vanishes at no point on the surface. Then we have the following:
            \begin{enumerate}[(i)]
                \item The surface profile $\eta$ is analytic.
                \item The stream function $\psi$ extends to an analytic function on an open set containing $\overline{\Omega}_\eta$.
            \end{enumerate}
            The assumption on $\partial_n \psi$ holds when $\Lambda$ satisfies \eqref{psiAssumption} and $\hat{\psi}$ is sufficiently close to $\psi_0(\cdot,\Lambda)$ in $C^2(\overline{\hat{\Omega}})$.
        \end{theorem}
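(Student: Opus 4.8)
\emph{Plan.} The interior half of the statement is immediate and is already recorded in \Cref{rem:optimal_regularity}: $\psi$ solves the linear, constant-coefficient equation $(\Delta-\alpha)\psi=0$, whose solutions are real-analytic by analytic hypoellipticity, so $\psi$ is analytic in the open set $\Omega_\eta$. The real content is the behaviour up to the free surface $S$, and the plan is to straighten $S$ by a \emph{local partial hodograph transform}, the point being that the new ``vertical'' coordinate is a solution of the equation and is therefore already as regular as we can bootstrap.

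Concretely, I would first note that differentiating $\psi\equiv m_1$ along $S$ gives $\psi_x=-\eta_x\psi_y$ there, whence $\partial_n\psi=\psi_y\sqrt{1+\eta_x^2}$ on $S$; thus the hypothesis is equivalent to $\psi_y\neq 0$ on $S$. Fixing $P_0\in S$, continuity gives $\psi_y\neq 0$ near $P_0$, so $y\mapsto\psi(x,y)$ is strictly monotone there and one may define $h$ by $\psi(x,h(x,q))=q$ on a one-sided neighbourhood of a point of the line $\{q=m_1\}$ in the $(x,q)$-plane. A direct computation turns \eqref{ProbOmeEq} into the quasilinear equation
\[
    (1+h_x^2)h_{qq}-2h_xh_qh_{xq}+h_q^2h_{xx}+\alpha q\,h_q^3=0,
\]
whose second-order coefficient matrix has determinant $h_q^2=\psi_y^{-2}>0$, so the equation is uniformly elliptic with coefficients polynomial, hence analytic, in $(h_x,h_q)$; and, using $|\nabla\psi|^2=(1+h_x^2)/h_q^2$ together with $\eta(x)=h(x,m_1)-1$, the surface condition \eqref{ProbSEq} becomes the first-order boundary condition
\[
    \frac{1+h_x^2}{2h_q^2}+h-(1+Q)=0\qquad\text{on }\{q=m_1\},
\]
which is analytic and genuinely oblique, since its derivative in $h_q$ equals $-(1+h_x^2)/h_q^3\neq 0$.

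From here the argument is classical. Starting from $h\in C^2$, obtained by inverting the $C^2$ function $\psi$ near $P_0$, a Schauder iteration up to the boundary for this quasilinear oblique problem lifts $h$ to $C^\infty$ on a one-sided neighbourhood of $\{q=m_1\}$, after which the analyticity theory for nonlinear elliptic boundary value problems with analytic data satisfying the complementing condition (Morrey; Kinderlehrer and Nirenberg) upgrades this to analyticity of $h$ up to $\{q=m_1\}$. Since $h_q=1/\psi_y\neq 0$, the map $(x,q)\mapsto(x,h(x,q))$ is an analytic diffeomorphism, so $\psi$ is analytic up to $S$ near $P_0$ and $\eta=h(\cdot,m_1)-1$ is analytic near the base point; covering one period of $S$ by finitely many such neighbourhoods gives (i), and (ii) then follows by extending $\psi$ across the now-analytic curve $S$ using Cauchy--Kovalevskaya with the analytic Cauchy data $\psi=m_1$, $\partial_n\psi$, and patching with the interior analyticity. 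Finally, for the trivial solution the surface is flat, so $\partial_n\psi_0=\psi_{0y}(1)=-\mu|\alpha|^{1/2}\sin\lambda$, which is nonzero precisely under \eqref{psiAssumption}; since $\partial_n\psi|_S$ depends continuously on $(\eta,\hat\psi)$, with $\eta$ correspondingly small, the condition $\partial_n\psi\neq 0$ persists for $\hat\psi$ near $\psi_0$ in $C^2(\overline{\hat\Omega})$.

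The hard part is to confirm that the hodograph image really is a \emph{regular} elliptic boundary value problem---uniform ellipticity, and above all the complementing (Lopatinski--Shapiro) condition for the transformed Bernoulli operator relative to the equation---and to get the Schauder bootstrap started from the modest assumed regularity $C^1(\R)\times C^2(\R)$. It is exactly the nonvanishing of $\partial_n\psi$ that makes the change of variables admissible and the boundary condition oblique, so that hypothesis is the linchpin of the whole argument.
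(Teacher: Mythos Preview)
Your argument is correct and reaches the same conclusion, but by a different and more explicit route than the paper's. For part~(i) the paper does not carry out a hodograph change of variables itself; instead it invokes the free-boundary regularity theorem of Kinderlehrer--Nirenberg--Spruck \cite{Kinderlehrer1978} as a black box, setting $u=\psi-m_1$ in $\Omega_\eta$ and $u=0$ above $S$, with $L=\alpha-\Delta$ and the surface relation $\tfrac12(\partial_n\psi)^2+y-1-Q=0$ playing the role of the free-boundary condition; the hypothesis $\partial_n\psi\neq0$ is exactly the Shapiro--Lopatinski\u{\i} condition required there. For part~(ii) the paper then applies Morrey--Nirenberg \cite{Morrey1957} directly, once the boundary is known to be analytic, rather than your Cauchy--Kovalevskaya extension. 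Your partial-hodograph approach is essentially what underlies the Kinderlehrer--Nirenberg--Spruck machinery, so the two routes are closely related; yours has the merit of being self-contained and of making the complementing condition visible as the obliqueness $\partial_{h_q}(\text{boundary operator})=-(1+h_x^2)/h_q^3\neq0$, while the paper's is considerably shorter. One small point on the last assertion: the paper records the exact identity $\partial_n\psi=\sqrt{1+(\eta')^2}\,(1+\eta)^{-1}\hat\psi_s(\cdot,1)$, so that $\partial_n\psi\neq0$ is equivalent to $\hat\psi_s(\cdot,1)\neq0$ and no smallness of $\eta$ is needed---your continuity argument, as phrased, would appear to require $\eta$ small as well.
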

        \begin{proof}
            We start by showing that $\eta$ is analytic. For this, we will use the approach taken in \cite{Constantin2011}, which is to apply \cite[Theorem 3.2]{Kinderlehrer1978}. In \cite{Constantin2011} this was done under the assumption of no stagnation, but it is sufficient to assume that stagnation does not occur on the surface. This corresponds to the Shapiro--Lopatinski\u{\i} condition for a certain elliptic system.
            
            Let $\Omega_\eta^+$ be the component of $\R^2 \setminus S$ that does not contain $\Omega_\eta$. Proceed to define the function $\map{u}{\Omega_\eta \cup S \cup \Omega_\eta^+}{\R}$ by
            \[
                u(x,y) \coloneqq \begin{cases}
                    0 & (x,y) \in S \cup \Omega_\eta^+,\\
                    \psi(x,y) - m_1  & (x,y) \in \Omega_\eta \cup S,
                \end{cases}
            \]
            and the differential operator $L$ by $L\coloneqq\alpha - \Delta$. Observe that \Cref{ProbSEq,ProbSCond} imply that
            \[
                f(y,\partial_n \psi) \coloneqq \frac{1}{2} (\partial_n \psi)^2 + y - 1 - Q = 0
            \]
            on $S$. All the assumptions of \cite[Theorem 3.2]{Kinderlehrer1978} are now satisfied, with $G \coloneqq L$ and $F(u) \coloneqq Lu + \alpha m_1$ (see the remark immediately after the theorem). We conclude that $\eta$ is analytic.
            
            Note now that the differential operator $L$ is strongly elliptic in the sense of \cite[Equation (1.7)]{Morrey1957}. Equipped with the fact that $\eta$ is analytic, we can use \cite[Theorem A]{Morrey1957} to conclude that $\psi$ extends to an analytic function on an open set containing $\overline{\Omega}_\eta$.
            
            The final part of the theorem follows because
            \[
                \partial_n \psi = \sqrt{1+(\eta')^2}\psi_y(\cdot,\eta)=\frac{\sqrt{1+(\eta')^2}}{1+\eta} \hat{\psi}_s(\cdot,1),
            \]
            where $\hat{\psi}_s(\cdot,1)$ is bounded away from $0$ as long as $\hat{\psi}$ is sufficiently close to $\psi_0$ in $C^2(\overline{\hat{\Omega}})$, due to the assumption that \Cref{psiAssumption} holds.
        \end{proof}
        \begin{remark}
            \Cref{thm:optimal_regularity} is a local result at heart. It is clear from the proof that if $\partial_n \psi(x_0,\eta(x_0))\neq 0$, then $\eta$ is analytic in a neighborhood of $x_0$. This, in turn, implies that $\psi$ extends analytically across the surface near the point $(x_0,\eta(x_0))$.
        \end{remark}
        \begin{remark}
            Recall that the stream function $\psi$ is analytic on $\Omega_\eta$, regardless of whether the condition on $\partial_n \psi$ on the surface in \Cref{thm:optimal_regularity} is satisfied. It is worth noting that this implies, through the implicit function theorem, that the streamlines are analytic curves away from stagnation points.
        \end{remark}

    \subsection{The linearized problem} \label{funcAnSet}
        In order to linearize \Cref{mcEDef} around a trivial solution $\psi_0$, we write $\hat{\psi}=\psi_0+\hat{\phi}$, and introduce the spaces
        \[
            X = X_1 \times X_2 \coloneqq C^{2,\beta}_{\kappa,\textnormal{e}}(\R) \times \brac[\big]{\hat{\phi} \in C^{2,\beta}_{\kappa,\textnormal{e}}(\overline{\hat{\Omega}}):\hat{\phi}|_{s=0} = \hat{\phi}|_{s=1}=0 }
        \]
        and
        \[
            Y=Y_1 \times Y_2 \coloneqq C^{1,\beta}_{\kappa,\textnormal{e}}(\R) \times C^{\beta}_{\kappa,\textnormal{e}}(\overline{\hat{\Omega}}).
        \]
        We will write $w=(\eta,\hat{\phi})$ for elements of $X$. To capture our assumptions, it is convenient to define the sets
        \[
            \mcO\coloneqq \{ w \in X:\min \eta >-1 \},
        \]
        and, to enforce that $\alpha<0$ and \eqref{psiAssumption} hold,
        \[
            \mcU \coloneqq \brac*{ (\mu,\alpha,\lambda) \in \R^3:\mu \neq 0,\, \alpha < 0,\, 0 < \lambda < \pi}.
        \]
        
        We now define the map $\map{\mcF = (\mcF_1,\mcF_2)}{\mcO \times \mcU}{Y}$ by
        \begin{subequations}
            \label{mcFDef}
            \begin{align}
                \mcF_1(w,\Lambda) &\coloneqq \frac{(1+\eta_x^2)(\psi_{0s} + \hat{\phi}_s)^2}{2(1+\eta)^2} + \eta - Q(\Lambda), \label{mcF1Def}\\
                \mcF_2(w,\Lambda) &\coloneqq \paren[\bigg]{\partial_x - \frac{s\eta_x}{1+\eta}\partial_s}^2(\psi_0 + \hat{\phi}) +\frac{\psi_{0ss}+\hat{\phi}_{ss}}{(1+\eta)^2} - \alpha(\psi_{0}+\hat{\phi}), \notag
            \end{align}
        \end{subequations}
        where $\psi_0$ is as in \Cref{Laminar} and $Q(\Lambda)$ is given in \eqref{eq:trivial_solution_constants}. In \eqref{mcF1Def}, it is understood that the functions $\psi_{0s}$ and $\hat{\phi}_s$ are evaluated at $s=1$. It is clear that $\mcF$ is well defined and smooth as a map $\mcO \times \mcU \to Y$. We wish to solve the equation
        \begin{equation}
            \label{eq:F_problem}
            \mcF(w,\Lambda) = 0.
        \end{equation}
        
        We obtain the linearized problem by taking the partial derivative of $\mcF$ with respect to $w$ at the point $(0,\Lambda)$. This yields
        \begin{subequations}
            \begin{align}
                D_w \mcF_1 (0,\Lambda) w &= \psi_{0s} \hat{\phi}_s-\psi_{0s}^2 \eta + \eta, \label{DF1w}\\
                D_w \mcF_2 (0,\Lambda) w &= (\Delta-\alpha) \hat{\phi}-s \psi_{0s} \eta_{xx} -2 \psi_{0ss} \eta \notag,
            \end{align}
        \end{subequations}
        where it again is understood that the functions are evaluated at $s=1$ in \eqref{DF1w}. By introducing an isomorphism, in \Cref{mcTProp} below, we can transform $D_w \mcF$ into a simpler elliptic operator. For this purpose, define
        \[
            \tilde{X}_2 \coloneqq \brac[\big]{ \phi \in C^{2,\beta}_{\kappa,\textnormal{e}}(\overline{\hat{\Omega}}): \phi|_{s=0} = 0 }, \quad \tilde{X} \coloneqq X_1 \times \tilde{X}_2,
        \]
        where we have the inclusion $X \subset \tilde X \subset Y$. We will typically use the letter $\phi$ for elements of $\tilde X_2$.
        \begin{proposition}[The $\mathcal{T}$ isomorphism \cite{Ehrnstroem2011}] \label{mcTProp}
            The bounded linear operator $\mathcal{T}(\Lambda):\tilde{X}_2 \to X$ defined by
            \[
                \mathcal{T}(\Lambda) \phi =(\eta_{\phi},\hat{\phi}) \coloneqq \paren*{ -\frac{\phi|_{s=1}}{\psi_{0s}(1)}, \phi-\frac{s \psi_{0s}}{\psi_{0s}(1)} \phi|_{s=1} }
            \]
            is an isomorphism of Banach spaces, and the operator
            \[
                \mcL (\Lambda) = (\mcL_1(\Lambda),\mcL_2(\Lambda)) \coloneqq D_w \mcF(0,\Lambda)\mcT(\Lambda):\tilde{X}_2 \to Y
            \]
            satisfies
            \begin{equation}
                \label{mcLExp}
                \mcL(\Lambda) \phi = \paren*{\brak*{ \psi_{0s} \phi_s- \paren*{\psi_{0ss}+\frac{1}{\psi_{0s}}} \phi }_{s=1}, (\Delta - \alpha) \phi }.
            \end{equation}
        \end{proposition}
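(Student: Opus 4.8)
The plan is to establish the two claims in turn: that $\mcT(\Lambda)$ is an isomorphism of Banach spaces, and then the explicit formula \eqref{mcLExp} for $\mcL(\Lambda) = D_w\mcF(0,\Lambda)\mcT(\Lambda)$. For the first claim, observe that the division by $\psi_{0s}(1) = \psi_{0y}(1)$ is legitimate precisely because of \eqref{psiAssumption}. Given $\phi \in \tilde{X}_2$, the component $\eta_\phi = -\phi|_{s=1}/\psi_{0s}(1)$ is, up to a constant, the trace of $\phi$ on $\hat{S}$, hence lies in $C^{2,\beta}_{\kappa,\textnormal{e}}(\R)$, while $\hat{\phi} = \phi - (s\psi_{0s}/\psi_{0s}(1))\phi|_{s=1}$ is $\phi$ minus the product of the smooth function $s \mapsto s\psi_{0s}(s)/\psi_{0s}(1)$ with that trace, so it lies in $C^{2,\beta}_{\kappa,\textnormal{e}}(\overline{\hat{\Omega}})$. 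Evaluating $\hat{\phi}$ at $s=0$ (using $\phi|_{s=0} = 0$) and at $s=1$ yields $\hat{\phi}|_{s=0} = \hat{\phi}|_{s=1} = 0$, so $(\eta_\phi,\hat{\phi}) \in X$; boundedness of $\mcT(\Lambda)$ is then immediate from continuity of the trace.

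To see that $\mcT(\Lambda)$ is an isomorphism I would simply write down its inverse. If $\mcT(\Lambda)\phi = (\eta,\hat{\phi})$, then reading off the first component forces $\phi|_{s=1} = -\psi_{0s}(1)\eta$, and inserting this into the second component gives $\phi = \hat{\phi} - s\psi_{0s}\eta$. Conversely, for any $(\eta,\hat{\phi}) \in X$ the right-hand side $\hat{\phi} - s\psi_{0s}\eta$ lies in $\tilde{X}_2$, since it is $C^{2,\beta}_{\kappa,\textnormal{e}}$ and vanishes at $s=0$ (as $\hat{\phi}$ does), and a one-line check shows $\mcT(\Lambda)(\hat{\phi} - s\psi_{0s}\eta) = (\eta,\hat{\phi})$. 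The assignment $(\eta,\hat{\phi}) \mapsto \hat{\phi} - s\psi_{0s}\eta$ is manifestly a bounded linear map, so it is the bounded inverse of $\mcT(\Lambda)$.

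For the formula \eqref{mcLExp} the strategy is direct substitution of the definition of $\mcT(\Lambda)$ into the linearizations $D_w\mcF_1(0,\Lambda)$ in \eqref{DF1w} and $D_w\mcF_2(0,\Lambda)$ in the companion line displayed alongside it. Writing $\theta \coloneqq \phi|_{s=1}$ and $p(s) \coloneqq s\psi_{0s}(s)/\psi_{0s}(1)$, so that $\eta_\phi = -\theta/\psi_{0s}(1)$ and $\hat{\phi} = \phi - p\theta$, the first component becomes $[\psi_{0s}(\phi_s - p'\theta) - \psi_{0s}^2\eta_\phi + \eta_\phi]_{s=1}$; using $p'(1) = 1 + \psi_{0ss}(1)/\psi_{0s}(1)$, the $\psi_{0s}(1)$-contributions to the coefficient of $\theta$ cancel and one is left with $[\psi_{0s}\phi_s - (\psi_{0ss} + 1/\psi_{0s})\phi]_{s=1}$, which is $\mcL_1(\Lambda)\phi$. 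For the second component, $\Delta(p(s)\theta(x)) = p\theta'' + p''\theta$ gives $(\Delta - \alpha)\hat{\phi} = (\Delta-\alpha)\phi - p\theta'' - p''\theta + \alpha p\theta$; the term $-s\psi_{0s}(\eta_\phi)_{xx}$ equals $p\theta''$ and cancels $-p\theta''$, and the residual multiple of $\theta$, namely $(-p'' + \alpha p + 2\psi_{0ss}/\psi_{0s}(1))\theta$, vanishes because the trivial solution obeys $\psi_{0ss} = \alpha\psi_0$ (hence $\psi_{0sss} = \alpha\psi_{0s}$), which upon differentiating $p$ twice yields $p'' = 2\psi_{0ss}/\psi_{0s}(1) + \alpha p$. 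Thus $\mcL_2(\Lambda)\phi = (\Delta - \alpha)\phi$, which completes \eqref{mcLExp}.

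I do not expect a genuine obstacle here: the whole argument is linear algebra together with careful bookkeeping of derivatives. The one computation that really carries the result is the vanishing of the coefficient of $\theta$ in $\mcL_2$, which is exactly where the linear vorticity relation $\psi_{0ss} = \alpha\psi_0$ — and the sign convention $\alpha = -|\alpha| < 0$ inherited from \eqref{Laminar} — enters; keeping the chain-rule derivatives of $p$ straight and tracking the cancellations in $\mcL_1$ is the only place where care is needed.
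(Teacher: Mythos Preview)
Your proof is correct and is precisely the direct computation the paper alludes to; the paper itself only says that the isomorphism claim is ``almost immediate'' and that \eqref{mcLExp} ``follows by direct computation,'' so you have simply spelled out those details. One minor remark: the sign of $\alpha$ plays no role in the cancellation for $\mcL_2$, only the relation $\psi_{0ss}=\alpha\psi_0$ (and hence $\psi_{0sss}=\alpha\psi_{0s}$) is needed, so your closing comment about the sign convention is unnecessary.
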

        
        \begin{proof}
            That $\mcT$ is well defined and an isomorphism is almost immediate. The expression for $\mcL(\Lambda)$ in \eqref{mcLExp} follows by direct computation.
        \end{proof}

\section{The kernel and dimensional reduction}
    \label{sec:kernel}
    
    Introduce the complex parameter
    \[
        \theta_n = \theta(n,\alpha) \coloneqq \sqrt{\alpha+ n^2\kappa^2} = \begin{cases} \sqrt{n^2\kappa^2-\abs{\alpha}},  & n \geq \abs{\alpha}^{1/2}/\kappa, \\
            i\sqrt{\abs{\alpha}-n^2 \kappa^2}  & n < \abs{\alpha}^{1/2}/\kappa,
        \end{cases}
    \]
    for nonnegative integers $n$. This parameter will appear in functions of the form $\cosh(\theta_n s)$ and $\sinh(\theta_n s)/\theta_n$, which are always real-valued. We record that
    \begin{align*}
        \cosh(\theta_n s) &= \begin{cases} \mathrlap{\cosh(\abs{\theta_n} s),}\hphantom{\sinh(\abs{\theta_n} s)/\abs{\theta_n},} & n \geq \abs{\alpha}^{1/2}/\kappa, \\
            \cos(|\theta_n|s),   & n < \abs{\alpha}^{1/2}/\kappa,
        \end{cases}\\
        \frac{\sinh(\theta_n s)}{\theta_n} &= \begin{cases} \sinh(\abs{\theta_n} s)/\abs{\theta_n}, & n \geq \abs{\alpha}^{1/2}/\kappa, \\
            \sin(|\theta_n| s)/|\theta_n|,   & n < \abs{\alpha}^{1/2}/\kappa.
        \end{cases}
    \end{align*}
    In the event that $\theta_n = 0$, we will interpret expressions with $\theta_n$ as extended by continuity. In particular, $\sinh(\theta_n s)/\theta_n$ is interpreted as $s$. 
    
    We now describe the kernel of $\mcL(\Lambda)$, which is directly related to the kernel of $D_w\mcF(0,\lambda)$ through $\mcT(\Lambda)$, in terms of the above functions. The following proposition is stated, but not proved, in \cite{Ehrnstroem2011}. We include its proof because it is instructive.
    \begin{proposition}[Kernel of $\mcL(\Lambda)$ \cite{Ehrnstroem2011}] \label{kernelLemma}
        Let $\Lambda \in \mcU$. A basis for $\ker \mcL(\Lambda)$ is then given by $\{ \phi_n \}_{n \in M}$, where
        \begin{equation}
            \label{phik}
            \phi_n(x,s) \coloneqq \cos(n\kappa x) \frac{\sinh(\theta_n s)}{\theta_n}
        \end{equation}
        and $M=M(\Lambda)$ is the finite set of all $n \in \N_0$ satisfying the \emph{kernel equation}
        \begin{equation}
            \label{KerEq}
            l(n,\alpha) = r(\Lambda),
        \end{equation}
        where
        \begin{align}
            l(n,\alpha) &\coloneqq \theta_n \coth (\theta_n), \notag \\
            r(\Lambda) &\coloneqq \frac{1}{\mu^2 |\alpha| \sin^2(\lambda)}+ |\alpha|^{1/2} \cot(\lambda). \label{eq:kernel_equation_rhs}
        \end{align}
    \end{proposition}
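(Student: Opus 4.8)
The plan is to read off $\ker\mcL(\Lambda)$ directly by separation of variables. By \eqref{mcLExp}, a function $\phi\in\tilde X_2$ lies in $\ker\mcL(\Lambda)$ exactly when it solves the elliptic equation $(\Delta-\alpha)\phi=0$ in $\hat{\Omega}$ together with the homogeneous conditions $\phi|_{s=0}=0$ (built into $\tilde X_2$) and $[\psi_{0s}\phi_s-(\psi_{0ss}+1/\psi_{0s})\phi]_{s=1}=0$, where $\psi_{0s}(1)$ and $\psi_{0ss}(1)$ are the constants obtained from \eqref{Laminar}. Since $\phi$ is even and $2\pi/\kappa$-periodic in $x$, I would expand $\phi(x,s)=\sum_{n\ge 0}a_n(s)\cos(n\kappa x)$, the coefficients $a_n$ inheriting $C^{2,\beta}$ regularity on $[0,1]$. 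Testing $(\Delta-\alpha)\phi=0$ against $\cos(n\kappa x)$ and integrating by parts in $x$ yields the ODE $a_n''-\theta_n^2 a_n=0$, while the bottom condition forces $a_n(0)=0$; as $\cosh(\theta_n s)$ is nonzero at $s=0$, this determines $a_n(s)=b_n\sinh(\theta_n s)/\theta_n$ for a scalar $b_n$ (extended by continuity when $\theta_n=0$). Hence $\phi$ has the same Fourier coefficients as $\sum_n b_n\phi_n$.

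Next I would impose the surface condition. Using $\phi_n(x,1)=\cos(n\kappa x)\sinh(\theta_n)/\theta_n$ and $\partial_s\phi_n(x,1)=\cos(n\kappa x)\cosh(\theta_n)$, the $n$-th Fourier coefficient of $\mcL_1(\Lambda)\phi$ is $b_n$ times
\[
    \psi_{0s}(1)\cosh(\theta_n)-\Big(\psi_{0ss}(1)+\tfrac{1}{\psi_{0s}(1)}\Big)\frac{\sinh(\theta_n)}{\theta_n}.
\]
A short computation from \eqref{Laminar} gives $\psi_{0s}(1)=-\mu|\alpha|^{1/2}\sin\lambda$ and $\psi_{0ss}(1)=-\mu|\alpha|\cos\lambda$, so that $\psi_{0ss}(1)/\psi_{0s}(1)=|\alpha|^{1/2}\cot\lambda$ and $\psi_{0s}(1)^{-2}=(\mu^2|\alpha|\sin^2\lambda)^{-1}$; here \eqref{psiAssumption} and $0<\lambda<\pi$ ensure all these quantities are finite and nonzero. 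Dividing the displayed expression by $\psi_{0s}(1)$, it vanishes precisely when $\theta_n\coth\theta_n=|\alpha|^{1/2}\cot\lambda+(\mu^2|\alpha|\sin^2\lambda)^{-1}$, that is, when $l(n,\alpha)=r(\Lambda)$. Therefore $b_n=0$ for every $n\notin M$, and since the finite sum $\sum_{n\in M}b_n\phi_n$ then has the same Fourier coefficients as the continuous function $\phi$, the two agree. This shows $\{\phi_n\}_{n\in M}$ spans $\ker\mcL(\Lambda)$; conversely each $\phi_n$ with $n\in M$ lies in $\tilde X_2$ and satisfies the PDE, the bottom condition, and—by the same computation—the surface condition, while the $\phi_n$ are independent because the $\cos(n\kappa x)$ are. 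Finiteness of $M$ is clear: for $n\kappa>|\alpha|^{1/2}$ one has $l(n,\alpha)=\theta_n\coth\theta_n\ge\theta_n=\sqrt{n^2\kappa^2-|\alpha|}\to\infty$, whereas $r(\Lambda)$ is a fixed real number.

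I expect the work to be bookkeeping rather than a genuine obstacle. The two points needing care are, first, that the coefficients in $\mcL_1$ are evaluated at $s=1$—so they are constants and the surface condition decouples across Fourier modes—and, second, the degenerate modes: when $\theta_n=0$ one uses the continuous extension $\sinh(\theta_n s)/\theta_n=s$, and when $\sinh(\theta_n)=0$ (possible only for imaginary $\theta_n$) the displayed bracket reduces to $\psi_{0s}(1)\cosh(\theta_n)\ne 0$, consistently with $l(n,\alpha)$ being undefined there, so such $n$ simply fail the kernel equation. Finally, the spanning argument should be phrased through equality of Fourier coefficients of continuous functions, so as not to invoke termwise convergence of the series for $\phi$.
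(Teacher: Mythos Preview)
Your proposal is correct and follows essentially the same approach as the paper's proof: Fourier expansion in $x$, solving the resulting ODE for each mode with the Dirichlet condition at $s=0$, then imposing the Robin condition at $s=1$ to obtain the kernel equation, with finiteness of $M$ coming from the growth of $l(\cdot,\alpha)$. Your treatment is slightly more explicit about the degenerate cases $\theta_n=0$ and $\sinh(\theta_n)=0$ and about why equality of Fourier coefficients suffices, but the argument is the same.
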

    \begin{proof}
        Suppose that $\phi \in \ker \mcL(\Lambda)$, and expand it in a Fourier series
        \[
            \phi(x,s) = \sum_{n=0}^{\infty} a_n(s) \cos(n\kappa x).
        \]
        From $\mcL_2(\Lambda)\phi=0$, we deduce that the coefficients satisfy
        \begin{equation}
            \label{anDE}
            a_n''(s)-\theta_n^2 a_n(s)= 0, \quad s \in (0,1),
        \end{equation}
        while $\phi|_{s=0}=0$ and $\mcL_1(\Lambda)\phi=0$ yield the boundary conditions
        \begin{subequations}
            \begin{gather}
                \label{eq:an_dirichlet}
                a_n(0) =0,\\
                \label{eq:an_robin}
                \psi_{0s}(1)a_n'(1)-\paren*{\psi_{0ss}(1) + \frac{1}{\psi_{0s}(1)}}a_n(1)=0,
            \end{gather}
        \end{subequations}
        for all $n \geq 0$.
        
        The general solution of \eqref{anDE} with the boundary condition \eqref{eq:an_dirichlet} is
        \[
            a_n(s) = B_n \frac{\sinh(\theta_n s)}{\theta_n}, \quad B_n \in \R,\, n \geq 0,
        \]
        for which the Robin condition \eqref{eq:an_robin} reduces to
        \[
            \paren*{ \psi_{0s}(1)\cosh(\theta_n) - \paren*{ \psi_{0ss}(1) + \frac{1}{\psi_{0s}(1)} } \frac{\sinh(\theta_n)}{\theta_n} } B_n = 0.
        \]
        Hence, if $B_n$ (and thus $a_n$) is nonzero, then
        \begin{equation}
            \label{KEcatter}
            \psi_{0s}(1)\cosh(\theta_n) - \paren*{ \psi_{0ss}(1) + \frac{1}{\psi_{0s}(1)} } \frac{\sinh(\theta_n)}{\theta_n} = 0
        \end{equation}
        must hold. Observe that \Cref{KEcatter} implies that $\sinh(\theta_n)/\theta_n \neq 0$; otherwise we would have $\cosh(\theta_n) =\sinh(\theta_n)=0$, and therefore $\exp(\theta_n)=0$. Thus, by inserting the definition \eqref{Laminar} of $\psi_0$ into \Cref{KEcatter}, we arrive at \eqref{KerEq}. This condition is also sufficient for $\phi_n$ to lie in the kernel.
        
        The set $M$ of $n \in \N_0$ such that \eqref{KerEq} holds is finite, because the function $l(\cdot,\alpha)$ is strictly increasing as soon as $n \geq \abs{\alpha}^{1/2}/\kappa$.
    \end{proof}
    \begin{remark}
        \label{remark:nontrivial_solutions}
        In order to get nontrivial solutions, $\Lambda$ should be chosen such that $0 \notin M(\Lambda)$. The function $\phi_0$, see \eqref{phik}, does not depend on $x$.
    \end{remark}
    
    The next lemma, inspired by \cite[Theorem IV.5.17]{Kato1995}, serves to show that the set-valued map $\map{M}{\mcU}{2^{\N_0}}$ defined in \Cref{kernelLemma} is upper semicontinuous. This implies that no new solutions of the kernel equation \eqref{KerEq} can appear if $\Lambda$ is perturbed slightly.
    
    \begin{lemma}[Upper semicontinuity]
        \label{lemma:M_decreasing}
        
        Let $\Lambda^* \in \mcU$. Then
        \[
            M(\Lambda) \subset M(\Lambda^*)
        \]
        for all $\Lambda$ in a neighborhood of $\Lambda^*$.
    \end{lemma}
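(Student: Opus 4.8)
The plan is to reduce the statement to the continuity in $\Lambda$ of finitely many explicit real-valued functions, together with a uniform-in-$\Lambda$ bound on the indices that can possibly lie in $M$. The starting point is the reformulation of the kernel equation obtained in the proof of \Cref{kernelLemma}: for $\Lambda \in \mcU$ one has $n \in M(\Lambda)$ if and only if $g(n,\Lambda) = 0$, where
\[
    g(n,\Lambda) \coloneqq \psi_{0s}(1)\cosh(\theta_n) - \paren*{\psi_{0ss}(1) + \frac{1}{\psi_{0s}(1)}}\frac{\sinh(\theta_n)}{\theta_n}.
\]
I would deliberately work with $g$ rather than with $l - r$, in order to avoid the poles of $\theta_n \coth \theta_n$: since $\cosh z$ and $\sinh(z)/z$ are entire functions of $z^2$, and $\theta_n^2 = \alpha + n^2\kappa^2$, the quantities $\cosh \theta_n$ and $\sinh(\theta_n)/\theta_n$ are real-analytic in $\alpha$; combined with $\psi_{0s}(1) = -\mu|\alpha|^{1/2}\sin\lambda \neq 0$ on $\mcU$ (which is exactly \eqref{psiAssumption}) and the smoothness of $\psi_{0s}(1),\psi_{0ss}(1)$ in $\Lambda$, this shows $g(n,\cdot)$ is continuous — indeed smooth — on all of $\mcU$ for each fixed $n$. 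No continuity in $n$ is needed.

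The second ingredient is a uniform bound on $M$ near $\Lambda^*$. Fix a compact neighborhood $K \subset \mcU$ of $\Lambda^*$. On $K$ the quantity $|\alpha|$ stays in a compact subinterval of $(0,\infty)$ and $r$ stays bounded, say $\abs{r(\Lambda)} \le R$ for $\Lambda \in K$. If $n\kappa > \sqrt{\sup_K\abs{\alpha} + R^2}$, then for every $\Lambda \in K$ the number $\theta_n = \sqrt{n^2\kappa^2 - \abs{\alpha}}$ is real and positive, so $l(n,\alpha) = \theta_n\coth\theta_n > \theta_n > R \ge r(\Lambda)$, whence $n \notin M(\Lambda)$. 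Thus there is an $N$ with $M(\Lambda) \subset \{0,1,\dots,N\}$ for all $\Lambda \in K$; this is just the monotonicity estimate from the end of the proof of \Cref{kernelLemma}, made uniform in $\Lambda$ by passing to a compact set.

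With these two facts in hand the conclusion follows at once: for each of the finitely many $n \in \{0,\dots,N\} \setminus M(\Lambda^*)$ we have $g(n,\Lambda^*) \ne 0$, hence by continuity a neighborhood $V_n$ of $\Lambda^*$ on which $g(n,\cdot) \ne 0$; taking $V$ to be the interior of $K$ intersected with these finitely many $V_n$ yields a neighborhood of $\Lambda^*$ on which $M(\Lambda) \subset \{0,\dots,N\}$ while $M(\Lambda)$ meets none of the indices in $\{0,\dots,N\}\setminus M(\Lambda^*)$, i.e. $M(\Lambda) \subset M(\Lambda^*)$. The only genuinely delicate point — the "main obstacle" — is precisely the uniform bound: one has to be sure the cutoff $N$ can be chosen once and for all on a whole neighborhood of $\Lambda^*$, which is why I use the coarse bound $l(n,\alpha) > \theta_n$ (valid as soon as $\theta_n$ is real and positive) on a compact $K$ rather than any sharper but less robust estimate. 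The reference to \cite[Theorem IV.5.17]{Kato1995} enters only as motivation — upper semicontinuity of the "spectrum'' $M$ under perturbation — and the elementary argument above is self-contained.
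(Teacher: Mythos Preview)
Your proof is correct and rests on the same two ingredients as the paper's --- a uniform bound on the indices that can lie in $M(\Lambda)$ near $\Lambda^*$, followed by continuity for the finitely many indices that remain --- but the execution differs in two respects. First, you argue directly (compactness, then finite intersection of open sets) where the paper argues by contradiction via sequences. Second, and more substantively, you work with the smooth function $g(n,\Lambda)$ from \eqref{KEcatter} rather than with $l(n,\alpha)-r(\Lambda)$; this sidesteps the poles of $\theta_n\coth\theta_n$ and so avoids the paper's slightly delicate step of inferring, from boundedness of $l(n,\alpha_i)$ along the sequence, that $l(n,\cdot)$ is actually well-defined and continuous at $\alpha^*$. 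The trade-off is that you must set up the uniform cutoff $N$ explicitly on a compact $K$, whereas the paper extracts boundedness of $(n_i)$ automatically from the sequential contradiction; both are routine, and your version is arguably the cleaner of the two.
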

    \begin{proof}
        Suppose that this is not the case. Then we can construct a sequence $(\Lambda_i)_{i \in \N}$ converging to $\Lambda^*$, and a corresponding sequence $(n_i)_{i \in \N}$ such that $n_i \notin M(\Lambda^*)$ and $l(n_i,\alpha_i)=r(\Lambda_i)$ for all $i \in \N$. By the continuity of $r$ at $\Lambda^*$, the sequence $(r(\Lambda_i))_{i\in \N}$, and therefore $(l(n_i,\alpha_i))_{i \in \N}$, is bounded. This implies that $(n_i)_{i \in \N}$ is bounded, so we may assume that it is constant. Thus there is an $n \notin M(\Lambda^*)$ such that $l(n,\alpha_i) = r(\Lambda_i)$ for all $i \in \N$. The boundedness of the sequences now ensures that $l(n,\cdot)$ is well-defined and continuous at $\alpha^*$. We conclude that $l(n,\alpha^*) = r(\Lambda^*)$, which contradicts $n \notin M(\Lambda^*)$.
    \end{proof}
    
    \begin{figure}[htb]
        \centering
        \includegraphics{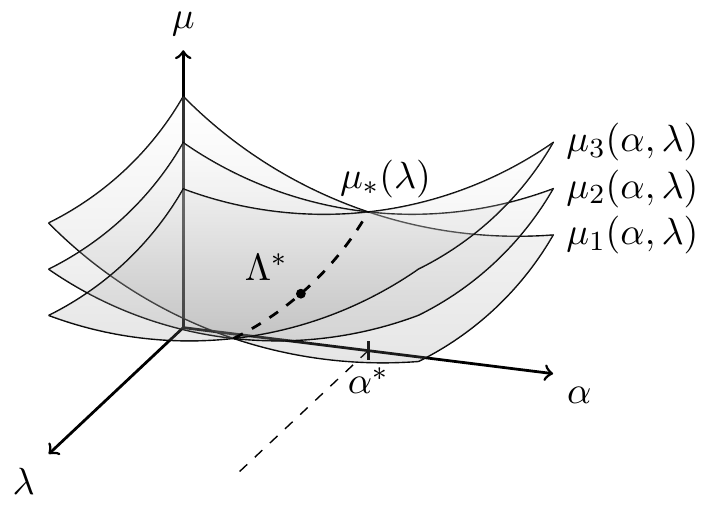}
        \caption{An illustration of \Cref{theorem:kernel_local_description} when $|M(\Lambda^*)|=3$. A full description of the kernel can be given near $\Lambda^*$.}
        \label{fig:kernel_geometry}
    \end{figure}
    
    We can now use \Cref{lemma:M_decreasing} to give a local description of the structure of the kernel equation. This will be useful when describing the solution set of \eqref{eq:F_problem}. See also \Cref{fig:kernel_geometry}.
    
    \begin{theorem}[Local description]
        \label{theorem:kernel_local_description}
        Suppose that $M(\Lambda^*) = \{n_1,\ldots,n_N\}$. Then we may define
        \begin{equation}
            \label{eq:definition_mui}
            \mu_i(\alpha,\lambda) \coloneqq \frac{\sgn(\mu^*)}{\abs{\alpha}^{1/2}\sin(\lambda)(l(n_i,\alpha)-\abs{\alpha}^{1/2}\cot(\lambda))^{1/2}}, \quad 1 \leq i \leq N,
        \end{equation}
        on a neighborhood of $(\alpha^*,\lambda^*)$,
        \[
            \mu_*(\lambda) \coloneqq \mu_1(\alpha^*,\lambda) \quad (=\cdots=\mu_N(\alpha^*,\lambda))
        \]
        on a neighborhood of $\lambda^*$, and we have
        \[
        M(\Lambda) = \begin{cases}
        M(\Lambda^*) & \alpha = \alpha^*, \mu = \mu_*(\lambda),\\
        \{n_i\} & \alpha \neq \alpha^*, \mu = \mu_i(\alpha,\lambda),\\
        \varnothing & \text{otherwise,}
        \end{cases}
        \]
        for all $\Lambda$ in a neighborhood of $\Lambda^*$.
    \end{theorem}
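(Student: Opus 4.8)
The plan is to parametrize, for each $n_i \in M(\Lambda^*)$, the set of nearby $\Lambda$ with $n_i \in M(\Lambda)$ by solving the kernel equation \eqref{KerEq} for $\mu$, and then to glue these local descriptions together using the upper semicontinuity in \Cref{lemma:M_decreasing}. First I would check that the objects in the statement make sense. Since $n_i \in M(\Lambda^*)$, the value $l(n_i,\alpha^*) = r(\Lambda^*)$ is finite, so $\alpha^* + n_i^2\kappa^2$ avoids the poles (located at $-k^2\pi^2$, $k \geq 1$) of the meromorphic function $f(z) \coloneqq \sqrt{z}\coth\sqrt{z}$; hence $l(n_i,\cdot) = f(\cdot + n_i^2\kappa^2)$ is real analytic near $\alpha^*$. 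From \eqref{eq:kernel_equation_rhs} we get $l(n_i,\alpha^*) - \abs{\alpha^*}^{1/2}\cot(\lambda^*) = ((\mu^*)^2\abs{\alpha^*}\sin^2(\lambda^*))^{-1} > 0$, and this quantity, together with $\sin(\lambda)$, stays positive on a neighborhood of $(\alpha^*,\lambda^*)$; this makes $\mu_i$ in \eqref{eq:definition_mui} well defined and smooth there, and a short computation gives $\mu_i(\alpha^*,\lambda^*) = \mu^*$. Since each $l(n_i,\alpha^*)$ equals $r(\Lambda^*)$, the formula \eqref{eq:definition_mui} also yields $\mu_1(\alpha^*,\cdot) = \cdots = \mu_N(\alpha^*,\cdot)$, which is exactly what defines $\mu_*$.

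Next I would invoke \Cref{lemma:M_decreasing} to restrict to a neighborhood of $\Lambda^*$ on which $M(\Lambda) \subset \{n_1,\ldots,n_N\}$. On this neighborhood, $n_i \in M(\Lambda)$ is equivalent to $l(n_i,\alpha) = r(\Lambda)$, and solving \eqref{eq:kernel_equation_rhs} for $\mu$ (using the positivity just established) turns this into $\mu^2 = \mu_i(\alpha,\lambda)^2$. Because $\mu_i(\alpha^*,\lambda^*) = \mu^* \neq 0$, after shrinking the neighborhood so that $\mu$ and every $\mu_i(\alpha,\lambda)$ lie within $\abs{\mu^*}/2$ of $\mu^*$, the ambiguity in sign disappears and I am left with the clean equivalence $n_i \in M(\Lambda) \iff \mu = \mu_i(\alpha,\lambda)$. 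Consequently $M(\Lambda) = \{\, n_i : \mu = \mu_i(\alpha,\lambda) \,\}$ for $\Lambda$ near $\Lambda^*$, and the whole statement reduces to deciding when the numbers $\mu_1(\alpha,\lambda),\ldots,\mu_N(\alpha,\lambda)$ coincide.

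By \eqref{eq:definition_mui}, $\mu_i(\alpha,\lambda) = \mu_j(\alpha,\lambda)$ precisely when $l(n_i,\alpha) = l(n_j,\alpha)$, with no dependence on $\lambda$. The crux is then to show that for $i \neq j$ this forces $\alpha = \alpha^*$ locally. Here I would argue that $h_{ij} \coloneqq l(n_i,\cdot) - l(n_j,\cdot)$ is real analytic near $\alpha^*$ and vanishes at $\alpha^*$, so if $\alpha^*$ were not an isolated zero then $h_{ij} \equiv 0$ near $\alpha^*$; analytically continuing the resulting identity $f(z) = f(z + \delta)$, with $\delta \coloneqq (n_j^2 - n_i^2)\kappa^2 \neq 0$, over the connected domain of the meromorphic function $z \mapsto f(z) - f(z+\delta)$ would then make $f$ periodic, which is impossible because $f$ is strictly increasing and unbounded on $(0,\infty)$. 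Shrinking the neighborhood once more therefore gives $l(n_i,\alpha) \neq l(n_j,\alpha)$, hence $\mu_i(\alpha,\lambda) \neq \mu_j(\alpha,\lambda)$, whenever $\alpha \neq \alpha^*$ and $i \neq j$.

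The trichotomy then follows by bookkeeping. If $\alpha = \alpha^*$ then all $\mu_i(\alpha^*,\lambda)$ equal $\mu_*(\lambda)$, so $M(\Lambda) = M(\Lambda^*)$ when $\mu = \mu_*(\lambda)$ and $M(\Lambda) = \varnothing$ otherwise. If $\alpha \neq \alpha^*$ then the values $\mu_1(\alpha,\lambda),\ldots,\mu_N(\alpha,\lambda)$ are pairwise distinct, so $\mu$ can equal at most one of them, giving $M(\Lambda) = \{n_i\}$ when $\mu = \mu_i(\alpha,\lambda)$ and $M(\Lambda) = \varnothing$ otherwise; throughout, $\Lambda$ stays in $\mcU$ because $\mu$ is close to $\mu^* \neq 0$ (and $\alpha < 0$, $0 < \lambda < \pi$ on a small enough neighborhood). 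I expect the only genuine obstacle to be the separation of the curves $\alpha \mapsto l(n_i,\alpha)$ in the previous paragraph: these functions are not monotone across the poles of $f$, so one cannot simply compare them or their first derivatives at $\alpha^*$, and it is the interplay of real analyticity with the non-periodicity of $f$ that closes the gap.
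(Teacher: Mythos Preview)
Your proof is correct and follows essentially the same route as the paper: invoke upper semicontinuity to confine $M(\Lambda)$ to $M(\Lambda^*)$, solve the kernel equation for $\mu$ to obtain the equivalence $n_i \in M(\Lambda) \iff \mu = \mu_i(\alpha,\lambda)$, and then use real analyticity of $l(n_i,\cdot)-l(n_j,\cdot)$ to separate the graphs of the $\mu_i$ for $\alpha \neq \alpha^*$. The only difference is that the paper simply asserts that $l(n_i,\cdot)-l(n_j,\cdot)$ is a nonzero analytic function, whereas you supply a justification via the impossibility of $f$ being periodic; this extra step is sound and fills a gap the paper leaves implicit.
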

    \begin{proof}
        By \Cref{lemma:M_decreasing} there is a neighborhood of $\Lambda^*$ in which $M(\Lambda)$ is the set of $n_i\in M(\Lambda^*)$ for which $l(n_i,\alpha) = r(\Lambda)$. Observe now that $\Lambda$ sufficiently close to $\Lambda^*$ we have $l(n_i,\alpha) = r(\Lambda)$ if and only if $\mu = \mu_i(\alpha,\lambda)$, where $\mu_i$ is as in \eqref{eq:definition_mui}. Moreover, if $i \neq j$ then $l(n_i,\cdot)-l(n_j,\cdot)$ is a nonzero analytic function on a neighborhood of $\alpha^*$. It follows that we may choose the neighborhood of $\Lambda^*$ in such a way that the only intersection of the graphs of the $\mu_i$ occurs when $\alpha =\alpha^*$.
    \end{proof}
    
    The bifurcation results in \Cref{1dBifSection,2dBifSection} are valid under the assumption that $\ker \mcL(\Lambda)$ is respectively one- and two-dimensional. \Cref{Bif kernels} below is a general result on the kernel equation \eqref{KerEq} from \cite{Ehrnstroem2011}, which in particular shows that it is indeed possible to choose $\Lambda \in \mcU$ such that the dimension of the kernel is one or two.
    
    \begin{lemma}[Kernel equation \cite{Ehrnstroem2011}]
        \label{Bif kernels}
        \leavevmode
        \begin{enumerate}[(i)]
            \item For every $\alpha$ and any $n$ for which $l(n,\alpha)$ is well-defined there are $\mu$ and $\lambda$ such that $n \in M(\Lambda)$.
            \item Suppose that $\lambda \in [\pi/2,\pi)$and that $n_1,n_2 \in \N_0$ satisfy
            \[
                n_2^2 \geq n_1^2 + \paren*{\frac{3\pi}{2\kappa}}^2
            \]
            Then there are $\alpha$ and $\mu$ such that $n_1, n_2 \in M(\Lambda)$ and any other solution of \eqref{KerEq} must be smaller than $n_1$.	
        \end{enumerate}
    \end{lemma}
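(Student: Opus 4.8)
The plan is to reduce both statements to the behaviour of the single real-valued function
\[
    g(t) \coloneqq \sqrt{t}\coth\sqrt{t},
\]
defined for $t \geq 0$ by continuity at the origin, where $g(0) = 1$, and equal to $\sqrt{-t}\cot\sqrt{-t}$ for $t < 0$; with this notation $l(n,\alpha) = g(\alpha + n^2\kappa^2)$. First I would record three elementary facts: the only singularities of $g$ are poles at the points $-k^2\pi^2$, $k \in \N$; on each of $(-4\pi^2,-\pi^2)$ and $(-\pi^2,\infty)$ the function $g$ is strictly increasing and maps onto $\R$; and $g > 1$ on $(0,\infty)$, $g(0) = 1$, while $g < 1$ on $(-\pi^2,0)$. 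I would also observe that, for fixed $\alpha < 0$ and $\lambda \in (0,\pi)$, the map $\mu \mapsto r(\Lambda) = (\mu^2|\alpha|\sin^2\lambda)^{-1} + |\alpha|^{1/2}\cot\lambda$ is a continuous surjection from $\R \setminus \{0\}$ onto $(|\alpha|^{1/2}\cot\lambda, \infty)$. Part (i) is then immediate: given $\alpha < 0$ and an $n$ for which $L_0 \coloneqq l(n,\alpha)$ is well-defined, I would pick $\lambda$ close enough to $\pi$ that $|\alpha|^{1/2}\cot\lambda < L_0$ (possible as $\cot\lambda \to -\infty$), and then choose $\mu \neq 0$ with $r(\Lambda) = L_0 = l(n,\alpha)$, so that $n \in M(\Lambda)$.

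For part (ii), set $\delta \coloneqq (n_2^2 - n_1^2)\kappa^2 \geq (3\pi/2)^2$. The first step is to find $\alpha < 0$ with $l(n_1,\alpha) = l(n_2,\alpha)$, arranged so that in addition $t_1 \coloneqq \alpha + n_1^2\kappa^2 \in (-4\pi^2, -\pi^2)$ and $t_2 \coloneqq \alpha + n_2^2\kappa^2 > 0$. To this end I would study $h(t_1) \coloneqq g(t_1) - g(t_1 + \delta)$ on the interval $I \coloneqq (\max\{-4\pi^2, -\delta\}, -\pi^2)$, which is nonempty because $\delta > \pi^2$ and on which $t_1 + \delta > 0$, so that $h$ is continuous there. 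As $t_1$ increases to $-\pi^2$ one has $g(t_1) \to +\infty$ while $g(t_1 + \delta)$ stays finite, so $h \to +\infty$. At the left endpoint of $I$, either $g(t_1) \to -\infty$ (when $\delta \geq 4\pi^2$, so the endpoint is $-4\pi^2$) or $h$ tends to $g(-\delta) - g(0) = \sqrt{\delta}\cot\sqrt{\delta} - 1 \leq -1$ (when $\delta < 4\pi^2$, so the endpoint is $-\delta$, which is not a pole of $g$, and $\sqrt{\delta} \in [3\pi/2, 2\pi)$ forces $\cot\sqrt{\delta} \leq 0$). In either case $h$ changes sign on $I$, so by the intermediate value theorem there is a $t_1 \in I$ with $g(t_1) = g(t_1 + \delta)$; call this common value $L$, and note $L > 1$ since $t_1 + \delta > 0$. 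Putting $\alpha \coloneqq t_1 - n_1^2\kappa^2 < 0$ then gives $l(n_1,\alpha) = l(n_2,\alpha) = L$, and since $\cot\lambda \leq 0$ for $\lambda \in [\pi/2,\pi)$ we have $L > 1 > |\alpha|^{1/2}\cot\lambda$, so the argument of part (i) produces a $\mu \neq 0$ with $r(\Lambda) = L$; hence $n_1, n_2 \in M(\Lambda)$.

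It remains to rule out solutions $n > n_1$ with $n \neq n_2$. For such $n$, put $t_n \coloneqq \alpha + n^2\kappa^2$, so that $t_n > t_1 > -4\pi^2$. If $t_n \geq 0$, then $g(t_n) = L$ would force $t_n = t_2$, hence $n = n_2$, by strict monotonicity of $g$ on $[0,\infty)$. If $-\pi^2 < t_n < 0$, then $g(t_n) < 1 < L$. If $-4\pi^2 < t_n < -\pi^2$, then $t_n$ lies in the same branch as $t_1$ with $t_n > t_1$, so $g(t_n) > g(t_1) = L$. The remaining possibility $t_n \in \{-4\pi^2, -\pi^2\}$ corresponds to $\theta_n \in i\pi\Z$, for which $\sinh(\theta_n)/\theta_n = 0$ and \eqref{KEcatter} fails under \eqref{psiAssumption}, so such $n$ are outside $M(\Lambda)$ as well. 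In every case $l(n,\alpha) \neq L = r(\Lambda)$, so no $n > n_1$ with $n \neq n_2$ satisfies \eqref{KerEq}.

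I expect the construction of $\alpha$ in part (ii) to be the main obstacle: one must keep $t_1$ and $t_1 + \delta$ in controlled branches of $g$, make $h$ change sign on $I$, and avoid the poles of $g$, all at once. It is precisely here that the hypothesis $n_2^2 \geq n_1^2 + (3\pi/(2\kappa))^2$ enters, since it amounts to $\sqrt{\delta} \geq 3\pi/2$ and hence guarantees $g(-\delta) = \sqrt{\delta}\cot\sqrt{\delta} \leq 0$ in the case $\delta < 4\pi^2$. Once the monotonicity and sign structure of $g$ is in place, the boundary limits, the bound $L > 1$, and the exclusion of large $n$ are all routine.
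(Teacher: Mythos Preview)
The paper does not supply its own proof of this lemma; it is quoted verbatim from \cite{Ehrnstroem2011} and left unproved here. Your argument is correct and self-contained. The reduction to the single function $g(t)=\sqrt{t}\coth\sqrt{t}$ (extended to $t<0$ as $\sqrt{-t}\cot\sqrt{-t}$), together with the observation that $g$ is strictly increasing on each of $(-4\pi^2,-\pi^2)$ and $(-\pi^2,\infty)$, is exactly the right structure, and your intermediate-value construction of $\alpha$ in part~(ii) is clean: the choice of interval $I=(\max\{-4\pi^2,-\delta\},-\pi^2)$ forces $t_1+\delta>0$, the boundary behaviour gives the required sign change of $h$, and the hypothesis $\delta\geq(3\pi/2)^2$ is used precisely where you say, to make $\cot\sqrt{\delta}\leq 0$ when $\delta<4\pi^2$. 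The exclusion of $n>n_1$ with $n\neq n_2$ by splitting according to which branch of $g$ the point $t_n$ lands in is also correct, including the edge case $t_n=-\pi^2$, where $\sinh(\theta_n)/\theta_n=0$ and \eqref{KEcatter} cannot hold because $\psi_{0s}(1)\cosh(\theta_n)\neq 0$ under \eqref{psiAssumption}.

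One very minor remark: in part~(i) you need not send $\lambda$ toward $\pi$; taking $\lambda=\pi/2$ already gives $|\alpha|^{1/2}\cot\lambda=0$, and then any real $L_0$ with $L_0>0$ is hit by some $\mu$, while for $L_0\leq 0$ a $\lambda$ slightly beyond $\pi/2$ suffices. This does not affect correctness.
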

    
    It is, however, the case that higher-dimensional kernels are, in a sense, rare\footnote{A variation of this was pointed out already in \cite{Ehrnstroem2011}; however, not taking into account the points where $l$ is not well-defined. We slightly improve upon the result here. Both $J$ and its closure are small.}:
    
    \begin{lemma}
        \label{lemma:higher_dimensional_rare}
        Let $J$ be set of all values of $\alpha$ for which there exist $\lambda$ and $\mu$ such that $\abs{M(\Lambda)} \geq 2$. Then the limit points of $J$ are contained in the set
        \begin{equation}
            \label{eq:bad_alphas}
            \{-(m_1^2\kappa^2  + m_2^2\pi^2) : m_1 \in \N_0, m_2 \in \N\}.
        \end{equation}
        In particular, $J$ consists of isolated points, except possibly those that lie in the set defined in  \eqref{eq:bad_alphas}, and has countable closure.         
    \end{lemma}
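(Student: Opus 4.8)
Write $\mathcal{B}$ for the set in \eqref{eq:bad_alphas}. The plan is to turn the statement into a question about the single function $l$ and then combine real-analyticity with a compactness argument. The first step is the reduction: if $\alpha\in J$, then $\abs{M(\Lambda)}\geq 2$ for some $\Lambda=(\mu,\alpha,\lambda)$, so by \Cref{kernelLemma} there are distinct $n_1,n_2\in\N_0$ with $l(n_1,\alpha)=r(\Lambda)=l(n_2,\alpha)$; in particular both values are well-defined and they coincide. (The converse is also true, since for each fixed $\alpha<0$ the quantity $r(\Lambda)$ runs over all of $\R$ as $(\mu,\lambda)$ varies, but only the stated implication is needed.) Next I would record, as in the proof of \Cref{kernelLemma}, that $l(n,\alpha)=g(\alpha+n^2\kappa^2)$ with $g(z)=\sqrt{z}\coth\sqrt{z}$ meromorphic on $\C$ and having poles exactly at $z=-m^2\pi^2$, $m\in\N$; hence $l(n,\cdot)$ is real-analytic on $\R$ away from the points $-(n^2\kappa^2+m^2\pi^2)$, $m\in\N$, and the union over $n\in\N_0$ of these exceptional sets is precisely $\mathcal{B}$. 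Since $m_1^2\kappa^2+m_2^2\pi^2\leq R$ has only finitely many solutions $(m_1,m_2)$ for every $R$, the set $\mathcal{B}$ is closed and discrete in $\R$.

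The main argument is by contradiction: suppose $\alpha^*$ is a limit point of $J$ with $\alpha^*\notin\mathcal{B}$. Since $\mathcal{B}$ is closed I can choose a compact interval $K\ni\alpha^*$ disjoint from $\mathcal{B}$, so that every $l(n,\cdot)$ is real-analytic, hence continuous and bounded, on $K$. The crucial --- and hardest --- point is that only finitely many index pairs can contribute coincidences on $K$: a priori the set $J\cap K$ sits inside $\bigcup_{n_1\neq n_2}\brac{\alpha\in K:l(n_1,\alpha)=l(n_2,\alpha)}$, a union over infinitely many pairs with no obvious discreteness. To control it I would use that, for $n$ larger than $\abs{\alpha}^{1/2}/\kappa$ uniformly over $K$, one has $l(n,\alpha)=\theta_n\coth\theta_n>\theta_n\geq(\min K+n^2\kappa^2)^{1/2}\to\infty$, and that $n\mapsto l(n,\alpha)$ is strictly increasing there (both facts are in the proof of \Cref{kernelLemma}). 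Comparing these growing values with the bound $\max\brac{\abs{l(n,\alpha)}:\alpha\in K,\ n\leq N_0}$, where $N_0=\lceil\abs{\min K}^{1/2}/\kappa\rceil$, forces $\max(n_1,n_2)\leq N_1$ for some $N_1=N_1(K)$ whenever $l(n_1,\alpha)=l(n_2,\alpha)$ with $\alpha\in K$ and $n_1\neq n_2$.

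Once the problem is localized to the finitely many pairs $0\leq n_1<n_2\leq N_1$, I would finish by analyticity. For each such pair, $h_{n_1,n_2}\coloneqq l(n_1,\cdot)-l(n_2,\cdot)$ is real-analytic on a neighborhood of $K$ and not identically zero --- this is the nonvanishing already used in the proof of \Cref{theorem:kernel_local_description}, and it is transparent from the meromorphic picture, since $l(n_1,\cdot)$ and $l(n_2,\cdot)$ have poles at the distinct points $-(n_i^2\kappa^2+m^2\pi^2)$, $m\in\N$. Hence each set $\brac{\alpha\in K:l(n_1,\alpha)=l(n_2,\alpha)}$ is discrete, so finite, and therefore $J\cap K$ is finite, contradicting that $\alpha^*$ is a limit point of $J$. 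This proves that all limit points of $J$ lie in $\mathcal{B}$. The remaining assertions are then formal: each $\alpha\in J\setminus\mathcal{B}$, not being a limit point of $J$, is isolated in $J$, so $J\setminus\mathcal{B}$ is a set of isolated points in $\R$ and hence countable; since $\mathcal{B}$ is countable, $J$ is countable, and $\overline{J}\subseteq J\cup\mathcal{B}$ is countable.
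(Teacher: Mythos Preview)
Your proof is correct and follows essentially the same approach as the paper: reduce membership in $J$ to a coincidence $l(n_1,\alpha)=l(n_2,\alpha)$, use growth and monotonicity of $l(\cdot,\alpha)$ to bound the admissible indices, and then invoke real-analyticity of $l(n_1,\cdot)-l(n_2,\cdot)$ to force discreteness. The paper carries this out via a sequential argument (extracting constant subsequences of indices along $\alpha_i\to\alpha^*$), whereas you work on a compact interval $K$ disjoint from $\mathcal{B}$ and show $J\cap K$ is finite; these are equivalent packagings of the same idea. Your version is slightly more explicit in two places---you justify $h_{n_1,n_2}\not\equiv 0$ by the disjointness of the pole sets of $l(n_1,\cdot)$ and $l(n_2,\cdot)$, and you spell out the closing deductions about isolated points and countability of $\overline{J}$---but neither constitutes a genuinely different route.
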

    \begin{proof} 
        The set defined in \eqref{eq:bad_alphas} consists precisely of the values of $\alpha$ for which there is at least one $n \in \N_0$ such that $l(n,\alpha)$ is not well-defined. Let $\alpha$ be such that it is \emph{not} in this set.
        
        Suppose, to the contrary, that there is a sequence $(\alpha_i)_{i \in \N}$ satisfying $\alpha_i \neq \alpha$ for all $i \in \N$ and which converges to $\alpha$, with corresponding sequences $(n_{1,i})_{i \in \N}$ and $(n_{2,i})_{i \in \N}$ such that $n_{1,i} < n_{2,i}$ and
        \begin{equation*}
            \label{eq:lhs_equality}
            l(n_{1,i},\alpha_i) = l(n_{2,i},\alpha_i)
        \end{equation*}
        for all $i \in \N$. We must necessarily have
        \[
            n_{1,i} \leq \abs{\alpha_i}^{1/2}/\kappa
        \]
        for all $i \in \N$, and so the sequence $(n_{1,i})_{i \in \N}$ is bounded. The continuity of $l(n,\cdot)$ at $\alpha$ for each $n \in \N_0$ now implies that $(l(n_{1,i},\alpha_i))_{i\in\N}$, and therefore also $(l(n_{2,i},\alpha_i))_{i\in\N}$, is bounded. This, in turn, implies that $(n_{2,i})_{i \in \N}$ is bounded.
        
        By going to a subsequence, we may assume that both $(n_{1,i})_{i \in \N}$ and $(n_{2,i})_{i \in \N}$ are constant. Thus there are $n_1 < n_2 \in \N_0$ such that $l(n_1,\alpha) = l(n_2,\alpha)$ and
        \begin{align*}
            l(n_1,\alpha_i) = l(n_2,\alpha_i)
        \end{align*}
        for all $i \in \N$. But this is impossible, because $l(n_2,\cdot)-l(n_1,\cdot)$ is a nonconstant analytic function in a neighborhood of $\alpha$.
    \end{proof}

    For later use, we give some explicit examples of one- and two-dimensional kernels for $\mcL(\Lambda)$. All satisfy $r(\Lambda)=1$, and the two-dimensional examples have been chosen such that $\theta(n_2,\alpha) =0$. To simplify the parameters involved, we choose specific values of $\kappa$.
    \begin{example}[Explicit kernels]
        \label{example:explicit_kernels}
        Let $\sigma$ be the smallest positive solution of the equation $x\cot(x)=1$.
        \begin{enumerate}[(i)]
            \item When $\kappa = 1$, $\mu=1$, $\alpha = -1$ and $\lambda =\pi/2$, the kernel is one-dimensional, being spanned by $\phi_1(x,s)=\cos(x)s$.
            \item Let $\kappa = \sigma/\sqrt{3}$. When $\mu = 1/(2\kappa)$, $\alpha = -4\kappa^2$ and $\lambda = \pi/2$, the kernel is two-dimensional, with $M = \{1, 2\}$.
            \item Let $\kappa = \sigma/\sqrt{5}$. When $\mu = 1/(3\kappa)$, $\alpha = -9\kappa^2$ and $\lambda=\pi/2$, the kernel is two-dimensional, with $M = \{2,3\}$.
        \end{enumerate}
    \end{example}

    \subsection{Arbitrarily large kernels}
        We now address a question that was raised in \cite{Ehrnstroem2011}: Do there exist $\Lambda \in \mcU$ such that $\ker{D_w\mcF(0,\Lambda)}$ is at least three-dimensional? By also letting the wave number $\kappa$ vary, this question was answered in the affirmative for dimension three in \cite{Ehrnstroem2015}. In essence, their result says that many two-dimensional kernels can be modified in order to yield a three-dimensional kernel. Here, we use a different approach to find kernels of arbitrary dimension for any $\kappa$ in a set $K$ that is dense in $(0,\infty)$.
        
        For any $\alpha < 0$ and $\lambda \in (\pi/2,\pi)$, we can obtain $r(\Lambda)=0$ by choosing $\mu$ to satisfy
        \[
            \mu^2 = - \frac{2}{\abs{\alpha}^{3/2}\sin(2\lambda)},
        \]
        which reduces the kernel equation \eqref{KerEq} to finding $m \in \N$ and $n \in \N_0$ such that
        \[
            \sqrt{\abs{\alpha}-(n\kappa)^2}=\paren*{m-\frac{1}{2}}\pi,
        \]
        is satisfied. This can be written in the form
        \begin{equation}
            \label{eq:kernel_equation_arbitrary}
            \paren*{2n\frac{\kappa}\pi}^2 + (2m-1)^2 = \frac{4\abs{\alpha}}{\pi^2}.
        \end{equation}
        We first consider the case $\kappa = \pi$.
        
        \begin{lemma}[Arbitrary kernel with $\kappa = \pi$]
            \label{lemma:arbitrary_kernel_pi}
            For $\kappa = \pi$ and any $N \in \N$, there exist $\Lambda \in \mcU$ such that $\abs{M(\Lambda)}=N$ and $0 \notin M(\Lambda)$.
        \end{lemma}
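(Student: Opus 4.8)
The plan is to push a little further the computation carried out just before the lemma, thereby reducing everything to a classical fact about sums of two squares. Fix any $\lambda\in(\pi/2,\pi)$ and choose $\mu$ with $\mu^2=-2/(\abs{\alpha}^{3/2}\sin(2\lambda))$; this is legitimate since $\sin(2\lambda)<0$ on $(\pi/2,\pi)$, it forces $r(\Lambda)=0$, and because $\mu\neq 0$, $\alpha<0$ and $0<\lambda<\pi$ it guarantees $\Lambda\in\mcU$. With $\kappa=\pi$, equation \eqref{eq:kernel_equation_arbitrary} becomes
\[
    (2n)^2+(2m-1)^2=\frac{4\abs{\alpha}}{\pi^2},
\]
and, exactly as in the proof of \Cref{kernelLemma}, $l(n,\alpha)=0$ holds precisely when $\cos(\abs{\theta_n})=0$, i.e.\ when $n$ solves this equation for some $m\in\N$. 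For a fixed admissible $n$ the value $m$ is uniquely determined, so, writing $P\coloneqq 4\abs{\alpha}/\pi^2$, the set $M(\Lambda)$ is in bijection with the set of $n\in\N_0$ for which $P-(2n)^2$ is a positive odd square, and $0\in M(\Lambda)$ if and only if $P$ is an odd perfect square. (When $P$ is odd one also checks that $l(n,\alpha)$ is well defined for every $n$, since $\abs{\theta_n}\in\pi\Z$ would make $P$ a sum of two even squares.)

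It thus suffices to exhibit, for each $N\in\N$, an odd integer $P>1$ that is not a perfect square and has exactly $N$ representations $P=a^2+b^2$ with $a,b>0$, $a$ even, $b$ odd; one then takes $\alpha\coloneqq-P\pi^2/4$. I would use $P=5^{2N-1}$. Since $5\equiv 1\pmod 4$, all $2N$ divisors of $P$ are $\equiv 1\pmod 4$, so Jacobi's two-square formula gives $4\cdot 2N=8N$ signed ordered solutions of $x^2+y^2=P$ with $x,y\in\Z$. As $P$ is odd and not a square, none of these has $x=0$ or $y=0$, and as $P$ is odd none has $\abs{x}=\abs{y}$; therefore the $8N$ solutions fall into exactly $N$ orbits $\{(\pm a,\pm b),(\pm b,\pm a)\}$ with $0<a<b$, and in each orbit precisely one of $a,b$ is even. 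This gives exactly $N$ admissible pairs, hence $\abs{M(\Lambda)}=N$; and $0\notin M(\Lambda)$ because $5^{2N-1}$ is not a perfect square.

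The one substantive ingredient is the counting in the second paragraph. It can be done via Jacobi's formula $r_2(n)=4\sum_{d\mid n}\chi_{-4}(d)$, or, perhaps more transparently, via the factorization $5^{2N-1}=(2+i)^{2N-1}(2-i)^{2N-1}$ in $\Z[i]$: its divisors up to units are the $2N$ products $(2+i)^{j}(2-i)^{2N-1-j}$ with $0\le j\le 2N-1$, which (since $2N-1$ is odd) split into $N$ conjugate pairs, none of them associate to its own conjugate and no two of them associate, yielding $N$ distinct unordered representations. The remaining work — the reduction to \eqref{eq:kernel_equation_arbitrary}, the verification that $\Lambda\in\mcU$, and the elementary check that the degenerate cases $a=0$ and $\abs{a}=\abs{b}$ cannot occur — is bookkeeping already implicit in the discussion preceding the lemma.
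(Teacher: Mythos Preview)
Your proof is correct and follows essentially the same route as the paper: reduce to the Diophantine equation $(2n)^2+(2m-1)^2=H$ and take $H=p^{2N-1}$ for a prime $p\equiv 1\pmod 4$ (you specialize to $p=5$). The only real difference is that the paper cites \cite{Grosswald1985} for the representation count, whereas you supply a self-contained argument via Jacobi's formula (equivalently, factorization in $\Z[i]$); you are also somewhat more explicit about the well-definedness of $l(n,\alpha)$ and the exclusion of $n=0$, which the paper handles in a sentence.
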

        \begin{proof}
            When $\kappa=\pi$ and $\alpha = -\pi^2 H/4$ for an odd number $H \in \N$, \eqref{eq:kernel_equation_arbitrary} becomes the Diophantine equation
            \begin{equation}
                \label{eq:kernel_equation_pi}
                (2n)^2 + (2m-1)^2 = H.
            \end{equation}
            The size of the kernel then corresponds to the number of representations of $H$ as the sum of two squares. As long as $H$ is not a square number, any such representation has $n \neq 0$ (see \Cref{remark:nontrivial_solutions}).
            
            In order to conclude, we therefore need to find an odd non-square number $H$ such that $H$ has exactly $N$ representations as a sum of squares. By \cite[Theorem 3 in Chapter 2]{Grosswald1985}, this is the case for instance when $H = p^{2N-1}$ for a prime $p \in 4\N +1$.
        \end{proof}
        
        Some examples of \Cref{lemma:arbitrary_kernel_pi} are listed below, for various choices of $H$ in \eqref{eq:kernel_equation_pi}. The values of $H$ used in second and third example, which are the smallest possible, are not in the form $p^{2N-1}$. They can easily be deduced by using the general formula given in \cite{Grosswald1985}.
        
        \begin{example}[Higher-dimensional kernels]
            \leavevmode
            \begin{enumerate}[(i)]
                \item The choice $H = 5^{2\cdot 3 -1} = 3125$ yields a three-dimensional kernel, with $M = \{5,19,25\}$. However, this is not the smallest example:
                \item Since $325=6^2 + 17^2 = 10^2 +15^2 = 18^2 + 1^2$ (with no other representations), the choice $H=325$ yields a three-dimensional kernel, with $M=\{3, 5, 9\}$.
                \item Since $1105 = 4^2+33^2=12^2+31^2 =24^2+23^2=32^2+9^2$, the choice $H=1105$ yields a four-dimensional kernel, with $M=\{2, 6, 12, 16\}$.
            \end{enumerate}
        \end{example}
        
        Let $\Q_\textnormal{o}^+$ denote the set of positive rational numbers with odd numerators when reduced to lowest terms. We can then generalize \Cref{lemma:arbitrary_kernel_pi} in the following way:
        
        \begin{theorem}[Arbitrary kernel]
            \label{theorem:arbitrary_kernel}
            For $\kappa \in \pi \Q_\textnormal{o}^+$ and any $N \in \N$, there exist $\Lambda \in \mcU$ such that $\abs{M(\Lambda)}=N$ and $0 \notin M(\Lambda)$.
        \end{theorem}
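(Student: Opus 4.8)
The plan is to reduce to the case $\kappa=\pi$, which is \Cref{lemma:arbitrary_kernel_pi}, by a divisibility argument. Write $\kappa=\pi p/q$ with coprime $p,q\in\N$ and $p$ odd --- this is exactly what $\kappa\in\pi\Q_\textnormal{o}^+$ says. Fix a prime $p_0\equiv1\pmod4$ and set $\alpha\coloneqq-\tfrac14\pi^2p^2p_0^{2N-1}$, which is negative. As in the discussion preceding \Cref{lemma:arbitrary_kernel_pi}, pick $\lambda\in(\pi/2,\pi)$ and a suitable $\mu\neq0$ so that $\Lambda\coloneqq(\mu,\alpha,\lambda)\in\mcU$ and $r(\Lambda)=0$; then $n\in M(\Lambda)$ if and only if $(2n\kappa/\pi)^2+(2m-1)^2=4\abs{\alpha}/\pi^2$ for some $m\in\N$. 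Since $4\abs{\alpha}/\pi^2=p^2p_0^{2N-1}$ and $\kappa/\pi=p/q$, clearing the denominator $q$ turns this into
\[
    (2np)^2+\bigl(q(2m-1)\bigr)^2=p^2q^2p_0^{2N-1}.
\]

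The next step is to analyze the integer solutions of this equation. Reducing it modulo $p^2$, respectively modulo $q^2$, and using $\gcd(p,q)=1$ forces $p\mid2m-1$ and $q\mid2n$. Because $p$ is odd, the integer $(2m-1)/p$ is again odd, so I may write $2m-1=p(2\tilde m-1)$ with $\tilde m\in\N$; writing also $2n=qu$ with $u\in\N_0$ and substituting, the factor $p^2q^2$ cancels and the equation reduces to $u^2+(2\tilde m-1)^2=p_0^{2N-1}$. As the right-hand side is odd, $u$ is even, say $u=2\tilde n$; and as $p_0^{2N-1}$ is not a perfect square (since $2N-1$ is odd), $u\neq0$, so $\tilde n\geq1$. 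Conversely, any pair $(\tilde n,\tilde m)\in\N\times\N$ with $(2\tilde n)^2+(2\tilde m-1)^2=p_0^{2N-1}$ arises this way via $n=q\tilde n$ and $2m-1=p(2\tilde m-1)$, and this sets up a bijection between the two solution sets. In particular every element of $M(\Lambda)$ is of the form $q\tilde n\geq1$, so $0\notin M(\Lambda)$, and since $n$ determines $\tilde n$ we obtain
\[
    \abs{M(\Lambda)}=\#\bigl\{\tilde n\in\N:(2\tilde n)^2+(2\tilde m-1)^2=p_0^{2N-1}\text{ for some }\tilde m\in\N\bigr\},
\]
which, by the proof of \Cref{lemma:arbitrary_kernel_pi} applied with $H=p_0^{2N-1}$ (where this count is identified, using \cite{Grosswald1985}, with the number of representations of $p_0^{2N-1}$ as a sum of two squares), equals $N$.

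I expect the only delicate point to be the bookkeeping with the factors $p$ and $q$: one must check that the substitution neither loses nor creates solutions, and in particular see where the odd-numerator hypothesis enters. It is needed precisely in the step ``$(2m-1)/p$ is odd'', which is what keeps the reduced equation in the shape $(\text{even})^2+(\text{odd})^2=(\text{odd})$, so that \Cref{lemma:arbitrary_kernel_pi} applies to it verbatim. A small additional point is that $q$ may be even, so $u=2n/q$ need not be even a priori; this is harmless, since the reduced equation forces $u$ to be even in any case. The remaining ingredients --- that $r(\Lambda)$ can be made zero with $\Lambda\in\mcU$, and the count of representations of $p_0^{2N-1}$ as a sum of two squares --- are already available, respectively from the discussion preceding \Cref{lemma:arbitrary_kernel_pi} and from \cite{Grosswald1985}.
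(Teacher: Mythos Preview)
Your proof is correct and follows essentially the same route as the paper's: write $\kappa=\pi p/q$ with $p$ odd and $\gcd(p,q)=1$, choose $\alpha=-\tfrac14\pi^2 p^2 H$ with $H=p_0^{2N-1}$ for a prime $p_0\equiv 1\pmod 4$, and establish a bijection between solutions of the resulting Diophantine equation and solutions of the $\kappa=\pi$ case via the divisibility argument $p\mid 2m-1$, $q\mid 2n$. Your treatment of the two subtle points---that the odd-numerator hypothesis is exactly what makes $(2m-1)/p$ odd, and that parity of $u=2n/q$ is forced by the reduced equation regardless of the parity of $q$---matches the paper's handling.
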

        \begin{proof}
            Write $\kappa=\pi r/s$, with $r$ and $s$ coprime. When $\alpha = -\pi^2 r^2 H/4$, \eqref{eq:kernel_equation_arbitrary} becomes
            \begin{equation}
                \label{eq:kernel_equation_dense}
                r^2(2n)^2 + s^2 (2m-1)^2 = r^2 s^2 H.
            \end{equation}
            Choosing $H = p^{2N-1}$ for a prime $p \in 4\N +1$, we know that \eqref{eq:kernel_equation_pi} has exactly $N$ solutions $(\tilde{m}_j,\tilde{n}_j)$ in $\N^2$. The pairs $(m_j, n_j) \in \N^2$ defined by
            \[
                2m_j -1 = r (2\tilde{m}_j-1), \quad n_j = s \tilde{n}_j
            \]
            then solve \Cref{eq:kernel_equation_dense}.
            
            Moreover, these are the only solutions: Suppose that $(m,n)$ solves \Cref{eq:kernel_equation_dense}. Then $r \mid (2m-1)$ and $s \mid 2n$, by coprimality of $r$ and $s$. It follows that $2m-1 = r(2\tilde{m}-1)$ and $2n = s \hat{n}$, where $\tilde{m}$ and $\hat{n}$ solve $\hat{n}^2 + (2\tilde{m}-1)^2 = H$. Since $H$ is odd, $\hat{n}=2\tilde{n}$. Uniqueness in \Cref{eq:kernel_equation_pi} now yields the result.
        \end{proof}
        
        Although we provide kernels of arbitrary dimension in \Cref{theorem:arbitrary_kernel}, the corresponding triples $\Lambda$ satisfy $r(\Lambda)=0$, unlike the three-dimensional kernels obtained in \cite{Ehrnstroem2015}. In particular, this means that the two-dimensional bifurcation result in \Cref{2dBif} does not apply for the kernels from \Cref{theorem:arbitrary_kernel} with $N=2$. We remark that an obstacle for higher-dimensional bifurcation is that there are only four parameters to work with, namely $\Lambda$ and $\kappa$. This may be remedied by for instance including surface tension.
        
        An application of \Cref{theorem:arbitrary_kernel} is one-dimensional bifurcation with several different wave numbers for fixed $\Lambda$. If the set $M(\Lambda) = \{n_1,\ldots,n_N\}$ is such $n_i \nmid n_j$ for all $i\neq j$, we can make restrictions to each $X^{(n_i)}$ and then apply \Cref{1dBif}. This will yield $N$ different solution curves. Two examples for which the condition on $M$ is fulfilled are $H=725$ and $H=3145$, corresponding to $M=\{5,7,13\}$ and $M=\{18,24,26,28\}$, respectively.
    
    \subsection{Lyapunov--Schmidt reduction} \label{LSred}
        Before we can reduce \eqref{eq:F_problem} to a finite-dimensional problem by applying the Lyapunov--Schmidt reduction, we need the following result from elliptic theory.
        \begin{theorem}[Fredholm property \cite{Ehrnstroem2011}] \label{mcLisFred}
            The operator $\mcL(\Lambda)$ is Fredholm for each $\Lambda \in \mcU$, with index $0$. The range of $\mcL(\Lambda)$ is the orthogonal complement of
            \[
                Z\coloneqq \brac[\big]{ (\eta_{\phi},\phi): \phi \in \ker \mcL(\Lambda)} \subset \tilde{X} \subset Y
            \]
            in $Y$ with respect to the inner product
            \begin{equation}
                \label{eq:inner_product_Y}
                \ip{w_1}{w_2}_Y =  \smashoperator{\int_{0}^{2 \pi/\kappa}} \eta_1 \eta_2 \dx + \int\limits_0^1\,\smashoperator{\int_0^{2 \pi/\kappa}} \hat{\phi}_1 \hat{\phi}_2 \dx \ds, \quad w_j \in Y.
            \end{equation}
            Let $\tilde w_n \coloneqq (\eta_{\phi_n}, \phi_n)$ for $n \in M(\Lambda)$, where $\phi_n$ and $M(\Lambda)$ are as in \Cref{kernelLemma}. Then the projection $\Pi_Z:Y \to Z$ onto $Z$ along $\ran \mcL(\Lambda)$ is given by
            \begin{equation}
                \label{PiZ}
                \Pi_Z w = \sum_{n \in M(\Lambda)} \frac{\ip{w}{\tilde w_n}_Y}{\norm{\tilde w_n}_Y^2} \tilde w_n.
            \end{equation}
        \end{theorem}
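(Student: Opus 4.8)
The plan is to read off $\mcL(\Lambda)$ as a classical elliptic boundary value problem on the periodic strip $\hat\Omega$ and then identify its cokernel by hand. The interior operator $\mcL_2(\Lambda) = \Delta - \alpha$ is uniformly elliptic, the condition on the bottom $\hat B$ built into $\tilde X_2$ is Dirichlet, and on the surface $\hat S$ the operator $\mcL_1(\Lambda)$ in \eqref{mcLExp} is an oblique (Robin) boundary operator whose coefficient of the normal derivative is $\psi_{0s}(1) = \psi_{0y}(1) = -\mu\abs{\alpha}^{1/2}\sin(\lambda)$. Since $\Lambda \in \mcU$ forces $\mu \neq 0$ and $\sin(\lambda) > 0$, this coefficient is nonzero---which is exactly the Shapiro--Lopatinski\u{\i} (complementing) condition for the pair, and is the reason \eqref{psiAssumption} was imposed. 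Standard Schauder / Agmon--Douglis--Nirenberg theory for mixed Dirichlet--oblique problems in Hölder spaces then yields that $\mcL(\Lambda)\colon\tilde X_2 \to Y$ has closed range and is Fredholm. For the index, one may either deform the boundary coefficient $\psi_{0ss}(1) + 1/\psi_{0s}(1)$ continuously to $0$ through admissible oblique conditions (keeping $\psi_{0s}(1) \neq 0$, hence the index constant) so as to land on the formally self-adjoint Dirichlet--Neumann problem, which has index $0$; or, more concretely, expand in the horizontal Fourier basis $\{\cos(n\kappa x)\}$, which decouples $\mcL(\Lambda)$ into the two-point boundary value problems \eqref{anDE}--\eqref{eq:an_robin}, each of Fredholm index $0$ and invertible for $n \notin M(\Lambda)$, the finiteness of $M(\Lambda)$ from \Cref{kernelLemma} assembling these into index $0$. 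This step---in particular invoking the correct elliptic regularity theory, or controlling the Hölder norms of the reassembled Fourier solution---is the main technical obstacle; the rest is bookkeeping.

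With Fredholmness in hand, I would next establish the inclusion $\ran\mcL(\Lambda) \subseteq Z^\perp$, i.e.\ $\ip{\mcL(\Lambda)\phi}{\tilde w_n}_Y = 0$ for every $\phi \in \tilde X_2$ and $n \in M(\Lambda)$, where $\tilde w_n = (\eta_{\phi_n},\phi_n)$. Writing out \eqref{eq:inner_product_Y} and applying Green's second identity to $\mcL_2(\Lambda)\phi = (\Delta-\alpha)\phi$ against $\phi_n$ on $\hat\Omega$,
\[
    \int_0^1\!\!\int_0^{2\pi/\kappa}\!(\Delta-\alpha)\phi\,\phi_n\dx\ds = \int_0^1\!\!\int_0^{2\pi/\kappa}\!\phi\,(\Delta-\alpha)\phi_n\dx\ds + \int_0^{2\pi/\kappa}\!\brak*{\phi_s\phi_n - \phi\,\phi_{ns}}_{s=1}\dx,
\]
the contribution from $\hat B$ vanishes because $\phi$ and $\phi_n$ vanish there, and the first term on the right vanishes because $(\Delta-\alpha)\phi_n = 0$. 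On the other hand, $\eta_{\phi_n} = -\phi_n|_{s=1}/\psi_{0s}(1)$, so the surface term $\ip{\mcL_1(\Lambda)\phi}{\eta_{\phi_n}}_{Y_1}$ produces a summand $-\int \brak*{\phi_s\phi_n}_{s=1}\dx$ that cancels the one just created, and collecting what remains leaves an integral of $\phi|_{s=1}$ against $\brak*{\psi_{0s}\phi_{ns} - (\psi_{0ss}+1/\psi_{0s})\phi_n}_{s=1}/\psi_{0s}(1)$, which is zero because $\phi_n$ satisfies the Robin condition \eqref{eq:an_robin}, i.e.\ $\mcL_1(\Lambda)\phi_n = 0$. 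Hence $\ran\mcL(\Lambda) \subseteq Z^\perp$.

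Finally I would upgrade this to an equality and deduce \eqref{PiZ}. The functions $\phi_n$, $n \in M(\Lambda)$, are linearly independent---they lie in distinct horizontal Fourier modes, by \Cref{kernelLemma}---so the $\tilde w_n$ are as well, giving $\dim Z = \abs{M(\Lambda)}$; and since the index is $0$, the codimension of $\ran\mcL(\Lambda)$ equals $\dim\ker\mcL(\Lambda) = \abs{M(\Lambda)}$ too. A closed subspace contained in $Z^\perp$ with the same finite codimension must coincide with it, so $\ran\mcL(\Lambda) = Z^\perp$, and therefore the projection $\Pi_Z$ onto $Z$ along $\ran\mcL(\Lambda)$ is precisely the $Y$-orthogonal projection onto $Z$. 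Because $\cos(m\kappa x)$ and $\cos(n\kappa x)$ are orthogonal for $m \neq n$, both terms of \eqref{eq:inner_product_Y} vanish when evaluated on $\tilde w_m$ and $\tilde w_n$, so the $\tilde w_n$ form an orthogonal system; the orthogonal projection onto their span is then given by the stated formula \eqref{PiZ}, which completes the proof.
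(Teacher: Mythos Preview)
The paper does not supply its own proof of this theorem; it is quoted verbatim from \cite{Ehrnstroem2011} and used as a black box, so there is no proof in the paper to compare your approach against. That said, your outline is sound and is essentially the argument one would expect: the operator $\mcL(\Lambda)$ is a uniformly elliptic operator with Dirichlet data on $\hat B$ and an oblique boundary condition on $\hat S$ whose leading coefficient $\psi_{0s}(1)$ is nonzero by the standing assumption \eqref{psiAssumption}, so the Lopatinski\u{\i} condition holds and standard Schauder theory gives the Fredholm property; your Green's-identity computation showing $\ran\mcL(\Lambda)\subseteq Z^\perp$ is correct, the dimension count via index $0$ upgrades the inclusion to equality, and the mutual $Y$-orthogonality of the $\tilde w_n$ (coming from the orthogonality of the horizontal modes $\cos(n\kappa x)$) yields the projection formula \eqref{PiZ}. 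The only place to be careful is the passage from ``both closed subspaces have codimension $\lvert M(\Lambda)\rvert$'' to equality: you are implicitly using that $Y=Z\oplus Z^\perp$, which holds because $\ip{\cdot}{\cdot}_Y$ is a genuine inner product on the finite-dimensional space $Z$, but it is worth stating since $Y$ is not itself a Hilbert space.
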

        \begin{remark}
            \label{rem:fredholm}
            That $D_w \mcF(0,\Lambda)$ be Fredholm is the main reason for making the assumption \eqref{psiAssumption}. When we have equality in \eqref{psiAssumption}, \eqref{DF1w} reduces to $D_w \mcF_1(0,\Lambda)w = \eta$, whence $\ran{D_w\mcF_1(0,\Lambda)=X_1}$. The operator $D_w \mcF(0,\Lambda)$ then cannot be Fredholm, since $X_1$ is not closed in $Y_1$.
        \end{remark}
        
        Let $\Lambda^* \in \mcU$ be a triple $(\mu^*,\alpha^*,\lambda^*)$ such that $N \coloneqq \abs{M(\Lambda^*)}\geq 1$. Then \Cref{kernelLemma} says that the pairs
        \[
            w_n^* \coloneqq \mcT(\Lambda^*)\phi_n^* \in X, \quad n \in M(\Lambda^*),
        \]
        span the kernel of $D_w\mcF(0,\Lambda^*)$. Since the kernel is finite-dimensional, there exists a closed subspace $X_0 \subset X$ such that
        \[
            X = \ker D_w\mcF(0,\Lambda^*) \oplus X_0.
        \]
        
        By \Cref{mcLisFred}, we can also decompose $Y$ into the direct sum
        \[
            Y = Z \oplus \ran \mcL(\Lambda^*),
        \]
        where $Z \coloneqq \lspan{\{ \tilde{w}_n^* \}_{n \in M(\Lambda^*)}}$, since these are orthogonal complements in the inner product \eqref{eq:inner_product_Y} on $Y$. Applying the Lyapunov--Schmidt reduction (see e.g. Kielhöfer \cite{Kielhoefer2012}) for these decompositions of $X$ and $Y$, we obtain the following lemma.
        \begin{lemma}[Lyapunov--Schmidt]
            \label{LyaSch}
            There exist open neighborhoods $\mc{N}$ of $0$ in $\ker D_w \mcF(0,\Lambda^*)$, $\mc{M}$ of $0$ in $X_0$, and $\mcU'$ of $\Lambda^*$ in $\mcU$, and a uniquely determined function $\psi: \mc{N} \times \mcU' \to  \mc{M}$ such that
            \[
                \mcF(w,\Lambda) = 0 \qquad \text{for } w \in \mc{N}+\mc{M}, \, \Lambda \in \mcU',
            \]
            if and only if $w = w^*+\psi(w^*,\Lambda)$ and $w^* = \sum_{n \in M(\Lambda^*)} t_n w_n^* \in \mc{N}$ solves the finite-dimensional problem
            \[
                \Phi(t,\Lambda) = 0 \qquad \text{for } t \in \mc{V}, \, \Lambda \in \mcU',
            \]
            where
            \[
                \Phi(t,\Lambda) \coloneqq \Pi_Z \mcF(w,\Lambda) \quad \text{and} \quad \mc{V} \coloneqq \brac*{ (t_n)_{n \in M(\Lambda^*)} \in \R^N : w^* \in \mc{N}}.
            \]
            
            The function $\psi$ is smooth, and satisfies $\psi(0,\Lambda) = 0$ for all $\Lambda \in \mcU'$, and $D_w \psi(0,\Lambda^*) = 0$.
        \end{lemma}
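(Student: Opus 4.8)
The plan is to run the standard Lyapunov--Schmidt reduction (as in Kielhöfer \cite{Kielhoefer2012}) for the decompositions $X = \ker D_w\mcF(0,\Lambda^*) \oplus X_0$ and $Y = Z \oplus \ran\mcL(\Lambda^*)$ already established above, the latter being available by \Cref{mcLisFred}, and then to read off the three asserted properties of $\psi$ directly from the implicit function theorem. Write $\Pi \coloneqq \mathrm{id}_Y - \Pi_Z$ for the bounded projection of $Y$ onto $\ran\mcL(\Lambda^*)$ along $Z$, with $\Pi_Z$ as in \eqref{PiZ}. For $w$ near $0$ in $X$, split $w = w^* + v$ with $w^* \in \ker D_w\mcF(0,\Lambda^*)$ and $v \in X_0$; then $\mcF(w,\Lambda) = 0$ is equivalent to the pair consisting of the \emph{auxiliary equation} $\Pi\mcF(w^*+v,\Lambda) = 0$ and the \emph{bifurcation equation} $\Pi_Z\mcF(w^*+v,\Lambda) = 0$.

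First I would solve the auxiliary equation for $v$ as a function of $(w^*,\Lambda)$ via the implicit function theorem. The map $(w^*,v,\Lambda) \mapsto \Pi\mcF(w^*+v,\Lambda)$ is smooth near $(0,0,\Lambda^*)$, takes values in $\ran\mcL(\Lambda^*)$, and vanishes at $(0,0,\Lambda^*)$ because the trivial solution gives $\mcF(0,\Lambda^*) = 0$ (see \Cref{Laminar}, \eqref{eq:trivial_solution_constants} and \Cref{mcELem}). Its partial derivative in $v$ at that point is $\Pi\, D_w\mcF(0,\Lambda^*)\big|_{X_0}$. Here I would transport information from $\mcL$ to $D_w\mcF$: since $\mcT(\Lambda^*)$ is an isomorphism onto $X$ by \Cref{mcTProp} and $\mcL(\Lambda^*) = D_w\mcF(0,\Lambda^*)\mcT(\Lambda^*)$, we have $\ran D_w\mcF(0,\Lambda^*) = \ran\mcL(\Lambda^*)$ and $\ker D_w\mcF(0,\Lambda^*) = \mcT(\Lambda^*)(\ker\mcL(\Lambda^*))$; hence $D_w\mcF(0,\Lambda^*)$ restricts to an isomorphism $X_0 \to \ran\mcL(\Lambda^*)$ (bijectivity is elementary, boundedness of the inverse follows from the open mapping theorem since $\ran\mcL(\Lambda^*)$ is closed by \Cref{mcLisFred}), and on $\ran\mcL(\Lambda^*)$ the projection $\Pi$ acts as the identity. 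The implicit function theorem then supplies neighborhoods $\mc{N}$ of $0$ in $\ker D_w\mcF(0,\Lambda^*)$, $\mc{M}$ of $0$ in $X_0$ and $\mcU'$ of $\Lambda^*$ in $\mcU$ (shrunk, if need be, so that $\mc{N}+\mc{M}\subset\mcO$), together with a unique smooth $\psi\colon \mc{N}\times\mcU' \to \mc{M}$ such that $v = \psi(w^*,\Lambda)$ is the only solution of the auxiliary equation lying in $\mc{M}$.

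Substituting this back leaves exactly the bifurcation equation: for $w = w^*+v \in \mc{N}+\mc{M}$ and $\Lambda \in \mcU'$, one has $\mcF(w,\Lambda) = 0$ if and only if $v = \psi(w^*,\Lambda)$ and $\Phi(t,\Lambda) \coloneqq \Pi_Z\mcF(w,\Lambda) = 0$, where $w = w^* + \psi(w^*,\Lambda)$ and $w^* = \sum_{n \in M(\Lambda^*)} t_n w_n^*$; because $\{w_n^*\}_{n\in M(\Lambda^*)}$ is a basis of $\ker D_w\mcF(0,\Lambda^*)$ by \Cref{kernelLemma}, the set $\mc{V}$ is an open neighborhood of $0$ in $\R^N$. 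The three remaining assertions I would then dispatch as follows. Smoothness of $\psi$ is part of the implicit function theorem. Since the constants $Q,m_0,m_1$ entering \eqref{mcFDef} are precisely those of the trivial solution, $\mcF(0,\Lambda) = 0$ for every $\Lambda$, so $v = 0$ solves the auxiliary equation at each $\Lambda \in \mcU'$ and uniqueness forces $\psi(0,\Lambda) = 0$. Finally, differentiating $\Pi\mcF\paren[\big]{w^* + \psi(w^*,\Lambda^*),\Lambda^*} = 0$ in $w^*$ at $w^* = 0$ gives $\Pi\, D_w\mcF(0,\Lambda^*)\paren[\big]{\mathrm{id} + D_w\psi(0,\Lambda^*)} = 0$ on $\ker D_w\mcF(0,\Lambda^*)$; as $D_w\mcF(0,\Lambda^*)$ annihilates this kernel while restricting to an isomorphism of $X_0$ onto $\ran\mcL(\Lambda^*)$, and $D_w\psi(0,\Lambda^*)$ takes values in $X_0$, we conclude $D_w\psi(0,\Lambda^*) = 0$.

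I do not expect a real obstacle: the argument is the classical reduction. The only steps that genuinely need attention are the two bookkeeping points used above — transporting the Fredholm and kernel data from $\mcL(\Lambda^*)$ through the isomorphism $\mcT(\Lambda^*)$ so that the linearized auxiliary operator is actually invertible between the chosen complements, and observing that $\mcF$ vanishes along the whole trivial family, which is exactly what makes $\psi(0,\cdot)$ and $D_w\psi(0,\Lambda^*)$ vanish. No estimates beyond the implicit function theorem are required.
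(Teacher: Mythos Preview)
Your proposal is correct and follows precisely the approach the paper takes: the paper simply states that the lemma is obtained by applying the standard Lyapunov--Schmidt reduction from \cite{Kielhoefer2012} to the given decompositions, without writing out any details. You have spelled out exactly that argument, including the bookkeeping of transporting the Fredholm data through $\mcT(\Lambda^*)$ and reading off the properties of $\psi$ from the implicit function theorem, so there is nothing to add.
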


\section{One-dimensional bifurcation} \label{1dBifSection}
    We are now in a position to show that a curve of nontrivial solutions of \eqref{eq:F_problem} bifurcates from each point $(0,\Lambda^*) \in X \times \mcU$ where the kernel of $D_w \mcF(0,\Lambda^*)$ is one-dimensional, given that $\Lambda^*$ satisfies an additional technical condition. This condition comes from \Cref{IpCalcLemmalambda} below.
    
    \begin{lemma}[Orthogonality]
        \label{IpCalcLemmalambda}
        Suppose that $n \in M(\Lambda)$, so that the function $\phi_n$ given by \eqref{phik} lies in $\ker \mcL(\Lambda)$. Then, if $\tilde w_n \coloneqq (\eta_{\phi_n},\phi_n)$ is the corresponding basis function of $Z$, we have
        \begin{equation}
            \label{eq:transversality_inner_product}
            \ip{D_{\lambda} \mcL(\Lambda) \phi_n}{\tilde{w}_n}_Y = A \paren*{ \frac{\sinh(\theta_n)}{\theta_n}}^2,
        \end{equation}
        where
        \[
            A \coloneqq -\frac{2\pi}{\kappa \psi_{0s}(1)^2} \brak[\bigg]{\cot(\lambda) + \frac{\mu^2 |\alpha|^{3/2}}{2}}
        \]
        does not depend on $n$. In particular,
        \[
            \ip{D_{\lambda} \mcL(\Lambda) \phi_n}{\tilde{w}_n}_Y = 0 \quad \text{if and only if}\quad \cot(\lambda) = -\frac{\mu^2 |\alpha|^{3/2}}{2}.
        \]
    \end{lemma}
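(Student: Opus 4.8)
The plan is to compute the left-hand side of \eqref{eq:transversality_inner_product} directly from the explicit formula \eqref{mcLExp} for $\mcL(\Lambda)$. Write $p \coloneqq \psi_{0s}(1) = -\mu\abs{\alpha}^{1/2}\sin(\lambda)$ and $q \coloneqq \psi_{0ss}(1) = -\mu\abs{\alpha}\cos(\lambda)$; these, and their $\lambda$-derivatives, can be read off directly from \eqref{Laminar}. The basis function $\phi_n$ from \eqref{phik} does not depend on $\lambda$, and the operator $\Delta-\alpha$ in the second component of \eqref{mcLExp} does not involve $\lambda$ either, so differentiating \eqref{mcLExp} in $\lambda$ leaves only the surface term:
\[
    D_\lambda\mcL(\Lambda)\phi_n = \paren*{ \brak*{ (\partial_\lambda p)\cosh(\theta_n) - \partial_\lambda\paren[\big]{q+\tfrac1p}\tfrac{\sinh(\theta_n)}{\theta_n} }\cos(n\kappa x),\ 0 },
\]
where I have used $\phi_n(\cdot,1) = \cos(n\kappa x)\sinh(\theta_n)/\theta_n$ and $\phi_{ns}(\cdot,1) = \cos(n\kappa x)\cosh(\theta_n)$.

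Next I would invoke the hypothesis $n \in M(\Lambda)$. The kernel equation \eqref{KerEq}---equivalently, \eqref{KEcatter}---says $\theta_n\coth(\theta_n) = r(\Lambda)$, i.e. $\cosh(\theta_n) = r(\Lambda)\sinh(\theta_n)/\theta_n$. Substituting this, $\sinh(\theta_n)/\theta_n$ factors out of the surface term, which becomes $\tfrac{\sinh(\theta_n)}{\theta_n}\paren[\big]{(\partial_\lambda p)\,r(\Lambda) - \partial_\lambda(q+\tfrac1p)}\cos(n\kappa x)$. The step I expect to be the crux is the identity
\[
    q + \frac1p = p\, r(\Lambda),
\]
which holds identically for $\Lambda \in \mcU$: it is exactly the relation that converts \eqref{KEcatter} into \eqref{KerEq}, and it can also be checked straight from \eqref{Laminar} and \eqref{eq:kernel_equation_rhs}. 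Differentiating it in $\lambda$ gives $\partial_\lambda(q+\tfrac1p) = (\partial_\lambda p)\,r(\Lambda) + p\,\partial_\lambda r(\Lambda)$, so the parenthesis above collapses to $-p\,\partial_\lambda r(\Lambda)$, and hence
\[
    D_\lambda\mcL(\Lambda)\phi_n = \paren*{ -p\,\partial_\lambda r(\Lambda)\,\frac{\sinh(\theta_n)}{\theta_n}\cos(n\kappa x),\ 0 }.
\]

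Forming the inner product \eqref{eq:inner_product_Y} with $\tilde w_n = (\eta_{\phi_n},\phi_n)$ is then routine: the second component of $D_\lambda\mcL(\Lambda)\phi_n$ vanishes, so only the surface integral contributes, and with $\eta_{\phi_n} = -\tfrac1p\,\tfrac{\sinh(\theta_n)}{\theta_n}\cos(n\kappa x)$ from \Cref{mcTProp} and $\int_0^{2\pi/\kappa}\cos^2(n\kappa x)\dx = \pi/\kappa$ one obtains
\[
    \ip{D_\lambda\mcL(\Lambda)\phi_n}{\tilde w_n}_Y = \frac{\pi}{\kappa}\,\partial_\lambda r(\Lambda)\paren*{\frac{\sinh(\theta_n)}{\theta_n}}^2,
\]
so $A = \tfrac{\pi}{\kappa}\,\partial_\lambda r(\Lambda)$, which is manifestly independent of $n$.

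To conclude, I would differentiate $r$ from \eqref{eq:kernel_equation_rhs} and rearrange, obtaining $\partial_\lambda r(\Lambda) = -\tfrac{2}{\mu^2\abs{\alpha}\sin^2(\lambda)}\paren[\big]{\cot(\lambda)+\tfrac{\mu^2\abs{\alpha}^{3/2}}{2}}$; since $\psi_{0s}(1)^2 = \mu^2\abs{\alpha}\sin^2(\lambda)$, this is precisely the asserted expression for $A$. The final assertion is then immediate: the prefactor $-\tfrac{2\pi}{\kappa\psi_{0s}(1)^2}$ is nonzero by \eqref{psiAssumption}, and $\sinh(\theta_n)/\theta_n \neq 0$ as already noted in the proof of \Cref{kernelLemma}, so the inner product vanishes exactly when $\cot(\lambda) = -\tfrac{\mu^2\abs{\alpha}^{3/2}}{2}$. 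The only mildly delicate point in this plan is spotting the identity $q+\tfrac1p = p\,r(\Lambda)$, which turns an otherwise tedious trigonometric simplification into a one-line differentiation; everything else is bookkeeping.
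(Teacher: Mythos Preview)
Your proof is correct and follows essentially the same approach as the paper: differentiate \eqref{mcLExp} in $\lambda$, use the kernel equation to replace $\theta_n\coth(\theta_n)$ by $r(\Lambda)$, and compute the resulting surface integral against $\eta_{\phi_n}$. Your use of the identity $q+\tfrac1p=p\,r(\Lambda)$ and its $\lambda$-derivative to recognise $A=\tfrac{\pi}{\kappa}\,\partial_\lambda r(\Lambda)$ is a slight streamlining of the trigonometric simplification that the paper carries out directly from the derivatives of $\psi_0$, but the route is the same.
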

    \begin{proof}
        Recalling \eqref{mcLExp}, we find the derivative
        \[
            D_\lambda \mcL(\Lambda)\phi = \paren*{\psi_{0s\lambda}(1)\phi_s|_{s=1} - \paren*{\psi_{0ss\lambda}(1) - \frac{\psi_{0s\lambda}(1)}{\psi_{0s}(1)^2}}\phi|_{s=1},0}.
        \]
        Using that $\phi_n(x,s) = \cos(n \kappa x) \sinh(\theta_n s)/\theta_n$, we get
        \[
            D_\lambda \mcL_1(\Lambda)\phi_n =\tilde{A} \frac{\sinh(\theta_n)}{\theta_n} \cos(n\kappa x),
        \]
        where
        \begin{align*}
            \tilde{A}&\coloneqq\psi_{0s\lambda}(1) l(n,\alpha) - \paren*{ \psi_{0ss\lambda}(1) - \frac{\psi_{0s\lambda}(1)}{\psi_{0s}(1)^2}}\\
            &=\frac{2}{\psi_{0s}(1)}\paren[\bigg]{\cot(\lambda) + \frac{\mu^2 \abs{\alpha}^{3/2}}{2}},
        \end{align*}
        by the kernel equation \eqref{KerEq} and the definition of $\psi_0$.
        
        Since $\tilde w_n = (\eta_{\phi_n},\phi_n)$, with
        \[
            \eta_{\phi_n}(x) = -\frac{\phi_n(x,1)}{\psi_{0s}(1)} = -\frac{1}{\psi_{0s}(1)} \frac{\sinh(\theta_n)}{\theta_n}\cos(n\kappa x),
        \]
        we now find
        \begin{align*}
            \ip{D_{\lambda} \mcL(\Lambda) \phi_n}{\tilde{w}_n}_Y &=\smashoperator{\int_{0}^{2 \pi/\kappa}} \eta_{\phi_n} D_{\lambda} \mcL_1(\Lambda)\phi_n\dx\\
            &= -\frac{\pi\tilde{A}}{\kappa\psi_{0s}(1)}\paren*{ \frac{\sinh(\theta_n)}{\theta_n}}^2,
        \end{align*}
        which is \eqref{eq:transversality_inner_product} with $A = -\pi \tilde{A}/(\kappa \psi_{0s}(1))$.
    \end{proof}
    
    We will refer to
    \begin{equation} \label{lambdaNuisance}
        \cot(\lambda) \neq - \frac{\mu^2 |\alpha|^{3/2}}{2}
    \end{equation}
    as \textit{the transversality condition}, because it corresponds to transversality in the Crandall--Rabinowitz theorem (see \cite{Crandall1971} or \cite{Kielhoefer2012}). Note that all the examples we provided in \Cref{example:explicit_kernels} satisfy this condition. It is straightforward to check that the transversality condition fails at $\Lambda^* \in \mcU$ precisely when $\mu_*'(\lambda^*)=0$ in \Cref{theorem:kernel_local_description}. This means that we can obtain the following by moving slightly along the graph of $\mu_*$ (see \Cref{fig:kernel_geometry}).
    
    \begin{lemma} \label{OrthogonalityLemma}
        Suppose that $\Lambda^* = (\mu^*, \alpha^*, \lambda^*) \in \mcU$ is such that the transversality condition \eqref{lambdaNuisance} fails. Then there are $\mu, \lambda \in \R$ with $\Lambda = (\mu,\alpha^*, \lambda) \in \mcU$ such that the transversality condition holds and $M(\Lambda) = M(\Lambda^*)$. The triple $\Lambda$ can be chosen arbitrarily close to $\Lambda^*$.
    \end{lemma}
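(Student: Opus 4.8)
The plan is to use the explicit local parametrization of the kernel equation from \Cref{theorem:kernel_local_description}. Write $M(\Lambda^*) = \{n_1,\dots,n_N\}$ and let $\mu_*(\lambda)$ be the function from that theorem, defined and analytic on a neighborhood of $\lambda^*$, with $\mu_*(\lambda^*) = \mu^*$. For $\lambda$ near $\lambda^*$ put $\Lambda(\lambda) \coloneqq (\mu_*(\lambda),\alpha^*,\lambda)$. By \Cref{theorem:kernel_local_description} we then have $M(\Lambda(\lambda)) = M(\Lambda^*)$, and $\Lambda(\lambda) \in \mcU$ once $\lambda$ is sufficiently close to $\lambda^*$, since $\mcU$ is open, $\alpha^* < 0$, $\lambda^* \in (0,\pi)$, and $\mu_*$ is continuous with $\mu_*(\lambda^*) = \mu^* \neq 0$. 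Hence it suffices to produce $\lambda$ arbitrarily close to $\lambda^*$ for which the transversality condition \eqref{lambdaNuisance} holds at $\Lambda(\lambda)$.

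First I would record the (elementary) link between transversality and the curve $\mu_*$, which is the equivalence alluded to just before the lemma. Along the curve, the kernel equation \eqref{KerEq} reads $r(\Lambda(\lambda)) = l(n_1,\alpha^*)$, a quantity independent of $\lambda$. Differentiating this identity in $\lambda$ and using that $\partial_\mu r = -2/(\mu^3|\alpha|\sin^2\lambda) \neq 0$, one gets $\mu_*'(\lambda) = 0$ if and only if $(\partial_\lambda r)(\Lambda(\lambda)) = 0$; and a direct computation of $\partial_\lambda r$ shows that this last equation is equivalent to $\cot\lambda = -\mu_*(\lambda)^2|\alpha^*|^{3/2}/2$, i.e. to the failure of \eqref{lambdaNuisance} at $\Lambda(\lambda)$. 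So the task reduces to showing that $\mu_*'$ does not vanish identically on any neighborhood of $\lambda^*$.

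This is the only substantive point, and it follows from non-constancy of $\mu_*$ combined with analyticity. From \eqref{eq:definition_mui}, writing $a \coloneqq |\alpha^*|^{1/2}$ and $c \coloneqq l(n_1,\alpha^*)$,
\[
\frac{1}{\mu_*(\lambda)^2} = a^2\sin^2(\lambda)\paren[\big]{c - a\cot\lambda} = \frac{a^2 c}{2}\paren[\big]{1 - \cos 2\lambda} - \frac{a^3}{2}\sin 2\lambda ,
\]
which is real-analytic and non-constant near $\lambda^*$, because the coefficient $-a^3/2$ of $\sin 2\lambda$ is nonzero (as $\alpha^* \neq 0$). Moreover this expression is positive near $\lambda^*$ — along the curve it equals $1/(\mu^2|\alpha^*|\sin^2\lambda) > 0$ by \eqref{KerEq} — so $\mu_*$ is genuinely well-defined and analytic there. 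A non-constant real-analytic function on a connected interval has a derivative with only isolated zeros; hence there are $\lambda$ as close to $\lambda^*$ as we wish with $\mu_*'(\lambda) \neq 0$. For such $\lambda$, the triple $\Lambda \coloneqq (\mu_*(\lambda),\alpha^*,\lambda)$ lies in $\mcU$, satisfies $M(\Lambda) = M(\Lambda^*)$, and fulfils the transversality condition, which completes the proof.

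I expect the main obstacle to be purely computational: carrying out the differentiation in the second paragraph to verify that $\mu_*'(\lambda) = 0$ is exactly the failure of \eqref{lambdaNuisance}, and checking the closed form for $1/\mu_*^2$ that exhibits its non-constancy. Once these are in hand, the statement is an immediate consequence of \Cref{theorem:kernel_local_description} and the identity theorem for real-analytic functions.
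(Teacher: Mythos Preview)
Your proof is correct and follows exactly the approach the paper indicates in the paragraph immediately preceding the lemma: the transversality condition fails at $\Lambda(\lambda)$ precisely when $\mu_*'(\lambda)=0$, so one moves slightly along the graph of $\mu_*$. The paper does not spell out why $\mu_*'$ cannot vanish identically near $\lambda^*$; your argument via the explicit formula for $1/\mu_*^2$ and real-analyticity fills this gap cleanly.
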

    
    The one-dimensional bifurcation result is an application of the Crandall--Rabinowitz bifurcation theorem. To clarify the proof of the two-dimensional bifurcation in the next section, we will nonetheless spell out the details of the proof.
    \begin{theorem}[One-dimensional bifurcation] \label{1dBif}
        Suppose that $\Lambda^* \in \mcU$ is such that $M(\Lambda^*) = \{n\}$ with $n \in \N$, and therefore that
        \[
            \ker{D_w\mcF(0,\Lambda^*) = \lspan{\{w^*\}}},
        \]
        where $w^* = \mathcal{T}(\Lambda^*) \phi^*$, with $\phi^*\coloneqq \phi_n$ as in \Cref{kernelLemma}. If the transversality condition \eqref{lambdaNuisance} holds, there exists a smooth curve $\{ (\overline{w}(t),\overline{\lambda}(t)): 0 < |t| < \varepsilon \}$ of nontrivial small-amplitude solutions to
        \begin{equation}
            \label{FEq}
            \mcF(w,\mu^*,\alpha^*,\lambda) = 0,
        \end{equation}
        in $\mcO \times (0,\pi)$, passing through $(\overline{w}(0),\overline{\lambda}(0)) = (0, \lambda^*)$, with
        \begin{equation}
            \label{eq:1d_asymptotic}
            \overline{w}(t) = t w^* + O(t^2) \quad \text{in } X \text{ as } t \to 0.
        \end{equation}
        These are all the nontrivial solutions of \eqref{FEq} in a neighborhood of $(0,\lambda^*)$ in $\mcO \times (0,\pi)$.
    \end{theorem}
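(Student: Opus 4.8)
\emph{Proof proposal.} The approach is to carry out the Lyapunov--Schmidt reduction of \Cref{LyaSch} and solve the resulting scalar equation with the implicit function theorem; this amounts to a hands-on proof of the Crandall--Rabinowitz theorem, spelled out here so that it can be reused in \Cref{2dBifSection}. Fix $\mu = \mu^*$ and $\alpha = \alpha^*$, and regard $\mcF$ as a function of $(w,\lambda)$ only; since $\mu^* \neq 0$ and $\alpha^* < 0$, the triples $(\mu^*,\alpha^*,\lambda)$ stay in $\mcU$ for $\lambda$ near $\lambda^* \in (0,\pi)$. The trivial solutions give the branch $\lambda \mapsto (0,\lambda)$ of solutions of \eqref{FEq}, so $\mcF(0,\mu^*,\alpha^*,\lambda) = 0$ and hence $D_\lambda \mcF(0,\mu^*,\alpha^*,\lambda) = 0$ for all such $\lambda$. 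By \Cref{mcLisFred} and \Cref{mcTProp} the operator $D_w\mcF(0,\Lambda^*) = \mcL(\Lambda^*)\mcT(\Lambda^*)^{-1}$ is Fredholm of index $0$ with $\ran D_w\mcF(0,\Lambda^*) = \ran \mcL(\Lambda^*) = Z^\perp$, and by \Cref{kernelLemma} its kernel is $\lspan\{w^*\}$ because $M(\Lambda^*) = \{n\}$; here $Z = \lspan\{\tilde w_n^*\}$.

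Applying \Cref{LyaSch} at $\Lambda^*$ (with $N = 1$) reduces \eqref{FEq} near $(0,\lambda^*)$ to $\Phi(t,\lambda) = 0$, which I write as $\varphi(t,\lambda)\,\tilde w_n^* = 0$ for a smooth scalar $\varphi$, where $w(t,\lambda) \coloneqq tw^* + \psi(tw^*,\lambda)$. Since $\psi(0,\cdot) = 0$, the trivial branch forces $\varphi(0,\cdot) = 0$, so $\varphi(t,\lambda) = t\,\tilde\varphi(t,\lambda)$ for a smooth $\tilde\varphi$, and the nontrivial solutions near $(0,\lambda^*)$ are exactly the zeros of $\tilde\varphi$. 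One then checks the implicit function theorem hypotheses for $\tilde\varphi$ at $(0,\lambda^*)$. Using $\psi(0,\cdot) = 0$ and $D_w\psi(0,\Lambda^*) = 0$, we get $\partial_t w(0,\lambda^*) = w^* \in \ker D_w\mcF(0,\Lambda^*)$, so $\tilde\varphi(0,\lambda^*)\,\tilde w_n^* = \partial_t\Phi(0,\lambda^*) = \Pi_Z D_w\mcF(0,\Lambda^*)w^* = 0$. Differentiating once more and commuting derivatives, the terms containing $\partial_\lambda w(0,\cdot) = \partial_\lambda\psi(0,\cdot) = 0$ or lying in $\ran D_w\mcF(0,\Lambda^*)$ vanish under $\Pi_Z$, leaving
\[
    \partial_\lambda\tilde\varphi(0,\lambda^*) = \frac{\ip{D_\lambda D_w\mcF(0,\Lambda^*)\,w^*}{\tilde w_n^*}_Y}{\norm{\tilde w_n^*}_Y^2}.
\]

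The crux is to identify this numerator with the quantity in \Cref{IpCalcLemmalambda}. Differentiating $\mcL(\Lambda) = D_w\mcF(0,\Lambda)\mcT(\Lambda)$ in $\lambda$ at $\Lambda^*$, and using that $\phi^*$ does not depend on $\lambda$, yields
\[
    D_\lambda\mcL(\Lambda^*)\phi^* = D_\lambda D_w\mcF(0,\Lambda^*)\,w^* + D_w\mcF(0,\Lambda^*)\,D_\lambda\mcT(\Lambda^*)\,\phi^*,
\]
whose last term lies in $\ran D_w\mcF(0,\Lambda^*) = Z^\perp$ and is therefore orthogonal to $\tilde w_n^* \in Z$. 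Hence $\ip{D_\lambda D_w\mcF(0,\Lambda^*)\,w^*}{\tilde w_n^*}_Y = \ip{D_\lambda\mcL(\Lambda^*)\phi^*}{\tilde w_n^*}_Y = A(\sinh(\theta_n)/\theta_n)^2$ by \Cref{IpCalcLemmalambda}, which is nonzero exactly because the transversality condition \eqref{lambdaNuisance} holds (recall also $\sinh(\theta_n)/\theta_n \neq 0$ from \Cref{kernelLemma}). The implicit function theorem then produces a smooth $\overline\lambda(t)$ with $\overline\lambda(0) = \lambda^*$ and $\tilde\varphi(t,\overline\lambda(t)) = 0$, and $\overline w(t) \coloneqq tw^* + \psi(tw^*,\overline\lambda(t))$ is the sought curve.

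It remains to record the asymptotics and the uniqueness statement. Since $\psi$ is smooth with $\psi(0,\cdot) = 0$ and $D_w\psi(0,\Lambda^*) = 0$, a Taylor expansion about the origin shows that $t \mapsto \psi(tw^*,\overline\lambda(t))$ vanishes to second order at $t = 0$, so $\overline w(t) = tw^* + O(t^2)$, which is \eqref{eq:1d_asymptotic}; in particular $\overline w(t) \neq 0$ for $0 < |t| < \varepsilon$, and since $n \in \N$ the first component of $w^*$ (a nonzero multiple of $\cos(n\kappa x)$) depends nontrivially on $x$. Completeness follows from \Cref{LyaSch}: every solution of \eqref{FEq} near $(0,\lambda^*)$ satisfies $t\,\tilde\varphi(t,\lambda) = 0$, hence is either trivial ($t = 0$, i.e.\ $w = 0$) or has $\tilde\varphi(t,\lambda) = 0$, which forces $\lambda = \overline\lambda(t)$ by the uniqueness part of the implicit function theorem. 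I expect the transversality bookkeeping to be the only genuinely delicate step — especially keeping track of the $\lambda$-dependence of $\mcT$ in order to transfer \Cref{IpCalcLemmalambda} from $\mcL$ to $\mcF$; the rest is the standard Lyapunov--Schmidt/Crandall--Rabinowitz routine.
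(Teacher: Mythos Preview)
Your proposal is correct and follows essentially the same route as the paper: Lyapunov--Schmidt reduction to a scalar equation, factoring out $t$ from $\Phi$, verifying $\tilde\varphi(0,\lambda^*)=0$ and $\partial_\lambda\tilde\varphi(0,\lambda^*)\neq 0$ via the product-rule identity relating $D_\lambda\mcL$ and $D_{w\lambda}\mcF$ through $\mcT$, and then invoking \Cref{IpCalcLemmalambda} together with the implicit function theorem. The only cosmetic difference is that the paper writes the factored function explicitly as $\Psi(t,\Lambda)=\int_0^1\Phi_t(tz,\Lambda)\,\mathrm{d}z$, whereas you appeal directly to smoothness and $\varphi(0,\cdot)=0$; both yield the same object and the same derivative computations.
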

    
    \begin{proof}
        Using \Cref{LyaSch}, we know that there exists a neighborhood of $(0,\lambda^*)$ in $\mcO \times (0,\pi)$ for which the equation $\mcF(w,\mu^*,\alpha^*,\lambda) = 0$ is equivalent to $\Phi(t,\mu^*,\alpha^*,\lambda) = 0$, where $t \in \R$. From the same lemma we also have the identity $\Phi(0,\Lambda)=0$, and hence we can write
        \[
            \Phi(t,\Lambda) = \int_0^1 \partial_z \paren[\big]{\Phi(tz,\Lambda)}\dz=t\Psi(t,\Lambda),
        \]
        where
        \begin{equation}
            \label{eq:Psi_definition}
            \Psi(t,\Lambda) \coloneqq \int_0^1 \! \Phi_{t}(tz,\Lambda)\dz
        \end{equation}
        is smooth. For nontrivial solutions ($t \neq 0$), the equations $\Phi=0$ and $\Psi = 0$ are equivalent, whence we need only concern ourselves with the latter equation.
        
        We want to apply the implicit function theorem to $\Psi$, which requires that $\Psi(0,\Lambda^*) = 0$ and $\Psi_\lambda (0,\Lambda^*) \neq 0$ (recall that $Z$ is one-dimensional). Now, from \eqref{eq:Psi_definition}, we find
        \begin{align*}
            \Psi(0,\Lambda^*) &= \Phi_t(0,\Lambda^*),\\
            \Psi_\lambda(0,\Lambda^*) &=  \Phi_{t\lambda}(0,\Lambda^*),
        \end{align*}
        so these are the derivatives of $\Phi$ we need to compute. By the definition of $\Phi$,
        \begin{equation}
            \label{eq:Phi_t}
            \Phi_t(t,\Lambda) = \Pi_Z  D_w \mcF(t w^*+\psi(t w^*, \Lambda),\Lambda) (w^*+D_w\psi(tw^*,\Lambda) w^*),
        \end{equation}
        and so by evaluating in $t=0$, and using the properties of $\psi$ listed in \Cref{LyaSch}, we have
        \[
            \Phi_t(0,\Lambda) = \Pi_Z  D_w \mcF(0,\Lambda) (w^*+D_w\psi(0,\Lambda) w^*),
        \]
        which also yields
        \begin{multline*}
            \Phi_{t\lambda}(0,\Lambda) = \Pi_Z  D_{w\lambda} \mcF(0,\Lambda) (w^*+D_w\psi(tw^*,\Lambda))\\ + \Pi_Z D_w \mcF(0,\Lambda) D_{w\lambda}\psi (w^*,\Lambda).
        \end{multline*}
        
        We now obtain
        \[
            \Psi(0,\Lambda^*)=\Phi_t(0,\Lambda^*) = \Pi_Z D_w \mcF(0,\Lambda^*)w^* = 0,
        \]
        because $D_w\psi(0,\Lambda^*)=0$ by the last part of \Cref{LyaSch}, and because $\Pi_Z$ projects along $\ran D_w\mcF(0,\Lambda^*)$. Similarly,
        \begin{equation}
            \label{eq:Phi_t_lambda}
            \Phi_{t\lambda}(0,\Lambda^*) = \Pi_Z D_{w\lambda} \mcF(0,\Lambda^*)w^*.
        \end{equation}
        
        Note that $D_w \mcF(0,\Lambda)w^* = \mcL(\Lambda)\mcT(\Lambda)^{-1} w^*$, and hence
        \[
            D_{w\lambda}\mcF(0,\Lambda^*)w^* = D_{\lambda} \mcL(\Lambda^*) \phi^* - D_w \mcF(0,\Lambda^*) \partial_\lambda \mcT(\Lambda^*) \phi^*,
        \]
        which implies that \Cref{eq:Phi_t_lambda} can be written
        \[
            \Phi_{t\lambda}(0,\Lambda^*) = \Pi_Z D_\lambda \mcL(\Lambda^*)\phi^*,
        \]
        again using that $\Pi_Z$ projects along $\ran D_w\mcF(0,\Lambda^*)$. We can now use \Cref{IpCalcLemmalambda} to deduce that $\Psi_\lambda(0,\Lambda^*) = \Phi_{t\lambda}(0,\Lambda^*) \neq 0$, due to the assumption of transversality.
        
        Finally, since $\Psi(0,\Lambda^*)=0$ and $\Psi_\lambda(0,\Lambda^*) \neq 0$, we can invoke the implicit function theorem to deduce that there exists an $\varepsilon>0$ and a smooth function $\map{\overline \lambda}{(-\varepsilon,\varepsilon)}{\R}$ with $\overline \lambda(0) = \lambda^*$ such that $\Psi(t,\mu^*,\alpha^*,\overline{\lambda}(t)) \equiv 0$. Moreover, the curve $\{(t,\overline{\lambda}(t)) : \abs{t} < \varepsilon\}$ describes all solutions to $\Psi(t,\mu^*,\alpha^*,\lambda) = 0$ in a neighborhood of $(0,\lambda^*)$. The corresponding solution curve to $\mcF(w,\mu^*,\alpha^*,\lambda)=0$ is $\{(\overline{w}(t),\overline{\lambda}(t)) : \abs{t} < \varepsilon\}$, where $\overline{w}(t) \coloneqq t w^*+\psi(t w^*,\mu^*,\alpha^*,\overline \lambda(t))$. It follows that
        \[
            \dot{\overline{w}}(t) = w^* + D_w\psi(tw^*,\mu^*,\alpha^*,\overline \lambda(t))w^* + D_\lambda\psi(tw^*,\mu^*,\alpha^*,\overline\lambda(t)) \dot{\overline \lambda}(t),
        \]
        and we can conclude, once again using the properties of $\psi$ given in \Cref{LyaSch}, that $\overline{w}(0) = 0$ and $\dot{\overline{w}}(0) = w^*$. Consequently, we obtain \eqref{eq:1d_asymptotic}.
    \end{proof}
    
    If $\varepsilon$ is sufficiently small, the waves obtained from \Cref{1dBif} are Stokes waves. This can be seen from the asymptotic formula in \eqref{eq:1d_asymptotic}.

    \subsection{Properties of the bifurcation curve}
        \label{1dTaylorSec}
        
        The one-dimensional bifurcation result in \Cref{1dBif} is analogous to \cite[Theorem 4.6]{Ehrnstroem2011}, which uses $\mu$ instead of $\lambda$ as the bifurcation parameter. Other than the parameters, the main difference between the theorems is the addition of the transversality condition \eqref{lambdaNuisance} for bifurcation with respect to $\lambda$. Here, we will investigate the properties of the solution curves more closely.
        
        The motivation is to understand the solution set of \eqref{eq:F_problem} better, and in particular to rule out the possibility that the solution curve found here coincides with the one from \cite{Ehrnstroem2011}. The only way this can occur is if $\overline{\lambda}(t)$ and $\overline{\mu}(t)$, in the notation of \cite{Ehrnstroem2011}, are constant along the curves. (If they were constant, we would obtain the same solutions by uniqueness in \Cref{1dBif}.) \Cref{lambdaDotZero} shows that we need to consider at least second-order properties of the bifurcation curve in order to achieve this.
        
        \begin{proposition}[First derivative of $\overline{\lambda}$] \label{lambdaDotZero}
            Under the hypothesis of \Cref{1dBif}, the function $\overline{\lambda}$ satisfies
            \[
                \dot{\overline{\lambda}}(0) = 0,
            \]
            and so the bifurcation parameter is constant to the first order along the bifurcation curve.
        \end{proposition}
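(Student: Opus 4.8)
The plan is to differentiate the scalar reduced equation once at the bifurcation point and then kill the resulting term using the parity (Fourier) structure of the nonlinearity. Recall from the proof of \Cref{1dBif} that $\overline{\lambda}$ is determined by the identity $\Psi(t,\mu^*,\alpha^*,\overline{\lambda}(t))\equiv 0$, obtained by the implicit function theorem applied to $\Psi$, and that $\Psi_\lambda(0,\Lambda^*)\neq 0$. Differentiating this identity in $t$ at $t=0$ gives
\[
    \Psi_t(0,\Lambda^*) + \Psi_\lambda(0,\Lambda^*)\,\dot{\overline{\lambda}}(0) = 0,
\]
so it is enough to prove $\Psi_t(0,\Lambda^*)=0$. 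From \eqref{eq:Psi_definition} we have $\Psi_t(t,\Lambda)=\int_0^1 z\,\Phi_{tt}(tz,\Lambda)\dz$, hence $\Psi_t(0,\Lambda^*)=\tfrac12\Phi_{tt}(0,\Lambda^*)$, and the whole statement reduces to showing $\Phi_{tt}(0,\Lambda^*)=0$.

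To compute $\Phi_{tt}(0,\Lambda^*)$, I would differentiate \eqref{eq:Phi_t} once more in $t$ and evaluate at $t=0$, using $\psi(0,\Lambda^*)=0$ and $D_w\psi(0,\Lambda^*)=0$ from \Cref{LyaSch}. This yields
\[
    \Phi_{tt}(0,\Lambda^*) = \Pi_Z D_w^2\mcF(0,\Lambda^*)[w^*,w^*] + \Pi_Z D_w\mcF(0,\Lambda^*)\,D_w^2\psi(0,\Lambda^*)[w^*,w^*].
\]
The second term vanishes because $\Pi_Z$ projects along $\ran D_w\mcF(0,\Lambda^*)=\ran\mcL(\Lambda^*)$ (recall $\mcT(\Lambda^*)$ is an isomorphism onto $X$). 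Thus the statement reduces further to $\Pi_Z D_w^2\mcF(0,\Lambda^*)[w^*,w^*]=0$.

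This last identity is the heart of the matter, and it follows from a Fourier argument. The kernel element $w^*=\mcT(\Lambda^*)\phi^*$ has both components of the form $\cos(n\kappa x)$ times a function of $s$ alone, and the same holds for the basis vector $\tilde{w}_n^*$ of $Z$. Since every coefficient appearing in $\mcF$ that involves the trivial solution depends on $s$ only, $D_w^2\mcF(0,\Lambda^*)[w^*,w^*]$ is a finite sum of products of $s$-dependent coefficients with two factors, each of which is $w^*$ or a derivative of $w^*$; the $x$-dependence of each such factor is $\cos(n\kappa x)$ or $\sin(n\kappa x)$, so every term in the sum has $x$-Fourier spectrum contained in $\{0,2n\}$ (indeed only cosine frequencies, since $\mcF$ preserves evenness). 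As $n\in\N$, neither $0$ nor $2n$ equals $n$, so by orthogonality of $\{\cos(m\kappa x)\}_{m\geq 0}$ in $L^2(0,2\pi/\kappa)$---which underlies both integrals in the inner product \eqref{eq:inner_product_Y}---we get $\ip{D_w^2\mcF(0,\Lambda^*)[w^*,w^*]}{\tilde{w}_n^*}_Y=0$, and hence $\Pi_Z D_w^2\mcF(0,\Lambda^*)[w^*,w^*]=0$ by \eqref{PiZ}. Therefore $\Phi_{tt}(0,\Lambda^*)=0$ and $\dot{\overline{\lambda}}(0)=0$.

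The step I expect to need the most care is the bookkeeping behind the frequency claim: one must check, term by term in the second derivative of the expressions in \eqref{mcFDef}, that each contribution really is an $s$-only coefficient multiplied by two factors linear in $w^*$ and its derivatives, so that $x$-frequencies only add and no frequency-$n$ term can survive the projection $\Pi_Z$. Once this is granted, both the orthogonality computation and the implicit-function-theorem differentiation are routine.
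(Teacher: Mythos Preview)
Your proof is correct and follows essentially the same route as the paper: differentiate the implicit relation for $\overline{\lambda}$, reduce to showing $\Phi_{tt}(0,\Lambda^*)=0$, eliminate the $D_w^2\psi$ contribution because $\Pi_Z$ annihilates $\ran D_w\mcF(0,\Lambda^*)$, and then use the Fourier structure to see that $D_w^2\mcF(0,\Lambda^*)[w^*,w^*]$ contains only frequencies $0$ and $2n$, hence is orthogonal to $\tilde{w}_n^*$. The paper expresses this last step as integrating $\sin^a(n\kappa x)\cos^b(n\kappa x)$ with $a+b=3$ over a period, which is precisely your spectrum argument in different words.
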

        \begin{proof}
            We adopt the notation used in the proof of \Cref{1dBif}. Differentiation of the identity $\Psi(t,\mu^*,\alpha^*,\overline{\lambda}(t)) = 0$, and evaluation at $t=0$, yields the equation
            \begin{equation}
                \label{eq:lambda_dot_equation}
                \Psi_t(0,\Lambda^*) + \Psi_\lambda (0,\Lambda^*)\dot{\overline{\lambda}}(0)=0
            \end{equation}
            for the derivative of $\overline{\lambda}$ at the origin. From \eqref{eq:Phi_t_lambda} and the discussion immediately after, we know that
            \begin{equation}
                \label{eq:Psi_lambda}
                \Psi_\lambda(0,\Lambda^*) = \Pi_ZD_\lambda \mcL(\Lambda^*)\phi^* \neq 0,
            \end{equation}
            which means that \eqref{eq:lambda_dot_equation} uniquely determines $\dot{\overline{\lambda}}(0)$. However, we still need to compute $\Psi_t(0,\Lambda^*)$.
            
            From \eqref{eq:Psi_definition}, we obtain
            \[
                \Psi_t(0,\Lambda^*) = \frac{1}{2}\Phi_{tt}(0,\Lambda^*),
            \]
            and differentiation in \Cref{eq:Phi_t} leads to
            \[
                \Phi_{tt}(t,\Lambda) = \begin{multlined}[t]\Pi_Z D_{w}^2 \mcF(tw^*+\psi(tw^*,\Lambda),\Lambda)(w^*+D_w\psi(tw^*,\Lambda)w^*)^2 \\+ \Pi_Z D_w \mcF(tw^*+\psi(tw^*,\Lambda),\Lambda)D_w^2\psi(tw^*,\Lambda)(w^*)^2.\end{multlined}
            \]
            Hence, by using the properties of $\psi$ given in \Cref{LyaSch}, and using that $\Pi_Z$ projects along the range of $D_w \mcF(0,\Lambda^*)$, we find
            \begin{equation}
                \label{eq:Psi_t}
                \Psi_t(0,\Lambda^*) = \frac{1}{2}\Pi_ZD_w^2\mcF(0,\Lambda^*)(w^*)^2.
            \end{equation}
            
            Using \Cref{eq:lambda_dot_equation,eq:Psi_lambda,eq:Psi_t} and the formula \eqref{PiZ} for $\Pi_Z$ given in \Cref{mcLisFred}, we now find
            \begin{equation}
                \label{eq:lambda_dot}
                \dot{\overline{\lambda}}(0) = - \frac{\ip*{D_w^2\mcF(0,\Lambda^*)(w^*)^2}{\tilde{w}^*}_Y}{2\ip{D_\lambda \mcL(\Lambda^*)\phi^*}{\tilde{w}^*}_Y},
            \end{equation}
            and so it is sufficient to show that the numerator,
            \begin{equation}
                \label{eq:numerator_integrals}
                \smashoperator{\int_{0}^{2 \pi/\kappa}} \eta^*D_w^2 \mcF_1(0,\Lambda) (w^*)^2 \dx + \int\limits_0^1\,\smashoperator{ \int_0^{2\pi/\kappa}} \phi^* D_w^2 \mcF_2(0,\Lambda) (w^*)^2 \dx \ds, 
            \end{equation}
            vanishes. Since $w^* = \mcT(\Lambda^*)\phi^*$ with $\phi^*$ being a separable function of $x$ and $s$, so is $\eta^*$. Moreover, we see from \Cref{phik} that their $x$-dependence is through $\cos(n\kappa x)$. Thus each term in $ D_w^2 \mcF_1(0,\Lambda^*) (w^*)^2$ and $ D_w^2\mcF_1(0,\Lambda^*) (w^*)^2$ has an $x$-dependence of the form $\sin^a(n\kappa x) \cos^b( n\kappa x)$ with $a+b=2$ (see the derivatives listed in \Cref{appendix}). It follows that we will be integrating terms whose $x$-dependence is $\sin^a(n \kappa x) \cos^b(n \kappa x)$ with $a+b=3$ in \eqref{eq:numerator_integrals}, and therefore that the numerator in \eqref{eq:lambda_dot} vanishes.
        \end{proof}
        \begin{remark}
            \Cref{eq:lambda_dot} also holds if $\mu$ is substituted for $\lambda$. Since the proof of \Cref{lambdaDotZero} only depends on the fact that the numerator in \eqref{eq:lambda_dot} vanishes, we can conclude that one also has $\dot{\overline{\mu}}(0)=0$ when using $\mu$ as the bifurcation parameter.
        \end{remark}
        
        To consider the question of second-order behavior of the solution curve, let us return to the expression for $\dot{\overline{w}}$ we found in the one-dimensional bifurcation result \Cref{1dBif}, namely
        \begin{equation}
            \label{eq:w_dot}
            \dot{\overline{w}}(t) = w^* + D_w\psi w^* + \dot{\overline{\lambda}}(t)\psi_\lambda ,
        \end{equation}
        where $D_w\psi$ and $\psi_\lambda$ are evaluated at $(tw^*,\mu^*,\alpha^*,\overline{\lambda}(t))$. Taking another derivative in \eqref{eq:w_dot} yields
        \begin{equation}
            \label{eq:w_dot_dot}
            \ddot{\overline{w}}(t) = D_w^2\psi (w^*)^2+2\dot{\overline\lambda}(t)D_{w \lambda}\psi w^*  +\dot{\overline\lambda}(t)^2\psi_{\lambda \lambda} +\ddot{\overline \lambda}(t)\psi_\lambda .
        \end{equation}
        This simplifies significantly at $t=0$. An expression for $\ddot{\overline{\lambda}}(0)$ can also be found, akin to how \eqref{eq:lambda_dot} was derived. The details are omitted, see for instance \cite[Section I.6]{Kielhoefer2012}.
        
        \begin{proposition}[Second derivatives]
            \label{prop:second_derivatives}
            Under the hypothesis of \Cref{1dBif}, we have
            \[
                \ddot{\overline{w}}(0) = D_w^2\psi(0,\Lambda^*) (w^*)^2.
            \]
            and
            \[
                \ddot{\overline{\lambda}}(0) = - \frac{\ip*{D_{w}^3\mcF(0,\Lambda^*)(w^*)^3+3D_{w}^2\mcF(0,\Lambda^*)(w^*,D_{w}^2\psi(0,\Lambda^*)(w^*)^2)}{\tilde{w}^*}_Y}{3\ip{D_{\lambda}\mcL(\Lambda^*)\phi^*}{\tilde{w}^*}_Y}
            \]
            for the solution curve $(\overline{w}(t),\overline{\lambda}(t))$.
            
        \end{proposition}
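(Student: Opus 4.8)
The plan is to push the Lyapunov--Schmidt computation from the proofs of \Cref{1dBif} and \Cref{lambdaDotZero} one order further, exploiting that we already know $\dot{\overline{\lambda}}(0)=0$. For the first claim I would simply evaluate \eqref{eq:w_dot_dot} at $t=0$. By \Cref{LyaSch} we have $\psi(0,\Lambda)=0$ for every $\Lambda\in\mcU'$; differentiating this identity in $\lambda$ gives $\psi_\lambda(0,\Lambda^*)=0$, so the term $\ddot{\overline{\lambda}}(0)\psi_\lambda$ disappears. By \Cref{lambdaDotZero} we have $\dot{\overline{\lambda}}(0)=0$, which kills the two terms of \eqref{eq:w_dot_dot} carrying a factor $\dot{\overline{\lambda}}(0)$. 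What remains is exactly $\ddot{\overline{w}}(0)=D_w^2\psi(0,\Lambda^*)(w^*)^2$.

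For $\ddot{\overline{\lambda}}(0)$ I would differentiate the identity $\Psi(t,\mu^*,\alpha^*,\overline{\lambda}(t))\equiv 0$ twice and set $t=0$. Using $\dot{\overline{\lambda}}(0)=0$ once more, the chain rule collapses to $\Psi_{tt}(0,\Lambda^*)+\Psi_\lambda(0,\Lambda^*)\ddot{\overline{\lambda}}(0)=0$, so that
\[
    \ddot{\overline{\lambda}}(0)=-\frac{\Psi_{tt}(0,\Lambda^*)}{\Psi_\lambda(0,\Lambda^*)},
\]
the denominator being nonzero by the transversality argument already used in \Cref{1dBif}. From the integral representation $\Psi(t,\Lambda)=\int_0^1\Phi_t(tz,\Lambda)\dz$ in \eqref{eq:Psi_definition} one reads off $\Psi_\lambda(0,\Lambda^*)=\Phi_{t\lambda}(0,\Lambda^*)=\Pi_ZD_\lambda\mcL(\Lambda^*)\phi^*$ (computed in the proof of \Cref{1dBif}), and, since $\int_0^1 z^2\dz=\tfrac13$, also $\Psi_{tt}(0,\Lambda^*)=\tfrac13\Phi_{ttt}(0,\Lambda^*)$.

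It then remains to compute $\Phi_{ttt}(0,\Lambda^*)$. Writing $W(t)\coloneqq tw^*+\psi(tw^*,\Lambda^*)$, so that $\Phi(t,\Lambda^*)=\Pi_Z\mcF(W(t),\Lambda^*)$, I would differentiate three times using the chain and product rules, exactly as \eqref{eq:Phi_t} and the expression for $\Phi_{tt}$ in \Cref{lambdaDotZero} were obtained. Using $W(0)=0$, $W'(0)=w^*$ (because $D_w\psi(0,\Lambda^*)=0$), $W''(0)=D_w^2\psi(0,\Lambda^*)(w^*)^2$ and $W'''(0)=D_w^3\psi(0,\Lambda^*)(w^*)^3$, this gives
\[
    \Phi_{ttt}(0,\Lambda^*)=\Pi_Z\Bigl[D_w^3\mcF(0,\Lambda^*)(w^*)^3+3D_w^2\mcF(0,\Lambda^*)\bigl(w^*,D_w^2\psi(0,\Lambda^*)(w^*)^2\bigr)+D_w\mcF(0,\Lambda^*)D_w^3\psi(0,\Lambda^*)(w^*)^3\Bigr].
\]
The last term lies in $\ran D_w\mcF(0,\Lambda^*)$, along which $\Pi_Z$ projects, hence it vanishes. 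Finally, since $Z$ is one-dimensional and \eqref{PiZ} expresses $\Pi_Z$ through the $Y$-inner product with $\tilde{w}^*$, the ratio $-\Phi_{ttt}(0,\Lambda^*)/\bigl(3\Phi_{t\lambda}(0,\Lambda^*)\bigr)$ equals the ratio of the corresponding inner products with $\tilde{w}^*$, which is precisely the claimed formula.

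The only mildly delicate point is the bookkeeping in the third derivative of $t\mapsto\Pi_Z\mcF(W(t),\Lambda^*)$: one must track the multilinear arguments correctly and verify that every term in which $D_w\mcF(0,\Lambda^*)$ is applied to a higher derivative of $\psi$ is annihilated by $\Pi_Z$, just as in the first- and second-order computations. I do not expect a genuine obstacle here; once the reductions $\psi(0,\Lambda)\equiv 0$, $D_w\psi(0,\Lambda^*)=0$ and $\dot{\overline{\lambda}}(0)=0$ are in hand, both formulas are forced, and one may alternatively simply cite \cite[Section I.6]{Kielhoefer2012} for the standard bifurcation-curve Taylor expansion.
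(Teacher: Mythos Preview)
Your argument is correct and follows exactly the route the paper takes: for $\ddot{\overline{w}}(0)$ you evaluate \eqref{eq:w_dot_dot} at $t=0$ using $\psi_\lambda(0,\Lambda^*)=0$ and $\dot{\overline{\lambda}}(0)=0$, just as in the paper's proof, and for $\ddot{\overline{\lambda}}(0)$ you supply precisely the computation that the paper omits (referring instead to \cite[Section I.6]{Kielhoefer2012}). The bookkeeping in your third-derivative expansion is accurate, and your final cancellation of the $D_w\mcF(0,\Lambda^*)D_w^3\psi$ term via $\Pi_Z$ is exactly the mechanism that makes the formula come out as stated.
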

        \begin{proof}[Proof for $\ddot{\overline{w}}(0)$]
            In view of \Cref{LyaSch}, we have that $\psi(0,\Lambda) = 0$ for all $\Lambda$ in an open neighborhood of $\Lambda^*$, and thus $\psi_\lambda(0,\Lambda^*)$ is zero. Finally, the second and third terms in \eqref{eq:w_dot_dot} vanish due to \Cref{lambdaDotZero}.
        \end{proof}
        \begin{remark}
            The expression for $\ddot{\overline{\mu}}(0)$ can be obtained by simply substituting $\mu$ for $\lambda$ in the expression for $\ddot{\overline{\lambda}}(0)$. From this, it follows that
            \[
                \ddot{\overline{\lambda}}(0) = \frac{\ip{D_\mu\mcL(\Lambda^*)\phi^*}{\tilde{w}^*}_Y}{\ip{D_\lambda\mcL(\Lambda^*)\phi^*}{\tilde{w}^*}_Y}\ddot{\overline{\mu}}(0)= \frac{\ddot{\overline{\mu}}(0)}{\mu(\cot(\lambda)+\mu^2 \abs{\alpha}^{3/2}/2)}.
            \]
             In particular, this implies that $\ddot{\overline{\lambda}}(0)$ and $\ddot{\overline{\mu}}(0)$ must have either the same or opposite sign, depending on the sign of $\mu$ and which ``side'' of the transversality condition \eqref{lambdaNuisance} $\Lambda^*$ is on.
        \end{remark}
        
        We now give a more transparent description of $D_w^2\psi(0,\Lambda^*)(w^*)^2$, which \Cref{prop:second_derivatives} shows is required for computing both $\ddot{\overline{w}}(0)$ and $\ddot{\overline{\lambda}}(0)$.
        \begin{lemma}[Description of $D_w^2\psi(0,\Lambda^*)(w^*)^2$]
            \label{lem:psi_second_derivative}
            Write
            \begin{equation}
                \label{eq:Dww_F1_F2}
                \begin{aligned}
                    D_w^2\mcF_1(0,\Lambda^*)(w^*)^2 &= c_0 + c_2\cos(2n\kappa x),\\
                    D_w^2\mcF_2(0,\Lambda^*)(w^*)^2 &= b_0(s) + b_2(s)\cos(2n\kappa x),
                \end{aligned}
            \end{equation}
            and let $\zeta \in C_{\kappa,\textnormal{e}}^{2,\beta}(\overline{\hat{\Omega}})$ be such that
            \[
                \zeta(x,s) \coloneqq a_0(s) + a_2(s)\cos(2n\kappa x),
            \]
            where the coefficients $a_0$ and $a_2$ solve the boundary value problems
            \begin{equation}
                \label{eq:a0_a2_equations}
                \begin{gathered}
                    a_j''(s) - \theta_{jn}^2 a_j(s) = -b_j(s),\\
                    a_j(0)=0, \quad \psi_{0s}(1)a_j'(1) - \paren*{\psi_{0ss}(1) + \frac{1}{\psi_{0s}(1)}}a_j(1)=-c_j,
                \end{gathered}
            \end{equation}
            for $j=0,2$. Then
            \[
                D_w^2\psi(0,\Lambda^*)(w^*)^2 = \mcT(\Lambda^*)\zeta.
            \]
        \end{lemma}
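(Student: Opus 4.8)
The idea is to apply the Lyapunov--Schmidt function $\psi$ directly to the definition of $D_w^2\psi(0,\Lambda^*)(w^*)^2$ and then recognize the resulting linear boundary value problem as the one solved by $\zeta$ through the isomorphism $\mcT(\Lambda^*)$. First I would recall the characterization of $\psi$ from \Cref{LyaSch}: for $w^* \in \mc{N}$ the element $w = w^* + \psi(w^*,\Lambda^*)$ satisfies $(I-\Pi_Z)\mcF(w,\Lambda^*) = 0$, while $\psi(w^*,\Lambda^*) \in X_0 \subset X_0 + \mc{M}$. Differentiating the identity $(I-\Pi_Z)\mcF(w^* + \psi(w^*,\Lambda^*),\Lambda^*) = 0$ twice in the direction $w^*$ at $w^* = 0$, and using $\psi(0,\Lambda^*) = 0$ together with $D_w\psi(0,\Lambda^*) = 0$, gives
\[
    (I - \Pi_Z)\paren[\big]{D_w^2\mcF(0,\Lambda^*)(w^*)^2 + D_w\mcF(0,\Lambda^*)\,D_w^2\psi(0,\Lambda^*)(w^*)^2} = 0.
\]
Since $\ran D_w\mcF(0,\Lambda^*) = \ran \mcL(\Lambda^*)$ is exactly $\ker(I - \Pi_Z)$, and since $D_w^2\psi(0,\Lambda^*)(w^*)^2 \in X_0$ by construction, the map $v \mapsto (I-\Pi_Z)D_w\mcF(0,\Lambda^*)v$ restricted to $X_0$ is an isomorphism onto $\ran \mcL(\Lambda^*)$; hence $D_w^2\psi(0,\Lambda^*)(w^*)^2$ is the unique element of $X_0$ with
\[
    D_w\mcF(0,\Lambda^*)\,D_w^2\psi(0,\Lambda^*)(w^*)^2 = -(I-\Pi_Z)D_w^2\mcF(0,\Lambda^*)(w^*)^2.
\]

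Next I would pass through $\mcT(\Lambda^*)$. Writing $D_w^2\psi(0,\Lambda^*)(w^*)^2 = \mcT(\Lambda^*)\zeta$ for some $\zeta \in \tilde X_2$, and recalling $\mcL(\Lambda^*) = D_w\mcF(0,\Lambda^*)\mcT(\Lambda^*)$, the previous display becomes $\mcL(\Lambda^*)\zeta = -(I-\Pi_Z)D_w^2\mcF(0,\Lambda^*)(w^*)^2$. Using the explicit form \eqref{mcLExp} of $\mcL$ together with the stated Fourier expansions \eqref{eq:Dww_F1_F2}, I would expand $\zeta(x,s) = \sum_{k} a_k(s)\cos(k\kappa x)$ and match coefficients. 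The second component $(\Delta - \alpha)\zeta = -(\text{second component of }D_w^2\mcF_2)$ gives, mode by mode, $a_k'' - \theta_k^2 a_k = -b_k$ where the right-hand side is only nonzero for $k = 0, 2n$; together with $\zeta|_{s=0} = 0$ this forces $a_k \equiv 0$ for $k \notin \{0, 2n\}$ (here one must note $\theta_{2n} = \theta(2n,\alpha)$, which is the $\theta_{jn}$ with $j = 2$ appearing in \eqref{eq:a0_a2_equations}, while $j=0$ gives $\theta_0 = |\alpha|^{1/2} i$, i.e.\ the constant mode). The first component of $\mcL(\Lambda^*)\zeta$, evaluated at $s = 1$ and expanded in $\cos(k\kappa x)$, yields the Robin-type boundary condition $\psi_{0s}(1)a_k'(1) - (\psi_{0ss}(1) + 1/\psi_{0s}(1))a_k(1) = -c_k$ for each mode. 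Combining these recovers exactly the boundary value problems \eqref{eq:a0_a2_equations} for $a_0$ and $a_2$, which is the sought description.

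The one subtlety to treat carefully, and the only real obstacle, is the role of $\Pi_Z$: the equation $\mcL(\Lambda^*)\zeta = -(I-\Pi_Z)g$ with $g \coloneqq D_w^2\mcF(0,\Lambda^*)(w^*)^2$ has a solution $\zeta$ precisely because the right-hand side lies in $\ran \mcL(\Lambda^*)$, but one should check that the $\Pi_Z$-correction does not alter the mode-by-mode boundary value problems as written in \eqref{eq:a0_a2_equations}. Here I would invoke the parity/mode argument already used in the proof of \Cref{lambdaDotZero}: the basis vector $\tilde w^* = (\eta_{\phi^*},\phi^*)$ of $Z$ has $x$-dependence $\cos(n\kappa x)$, whereas $g$ has only the modes $\cos(0 \cdot \kappa x)$ and $\cos(2n\kappa x)$; consequently $\ip{g}{\tilde w^*}_Y = 0$ by orthogonality of distinct Fourier modes, so in fact $\Pi_Z g = 0$ and the correction is vacuous. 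With that observation in place the matching of coefficients is unambiguous, and the uniqueness of $\zeta$ in $\tilde X_2$ (equivalently, the unique solvability of each scalar boundary value problem in \eqref{eq:a0_a2_equations}, which is guaranteed since $n \in M(\Lambda^*)$ means $2n \notin M(\Lambda^*)$ by \Cref{lemma:M_decreasing}/\Cref{theorem:kernel_local_description}, so the homogeneous problems have only the trivial solution) finishes the proof.
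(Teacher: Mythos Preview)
Your proposal is correct and follows essentially the same route as the paper: differentiate the Lyapunov--Schmidt identity $(I-\Pi_Z)\mcF(tw^*+\psi(tw^*,\Lambda^*),\Lambda^*)=0$ twice at $t=0$, use the mode-orthogonality argument from \Cref{lambdaDotZero} to see that $\Pi_Z D_w^2\mcF(0,\Lambda^*)(w^*)^2=0$ (equivalently, that $D_w^2\mcF(0,\Lambda^*)(w^*)^2$ lies in $\ran D_w\mcF(0,\Lambda^*)$), pass through $\mcT(\Lambda^*)$ to obtain $\mcL(\Lambda^*)\zeta=-D_w^2\mcF(0,\Lambda^*)(w^*)^2$, and read off the boundary value problems for $a_0,a_2$ from \eqref{mcLExp}. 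Your explicit verification that the scalar problems are uniquely solvable (via $0,2n\notin M(\Lambda^*)=\{n\}$) is a detail the paper leaves implicit; otherwise the arguments coincide.
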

        \begin{proof}
            That $D_w^2\mcF(0,\Lambda^*)(w^*)^2$ can always be written as in \eqref{eq:Dww_F1_F2} can be deduced from the expressions for the derivatives of $\mcF$ listed in \Cref{appendix}.
            
            The function $\psi$ satisfies the identity
            \[
                (I-\Pi_Z)\mcF(tw^* + \psi(tw^*,\Lambda^*),\Lambda^*)= 0
            \]
            for sufficiently small $t$. If we take two derivatives of this equation and evaluate at $t=0$ we obtain the equation
            \[
                (I-\Pi_Z)(D_w^2\mcF(0,\Lambda^*)(w^*)^2 + D_w\mcF(0,\Lambda^*)D_w^2\psi(0,\Lambda^*)(w^*)^2)=0
            \]
            for $D_w^2\psi(0,\Lambda^*)(w^*)^2$. Since we established in the proof of \Cref{lambdaDotZero} that $D_w^2 \mcF(0,\Lambda^*)(w^*)^2$ lies in the range of $D_w \mcF(0,\Lambda^*)$, and since $\Pi_Z$ projects along $\ran D_w \mcF(0,\Lambda^*)$, this implies that
            \begin{equation}
                \label{psiwwEq}
                D_w \mcF(0,\Lambda^*) D_w^2\psi(0,\Lambda^*)(w^*)^2 = - D_w^2 \mcF(0,\Lambda^*)(w^*)^2,
            \end{equation}
            which uniquely determines $D_w^2\psi(0,\Lambda^*)(w^*)^2$.
            
            If we now introduce the function $\zeta \in \tilde X_2$ by
            \[
                \mcT(\Lambda^*)\zeta \coloneqq D_w^2\psi (0,\Lambda^*)(w^*)^2,
            \]
            then \Cref{psiwwEq} can be written
            \[
                \mcL(\Lambda^*) \zeta  = - D_w^2 \mcF(0,\Lambda^*)(w^*)^2.
            \]
            Utilizing \eqref{mcLExp}, this proves the lemma. 
        \end{proof}
        
        Due to the form of $w^*$ and the expressions for the derivatives of $\mcF$ in \Cref{appendix}, we know that the coefficients $a_0$ and $a_2$ in \Cref{lem:psi_second_derivative} are polynomials in $s$, $\sinh(\theta_{jn}s)/\theta_{jn}$ and $\cosh(\theta_{jn}s)$ for $j=0,1,2$. They can, with some effort, be computed explicitly using a computer algebra system. However, the general expressions are much too long to perform any useful analysis of the second derivatives. We will therefore content ourselves with presenting the result for the first special case of \Cref{example:explicit_kernels}, which was constructed specifically to make $\phi^*$ and $\psi_0$ as simple as possible. This, in turn, yields particularly simple $a_0$ and $a_2$.
        
        \begin{theorem}[Special case]
            \label{thm:special_case}
            When $\kappa = 1$ and $\Lambda^* = (1,-1,\pi/2)$, the functions $a_0$ and $a_2$ are given by
            \begin{equation}
                \label{eq:a0_a2_special_case}
                \begin{aligned}
                    a_0(s) &= s + \frac{1}{2}s^2 \sin(s-1) + \frac{3\sin(s)}{2(\cos(1)-\sin(1))},\\
                    a_2(s) &= s + \frac{1}{2}s^2\sin(s-1) + \frac{\sinh(\sqrt{3}s)}{2(\sqrt{3}\cosh(\sqrt{3}) -\sinh(\sqrt{3}))},
                \end{aligned}
            \end{equation}
            respectively. This yields
            \begin{equation}
                \label{eq:lambda_dot_dot_special_case}
                \ddot{\overline{\lambda}}(0) = \frac{3}{2} + 3a_0(1) + \frac{1}{2}a_2(1) < 0.
            \end{equation}
            In particular, $\overline{\lambda}$ is not constant along the bifurcation curve, which therefore does not coincide with the one found in \cite{Ehrnstroem2011}.
        \end{theorem}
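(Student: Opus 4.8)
The plan is to specialize the general second-order formulas of \Cref{prop:second_derivatives} and \Cref{lem:psi_second_derivative} to $\kappa=1$ and $\Lambda^*=(1,-1,\pi/2)$, where every ingredient collapses to elementary functions. First I record the data. From $\psi_0(s)=\cos\bigl((s-1)+\tfrac\pi2\bigr)=-\sin(s-1)$ we get $\psi_{0s}(1)=-1$ and $\psi_{0ss}(1)=0$; since $\theta_1=\sqrt{\alpha+\kappa^2}=0$, the kernel function is $\phi^*=\phi_1=s\cos x$, and $\mcT(\Lambda^*)$ carries it to $w^*=(\eta^*,\hat\phi^*)$ with $\eta^*=\cos x$ and $\hat\phi^*=s\bigl(1-\cos(s-1)\bigr)\cos x$. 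The exponents entering \eqref{eq:a0_a2_equations} are $\theta_0=i$ and $\theta_2=\sqrt{3}$, so the two decoupled boundary-value problems read $a_0''+a_0=-b_0$ and $a_2''-3a_2=-b_2$, each together with $a_j(0)=0$ and $-a_j'(1)+a_j(1)=-c_j$.

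Next I would compute $D_w^2\mcF(0,\Lambda^*)(w^*)^2$ from the derivative formulas collected in \Cref{appendix}, reducing the $x$-dependence via $\cos^2 x=\tfrac12(1+\cos 2x)$ and the like to read off the constants $c_0,c_2$ and the functions $b_0(s),b_2(s)$ — polynomials in $s$, $\sin(s-1)$, $\cos(s-1)$ — appearing in \eqref{eq:Dww_F1_F2}. Solving the resulting boundary-value problems (a particular solution for the forcing plus the unique homogeneous solution vanishing at $s=0$, namely $\sin s$ for $j=0$ and $\sinh(\sqrt{3}s)$ for $j=2$, scaled to meet the Robin condition) yields \eqref{eq:a0_a2_special_case}; equivalently, one verifies directly that the stated $a_0,a_2$ satisfy \eqref{eq:a0_a2_equations}. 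By \Cref{lem:psi_second_derivative} this identifies $D_w^2\psi(0,\Lambda^*)(w^*)^2=\mcT(\Lambda^*)\zeta$ with $\zeta=a_0(s)+a_2(s)\cos 2x$.

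Then I evaluate the expression for $\ddot{\overline\lambda}(0)$ in \Cref{prop:second_derivatives}. The denominator is immediate from \Cref{IpCalcLemmalambda}: at $\lambda=\pi/2$ one has $A=-\tfrac{2\pi}{\kappa\psi_{0s}(1)^2}\bigl(\cot\lambda+\tfrac{\mu^2|\alpha|^{3/2}}{2}\bigr)=-\pi$ and $(\sinh(\theta_1)/\theta_1)|_{s=1}=1$, so $3\ip{D_\lambda\mcL(\Lambda^*)\phi^*}{\tilde w^*}_Y=-3\pi$. For the numerator I substitute $D_w^2\psi(0,\Lambda^*)(w^*)^2=\mcT(\Lambda^*)\zeta$ into $\ip{D_w^3\mcF(0,\Lambda^*)(w^*)^3+3D_w^2\mcF(0,\Lambda^*)(w^*,\mcT(\Lambda^*)\zeta)}{\tilde w^*}_Y$ and expand it using the third derivatives of $\mcF$ from \Cref{appendix}. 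Integrating in $x$ first: since $\tilde w^*$ carries only the first harmonic while the bracketed expression is a combination of the first and third harmonics, the $x$-integration retains only the first-harmonic coefficients, and the remaining one-dimensional $s$-integrals, after $\psi_0,\phi^*,a_0,a_2$ are inserted, reduce to a constant multiple of $\tfrac32+3a_0(1)+\tfrac12 a_2(1)$; dividing by $-3\pi$ gives \eqref{eq:lambda_dot_dot_special_case}. This last computation is the main obstacle: it is a lengthy bookkeeping exercise, best handled with a computer algebra system, and one must be careful that the boundary contributions from integrating the $\mcF_1$-derivatives against $\eta^*$ combine correctly with the interior $\mcF_2$-contributions.

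Finally, the sign. Expanding \eqref{eq:lambda_dot_dot_special_case} by means of \eqref{eq:a0_a2_special_case},
\[
    \tfrac32+3a_0(1)+\tfrac12 a_2(1)=5+\frac{9\sin 1}{2(\cos 1-\sin 1)}+\frac{\sinh\sqrt{3}}{4\bigl(\sqrt{3}\cosh\sqrt{3}-\sinh\sqrt{3}\bigr)}.
\]
The third term is a small positive number, at most $1$; since $\cos 1<\sin 1$ the second term is negative, and the crude bounds $\sin 1>\tfrac45$ and $\sin 1-\cos 1<\tfrac13$ give $\tfrac{9\sin 1}{2(\cos 1-\sin 1)}\le-\tfrac{54}{5}$, so the whole expression is strictly less than $1+5-\tfrac{54}{5}<0$. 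Hence $\ddot{\overline\lambda}(0)<0$; since also $\dot{\overline\lambda}(0)=0$ by \Cref{lambdaDotZero}, the function $\overline\lambda$ is non-constant near $t=0$, and so by the uniqueness part of \Cref{1dBif} the bifurcation curve found here is distinct from the one in \cite{Ehrnstroem2011}.
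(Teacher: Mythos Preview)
Your approach is essentially the same as the paper's: specialize the data, compute $c_0,c_2,b_0,b_2$ from the derivatives in \Cref{appendix}, verify that the stated $a_0,a_2$ solve the boundary-value problems \eqref{eq:a0_a2_equations}, and then obtain \eqref{eq:lambda_dot_dot_special_case} from \Cref{prop:second_derivatives}. Your explicit elementary bounds for the sign of $\tfrac32+3a_0(1)+\tfrac12 a_2(1)$ are a welcome addition that the paper omits.
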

        \begin{proof}
            The kernel of $\mcL(\Lambda^*)$ is spanned by $\cos(x)s$, and moreover $\psi_0(s) = -\sin(s-1)$. One may check that for this special case, the coefficients in \Cref{eq:Dww_F1_F2} are given by
            \begin{align*}
                c_0 &= 2, & b_0(s)&= -s -2s\cos(s-1)-\sin(s-1),\\
                c_2 &= 1, & b_2(s)&= 3s-2s\cos(s-1) +(2s^2-1)\sin(s-1).
            \end{align*}
            It follows by direct verification that the functions $a_0$ and $a_2$ in \eqref{eq:a0_a2_special_case} solve the boundary value problems in \eqref{eq:a0_a2_equations}. Finally, a long (but direct) computation from the expression for $\ddot{\overline{\lambda}}(0)$ in \Cref{prop:second_derivatives} yields \eqref{eq:lambda_dot_dot_special_case}.
        \end{proof}
        
        \begin{figure}[htb]
            \centering
            \includegraphics{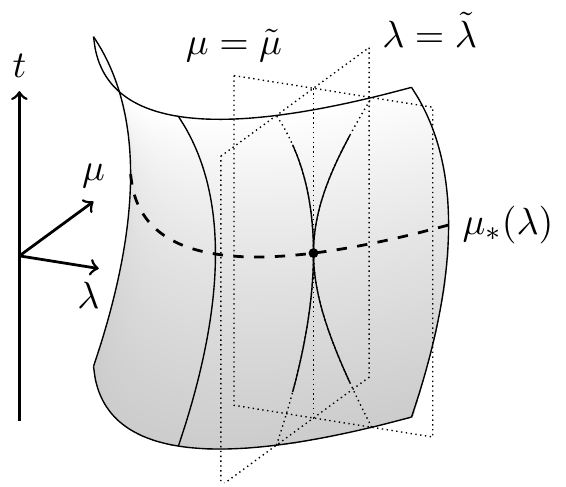}
            \caption{The solution curves emanating from the graph of $\mu_*$, making up the solution set of $\Psi=0$ when $\alpha$ is fixed. The specific point used in \Cref{thm:special_case} can be found to the left of where the transversality condition fails.}
            \label{fig:1d_solutions_geometry}
        \end{figure}
        
        For the same special case as in \Cref{thm:special_case}, we can consider bifurcation from other points on the graph of the associated function $\mu_*$ that was introduced in \Cref{theorem:kernel_local_description}. One may verify that the numerator in the expression for $\ddot{\overline{\lambda}}(0)$ in \Cref{prop:second_derivatives} is negative on the entire graph of $\mu_*$. This means that, locally, the solution set of the function $\Psi$ from the proof of \Cref{1dBif} looks qualitatively like the surface shown in \Cref{fig:1d_solutions_geometry}, when $\alpha$ is fixed. Recall that the transversality condition corresponds to $\mu_*' \neq 0$, and observe that $\ddot{\overline{\lambda}}(0)$ changes sign when this condition fails.
        
        We remark that for some other choices of $\kappa$ and $\Lambda$ the numerator \emph{does} change sign on the graph of $\mu_*$. It follows that \Cref{fig:1d_solutions_geometry} does not, in general, tell the whole story.

    \subsection{Local description}
        \label{LocClass1D}          
        Using \cite[Theorem 4.6]{Ehrnstroem2011}, or \Cref{1dBif} when the transversality condition \eqref{lambdaNuisance} is fulfilled, we can describe all solutions of \Cref{eq:F_problem} in a neighborhood of any $(0,\Lambda^*)$ in $X \times \mcU$ for which $\abs{M(\Lambda^*)}=1$.
        
        Suppose that we have such a point, and that $M(\Lambda^*) = \{n\}$. Then \Cref{theorem:kernel_local_description} tells us that there is a neighborhood of $\Lambda^*$ in which
        \[
            M(\Lambda) = \begin{cases}
                \{n\} & \mu = \mu_1(\alpha,\lambda),\\
                \varnothing & \text{otherwise}.
            \end{cases}
        \]
        This allows us to invoke \cite[Theorem 4.6]{Ehrnstroem2011} on each point on the graph of $\mu_1$, obtaining a family of solution curves. These are, in fact, all the nontrivial solutions near $(0,\Lambda^*)$:
        
        \begin{figure}[htb]
            \centering
            \includegraphics{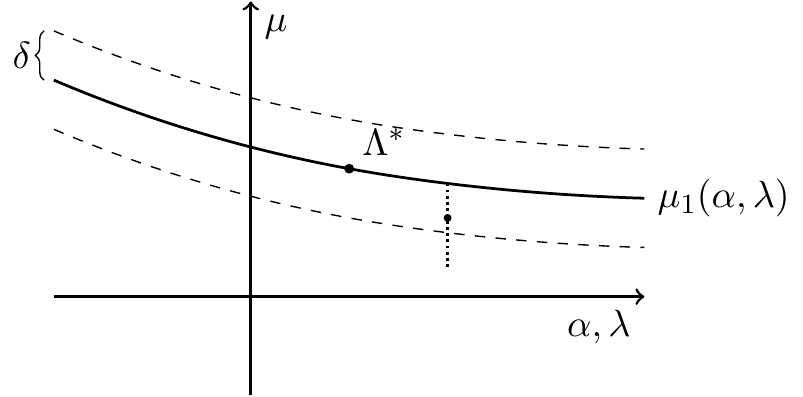}
            \caption{All nontrivial solutions near $(0,\Lambda^*)$ can be found by bifurcation from points on the graph of $\mu_1$.}
            \label{fig:local_description}
        \end{figure}
        
        \begin{theorem}[Local description]
            \label{thm:local_description_1d}
            The above family $\mathcal{S}$ of solution curves bifurcating from points $(0,\mu_1(\alpha,\lambda),\alpha,\lambda)$ for $(\alpha,\lambda)$ in a neighborhood of $(\alpha^*,\lambda^*)$ contains all nontrivial solutions of \Cref{eq:F_problem} in a neighborhood of $(0,\Lambda^*)$ in $X \times \mcU$.
        \end{theorem}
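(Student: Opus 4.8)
The plan is to apply the Lyapunov--Schmidt reduction of \Cref{LyaSch} at $\Lambda^*$, where $N \coloneqq \abs{M(\Lambda^*)} = 1$. Write $M(\Lambda^*) = \{n\}$ (with $n \in \N$, so that $0 \notin M(\Lambda^*)$ and \cite[Theorem 4.6]{Ehrnstroem2011} is applicable), $\phi^* \coloneqq \phi_n$, and $w^* \coloneqq \mcT(\Lambda^*)\phi^*$. In a fixed neighborhood $\mathcal{W}$ of $(0,\Lambda^*)$ in $X \times \mcU$ the reduction says that $\mcF(w,\Lambda) = 0$ if and only if $w = tw^* + \psi(tw^*,\Lambda)$ for some $t \in \R$ with $\Phi(t,\Lambda) = 0$. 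Since $\mcF(0,\Lambda) \equiv 0$ along the trivial solutions we have $\Phi(0,\Lambda) \equiv 0$, and exactly as in the proof of \Cref{1dBif} one may factor $\Phi(t,\Lambda) = t\,\Psi(t,\Lambda)$ with $\Psi$ smooth; here $\Psi$ is scalar-valued once we identify the one-dimensional space $Z$ with $\R$ through $\tilde w^*$. Moreover $w \neq 0$ forces $t \neq 0$ when $\mathcal{W}$ is small enough. Hence the nontrivial solutions lying in $\mathcal{W}$ are precisely the pairs $(w,\Lambda)$ with $w = tw^* + \psi(tw^*,\Lambda)$, $t \neq 0$, and $\Psi(t,\Lambda) = 0$.

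Next I would apply the implicit function theorem to the scalar equation $\Psi(t,\Lambda) = 0$ at $(0,\Lambda^*)$, solving for $\mu$. As in \Cref{1dBif} one has $\Psi(0,\Lambda^*) = \Phi_t(0,\Lambda^*) = \Pi_Z D_w\mcF(0,\Lambda^*)w^* = 0$ and $\Psi_\mu(0,\Lambda^*) = \Phi_{t\mu}(0,\Lambda^*) = \Pi_Z D_\mu\mcL(\Lambda^*)\phi^*$; a computation completely analogous to \Cref{IpCalcLemmalambda} (this is the transversality underlying \cite[Theorem 4.6]{Ehrnstroem2011}, which requires no extra hypothesis precisely because $\mu \neq 0$) shows that $\ip{D_\mu\mcL(\Lambda^*)\phi^*}{\tilde w^*}_Y$ is a nonzero multiple of $(\sinh(\theta_n)/\theta_n)^2$, so $\Psi_\mu(0,\Lambda^*) \neq 0$. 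The implicit function theorem then furnishes a smooth function $g$, defined near $(0,\alpha^*,\lambda^*)$ with $g(0,\alpha^*,\lambda^*) = \mu^*$, such that within $\mathcal{W}$ the equation $\Psi(t,\Lambda) = 0$ is equivalent to $\mu = g(t,\alpha,\lambda)$. Thus, within $\mathcal{W}$, the nontrivial solution set $\Sigma$ is the image under $(t,\alpha,\lambda) \mapsto (tw^* + \psi(tw^*,\Lambda),\Lambda)$, $\Lambda \coloneqq (g(t,\alpha,\lambda),\alpha,\lambda)$, of the region $t \neq 0$, and its ``base'' $\{t = 0\}$ is the curve $(\alpha,\lambda) \mapsto (0,g(0,\alpha,\lambda),\alpha,\lambda)$. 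Each point of this base is an accumulation point of $\Sigma$ and therefore a solution of $\mcF = 0$ at which $D_w\mcF(0,\cdot)$ is not invertible (otherwise the implicit function theorem would make the trivial branch locally isolated there), so $M(g(0,\alpha,\lambda),\alpha,\lambda) \neq \varnothing$; by \Cref{theorem:kernel_local_description} with $N = 1$ this forces $g(0,\alpha,\lambda) = \mu_1(\alpha,\lambda)$ on a neighborhood of $(\alpha^*,\lambda^*)$.

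It then remains to match $\Sigma$ with $\mathcal{S}$. Fixing $(\alpha,\lambda)$ near $(\alpha^*,\lambda^*)$, the slice of $\Sigma$ with these two parameters frozen is a smooth curve of solutions of $\mcF(\cdot,\cdot,\alpha,\lambda) = 0$ through the point $(0,\mu_1(\alpha,\lambda),\alpha,\lambda)$, at which $\abs{M} = 1$, and which is nontrivial for $t \neq 0$. By the uniqueness assertion in \cite[Theorem 4.6]{Ehrnstroem2011} applied at that point, this slice coincides near $t = 0$ with the bifurcation curve $\mathcal{C}_{\alpha,\lambda} \subset \mathcal{S}$ emanating from it. Taking the union over $(\alpha,\lambda)$ yields $\Sigma \subset \mathcal{S}$, which is exactly the assertion of the theorem, since $\Sigma$ is the full nontrivial solution set in $\mathcal{W}$.

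I expect the main obstacle to be the uniformity needed in the last step: \cite[Theorem 4.6]{Ehrnstroem2011} provides, for each base point $(0,\mu_1(\alpha,\lambda),\alpha,\lambda)$, only \emph{some} neighborhood on which its curve exhausts the nontrivial solutions, and a priori these neighborhoods could degenerate as $(\alpha,\lambda) \to (\alpha^*,\lambda^*)$, leaving part of $\Sigma$ unaccounted for. I would resolve this by noting that the Crandall--Rabinowitz construction behind \cite[Theorem 4.6]{Ehrnstroem2011} is itself a Lyapunov--Schmidt reduction plus implicit function theorem whose bounds are locally uniform in the bifurcation point, so its neighborhood of validity can be taken uniform for $(\alpha,\lambda)$ in a small neighborhood of $(\alpha^*,\lambda^*)$; shrinking $\mathcal{W}$ (that is, the $t$-range and the $(\alpha,\lambda)$-neighborhood) at the very end then puts every slice of $\Sigma$ entirely on the corresponding $\mathcal{C}_{\alpha,\lambda}$. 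Alternatively, one can bypass the pointwise appeal to \cite[Theorem 4.6]{Ehrnstroem2011} altogether: by its very construction $\mathcal{S}$ is the sheet produced by the parameter-dependent Lyapunov--Schmidt reduction, so the uniqueness in \Cref{LyaSch} already forces $\mathcal{S} = \Sigma$ locally; compare \cite[Section I.6]{Kielhoefer2012} for the local structure of the solution set near a simple bifurcation point.
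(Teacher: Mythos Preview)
Your argument is correct and reaches the same conclusion, but the route differs from the paper's. The paper works pointwise: for each $(\alpha,\lambda)$ near $(\alpha^*,\lambda^*)$ it applies \cite[Theorem~4.6]{Ehrnstroem2011} at the base point $(0,\mu_1(\alpha,\lambda),\alpha,\lambda)$, obtaining a uniqueness neighborhood $U(\Lambda)$ of radius $\delta(\Lambda)$ in the $(w,\mu)$-direction, and then simply observes that the regularity of the problem allows $\delta(\Lambda)$ to be taken constant after shrinking the $(\alpha,\lambda)$-neighborhood. The union $\bigcup U(\Lambda)$ is then an open neighborhood of $(0,\Lambda^*)$ in which $\mathcal{S}$ exhausts the nontrivial solutions. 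No single Lyapunov--Schmidt reduction at $\Lambda^*$ is performed.

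Your approach instead carries out one reduction at $\Lambda^*$ with all three parameters free, solves the reduced equation for $\mu=g(t,\alpha,\lambda)$ via the implicit function theorem, and identifies $g(0,\cdot,\cdot)$ with $\mu_1$. This yields more than the paper states: a smooth global parametrization of the entire local nontrivial solution set $\Sigma$ by $(t,\alpha,\lambda)$, not merely a covering by curves. The price is that you must then match $\Sigma$ with the family $\mathcal{S}$ defined via \cite[Theorem~4.6]{Ehrnstroem2011}, which you do slice by slice; the uniformity obstacle you flag is real and your resolution (uniform bounds in the Crandall--Rabinowitz construction, or recognizing that the uniqueness in \Cref{LyaSch} already forces the identification) is sound. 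The paper's argument sidesteps this matching step entirely by never constructing $\Sigma$, at the cost of the parametrization you obtain.
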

        \begin{proof}
            For each $\Lambda = (\mu_1(\alpha,\lambda),\alpha,\lambda)$ we have uniqueness in a set
            \[
                U(\Lambda) \coloneqq \brac*{ (w,\mu',\alpha,\lambda) \in X \times \mcU : \norm{w} \lor |\mu' - \mu_1(\alpha,\lambda)| < \delta(\Lambda) },
            \]
            in the sense that all nontrivial solutions in $U(\Lambda)$ are given by the solution curve obtained in \cite[Theorem 4.6]{Ehrnstroem2011}. Due to the regularity of the problem, and by possibly shrinking the neighborhood of $(\alpha^*,\lambda^*)$, we can assume that $\delta(\Lambda)$ is constant. It is then clear that the family $\mathcal{S}$ yields all the nontrivial solutions in the open neighborhood
            \[
                U \coloneqq \,\smashoperator{\bigcup_{\mu=\mu_1(\alpha,\lambda)}}\, U(\Lambda)
            \]
            of $(0,\Lambda^*)$, see \Cref{fig:local_description}.
        \end{proof}
        \begin{remark}
            We mentioned above that the same procedure can be performed using \Cref{1dBif} instead of \cite[Theorem 4.6]{Ehrnstroem2011} when the transversality condition is fulfilled. The implication is that, locally, the same solutions can be found through bifurcation with either $\mu$ or $\lambda$. It is not clear whether this is still the case for possible global solution curves.
        \end{remark}

\section{Two-dimensional bifurcation} \label{2dBifSection}
    For two-dimensional bifurcation we will use $\alpha$ as the second bifurcation parameter. We therefore need the following analogue of \Cref{IpCalcLemmalambda} for the parameter $\alpha$.
    \begin{lemma}
        \label{IpCalcLemmaalpha}
        Suppose that $\phi_n \in \ker \mcL(\Lambda)$, where $\phi_n$ is as defined in \eqref{phik}. Then, if $\tilde w_n = (\eta_{\phi_n},\phi_n)$ is the corresponding basis function of $Z$, we have
        \[
            \ip{D_{\alpha} \mcL(\Lambda) \phi_n}{\tilde{w}_n}_Y  = B \paren*{\frac{\sinh(\theta_n)}{\theta_n}}^2 + f(\theta_{n_j}),
        \]
        where
        \[
            B \coloneqq \frac{\pi}{\kappa} \brak[\bigg]{\frac{1}{\mu^2 \abs{\alpha}^2 \sin^2(\lambda)} - \frac{\cot(\lambda)}{2\abs{\alpha}^{1/2}}}
        \]
        and $f$ is defined by
        \[
            f(t) \coloneqq \frac{\pi}{\kappa} \begin{cases}\dfrac{t-\cosh(t)\sinh(t)}{2t^3} & t \neq 0,\\
                -\dfrac{1}{3} & t=0.
            \end{cases}
        \]
    \end{lemma}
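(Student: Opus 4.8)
The plan is to follow the proof of \Cref{IpCalcLemmalambda} closely, with one structural change: the factor $\Delta-\alpha$ in $\mcL_2$ depends on $\alpha$, so the second component of $D_\alpha\mcL(\Lambda)\phi$ is no longer zero, and it is precisely this component that produces the extra term $f$. First I would differentiate the explicit formula \eqref{mcLExp} for $\mcL(\Lambda)$ with respect to $\alpha$ at fixed $\phi$. Using $\partial_\alpha(1/\psi_{0s}) = -\psi_{0s\alpha}/\psi_{0s}^2$, this gives
\[
    D_\alpha\mcL(\Lambda)\phi = \paren*{\brak*{\psi_{0s\alpha}(1)\phi_s - \paren*{\psi_{0ss\alpha}(1) - \frac{\psi_{0s\alpha}(1)}{\psi_{0s}(1)^2}}\phi}_{s=1},\; -\phi}.
\]

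Next I would evaluate at $\phi=\phi_n$, using $\phi_n(x,1) = \cos(n\kappa x)\sinh(\theta_n)/\theta_n$ and $\phi_{ns}(x,1) = \cos(n\kappa x)\cosh(\theta_n)$. As in \Cref{IpCalcLemmalambda}, the kernel equation \eqref{KerEq} lets me replace $\cosh(\theta_n)$ by $l(n,\alpha)\sinh(\theta_n)/\theta_n$, so the first component of $D_\alpha\mcL(\Lambda)\phi_n$ collapses to $\tilde{B}\cos(n\kappa x)\sinh(\theta_n)/\theta_n$ with
\[
    \tilde{B} \coloneqq \psi_{0s\alpha}(1)\,l(n,\alpha) - \psi_{0ss\alpha}(1) + \frac{\psi_{0s\alpha}(1)}{\psi_{0s}(1)^2},
\]
which is independent of $n$ once one substitutes $l(n,\alpha)=r(\Lambda)$. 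Pairing this against $\eta_{\phi_n} = -\psi_{0s}(1)^{-1}\cos(n\kappa x)\sinh(\theta_n)/\theta_n$ and integrating in $x$ contributes $-\frac{\pi\tilde{B}}{\kappa\psi_{0s}(1)}\paren*{\sinh(\theta_n)/\theta_n}^2$, which will turn out to be the $B\paren*{\sinh(\theta_n)/\theta_n}^2$ term. For the contribution of the second component I would evaluate the elementary integral
\[
    -\int_0^1\int_0^{2\pi/\kappa}\phi_n^2\dx\ds = -\frac{\pi}{\kappa}\int_0^1\frac{\sinh^2(\theta_n s)}{\theta_n^2}\ds = \frac{\pi}{\kappa}\cdot\frac{\theta_n-\cosh(\theta_n)\sinh(\theta_n)}{2\theta_n^3},
\]
recognizing the right-hand side as $f(\theta_n)$, the case $\theta_n=0$ following by continuity from a Taylor expansion of $\sinh(2\theta_n)$.

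It then remains to identify the constant $B$. From $\psi_0(s) = \mu\cos\paren[\big]{\abs{\alpha}^{1/2}(s-1)+\lambda}$ and $\partial_\alpha\abs{\alpha}^{1/2} = -\tfrac12\abs{\alpha}^{-1/2}$, I would compute the boundary values
\[
    \psi_{0s}(1) = -\mu\abs{\alpha}^{1/2}\sin(\lambda),\qquad \psi_{0s\alpha}(1) = \frac{\mu\sin(\lambda)}{2\abs{\alpha}^{1/2}},\qquad \psi_{0ss\alpha}(1) = \mu\cos(\lambda);
\]
here the terms proportional to $(s-1)$ appearing in $\partial_\alpha\psi_{0s}$ and $\partial_\alpha\psi_{0ss}$ drop out at $s=1$. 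Substituting these, together with $l(n,\alpha)=r(\Lambda)$ from \eqref{eq:kernel_equation_rhs}, into $\tilde{B}$ and simplifying yields $\tilde{B} = \frac{1}{\mu\abs{\alpha}^{3/2}\sin(\lambda)} - \frac{\mu\cos(\lambda)}{2}$, whence
\[
    B = -\frac{\pi\tilde{B}}{\kappa\psi_{0s}(1)} = \frac{\pi}{\kappa}\brak*{\frac{1}{\mu^2\abs{\alpha}^2\sin^2(\lambda)} - \frac{\cot(\lambda)}{2\abs{\alpha}^{1/2}}},
\]
as claimed.

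The computation is largely bookkeeping. The only genuinely delicate points are handling $\partial_\alpha\abs{\alpha}^{1/2}$ correctly (and verifying that the resulting $(s-1)$-proportional terms vanish at $s=1$), and correctly isolating the new contribution $f(\theta_n)$, which has no counterpart in \Cref{IpCalcLemmalambda} because there $\mcL_2$ does not depend on the bifurcation parameter $\lambda$.
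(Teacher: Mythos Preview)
Your proof is correct and complete; the paper actually omits the proof of this lemma entirely, so there is nothing to compare against. Your approach—differentiating \eqref{mcLExp} in $\alpha$, isolating the new second-component contribution $-\phi_n$ that produces $f(\theta_n)$, and simplifying the boundary term via the kernel equation exactly as in \Cref{IpCalcLemmalambda}—is the natural one and all the intermediate computations (the values of $\psi_{0s\alpha}(1)$, $\psi_{0ss\alpha}(1)$, the reduction of $\tilde{B}$, and the evaluation of $\int_0^1\sinh^2(\theta_n s)\,ds$) check out.
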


    Suppose that $M(\Lambda^*) = \{n_1,n_2\}$, where $n_1 < n_2$, and that $\Lambda^*$ satisfies the transversality condition \eqref{lambdaNuisance} (which also appears for two-dimensional bifurcation). Denote the subspace of $X$ consisting of functions that have wavenumber $n\kappa$ in the horizontal variable by $X^{(n)}$. Then $\ker \mcL(\Lambda^*)|_{X^{(n_2)}} = \lspan{\{\phi_{n_2}^*\}}$. Using the local description from \Cref{LocClass1D}, we obtain the set $\mc S^{(n_2)}$ of all nontrivial solutions of \eqref{eq:F_problem} in a neighborhood of $(0,\Lambda^*)$ in $X^{(n_2)} \times \mcU$. Similarly, under the condition that $n_1 \nmid n_2$, the kernel of $\mcL(\Lambda)|_{X^{(n_1)}}$ is spanned by $\phi_{n_1}^*$, and we obtain the set $\mc S^{(n_1)}$ of all nontrivial solutions in a neighborhood of $(0,\Lambda^*)$ in $X^{(n_1)} \times \mcU$. The next result describes bimodal solutions near $(0,\Lambda^*)$, which are neither in $\mc S^{(n_1)}$ nor in $\mc S^{(n_2)}$.
    
    As mentioned before \Cref{IpCalcLemmaalpha}, we will use $\alpha$ as the second bifurcation parameter. Hence, we will look for solutions of the equation
    \begin{equation}
        \label{2dEq}
        \mcF(w,\mu^*,\alpha,\lambda) = 0
    \end{equation}
    for $w \in \mcO$ and $(\mu^*,\alpha,\lambda) \in \mcU$. Let therefore, for $j \in \{1,2\}$, the set $\mathcal S^{(n_j)}_{\mu^*}$ consist of all $(w,\alpha,\lambda)$ such that $(w,\mu^*,\alpha,\lambda) \in \mathcal S^{(n_j)}$.
    
    \begin{theorem}[Two-dimensional bifurcation]
        \label{2dBif}
        Suppose that $\Lambda^* \in \mcU$ is such that the transversality condition \eqref{lambdaNuisance} holds, and that
        \[
            \ker D_w \mcF(0,\Lambda^*) = \lspan{\{w_1^*, w_2^*\}},\quad w_j^* = \mcT(\Lambda^*) \phi_j^*,
        \]
        with $1 \leq n_1 < n_2$ and $\phi_j^* \coloneqq \phi_{n_j}^*$ as in \Cref{kernelLemma}. Furthermore, suppose that either $r(\Lambda^*) \notin \{0,1\}$ or $\theta(n_2,\alpha^*) = 0$ (in which case $r(\Lambda^*)=1$).
        \begin{enumerate}[(i)]
            \item If $n_1 \nmid n_2$, there exists a smooth family of nontrivial small-amplitude solutions
            \[
                \mcS_{\mu^*} \coloneqq \brac*{ \paren*{ \overline{w}(t_1,t_2),\overline{\alpha}(t_1,t_2), \overline{\lambda}(t_1,t_2) }: 0 < \abs{(t_1,t_2)} < \varepsilon }
            \]
            of \eqref{2dEq} in $\mcO \times (-\infty,0) \times (0,\pi)$, passing through $(0,\alpha^*,\lambda^*)$ when $(t_1,t_2) =0$, with
            \begin{equation}
                \label{wbar2dt}
                \overline{w}(t_1,t_2) = t_1 w_1^* + t_2 w_2^* + O(\abs{(t_1,t_2)}^2) \quad \text{in $X$ as $(t_1,t_2) \to 0$.}
            \end{equation}
            In a neighborhood of $(0,\alpha^*,\lambda^*)$ in $\mcO \times (-\infty,0) \times (0,\pi)$, the union $\mcS_{\mu^*} \cup \mcS_{\mu^*}^{(n_1)} \cup \mcS_{\mu^*}^{(n_2)}$ captures all nontrivial solutions of \eqref{2dEq}.
            
            \item Let $0 < \delta < 1$. If $n_1 \mid n_2$, there exists a smooth family of nontrivial small-amplitude solutions
            \[
                \mcS_{\mu^*}^\delta \coloneqq \brac*{ \paren*{ \overline{w}(r,v),\overline{\alpha}(r,v),\overline{\lambda}(r,v) }: 0 < r < \varepsilon,\, \abs{\sin(v)} >\delta}
            \]
            of \eqref{2dEq} in $\mcO \times (-\infty,0) \times (0,\pi)$, passing through $(0,\alpha^*,\lambda^*)$ when $r=0$, with
            \[
                \overline{w}(r,v) = r \cos(v) w_1^* + r \sin(v) w_2^* + O(r^2)\quad \text{in $X$ as $r \to 0$.}
            \]
            In a neighborhood of $(0,\alpha^*,\lambda^*)$ in $\mcO \times (-\infty,0) \times (0,\pi)$, the set $\mcS_{\mu^*}^\delta \cup \mcS_{\mu^*}^{(n_2)}$ contains all nontrivial solutions of \eqref{2dEq} such that $\abs{\sin(v)}>\delta$ in their projection $r \cos(v) w_1^*+r\sin(v) w_2^*$ on $\ker D_w \mcF(0,\Lambda^*)$ along $X_0$.
        \end{enumerate}
    \end{theorem}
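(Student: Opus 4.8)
The plan is to run a two-parameter version of the argument in the proof of \Cref{1dBif}, taking $(\alpha,\lambda)$ as the pair of bifurcation parameters and keeping $\mu=\mu^*$ fixed. By \Cref{LyaSch}, solving \eqref{2dEq} near $(0,\Lambda^*)$ is equivalent to solving $\Phi(t,\mu^*,\alpha,\lambda)=0$ for $t=(t_1,t_2)\in\R^2$ (the coordinates of the kernel projection along $w_1^*,w_2^*$), where $\Phi=(\Phi^{(1)},\Phi^{(2)})$ is $Z$-valued with $Z=\lspan\{\tilde w_1^*,\tilde w_2^*\}$ and $\Phi^{(j)}$ is the component along $\tilde w_j^*=\tilde w_{n_j}^*$. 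Exactly as in the one-dimensional case, and using the properties of $\psi$ from \Cref{LyaSch} together with the fact that $\Pi_Z$ annihilates $\ran D_w\mcF(0,\Lambda^*)$, one obtains $\Phi_{t_i}(0,\Lambda^*)=0$, $\Phi_{t_i\lambda}(0,\Lambda^*)=\Pi_Z D_\lambda\mcL(\Lambda^*)\phi_i^*$ and $\Phi_{t_i\alpha}(0,\Lambda^*)=\Pi_Z D_\alpha\mcL(\Lambda^*)\phi_i^*$. Since $\mcF$ and the whole reduction preserve the lattice of horizontal wavenumbers, setting $t_1=0$ leaves only multiples of $n_2$ in $w=t_2w_2^*+\psi(t_2w_2^*,\Lambda)$, and as $0<n_1<n_2$ the mode $n_1$ is absent; hence $\Phi^{(1)}(0,t_2,\Lambda)=0$ and a Hadamard-type integral gives $\Phi^{(1)}=t_1\Psi^{(1)}$ with $\Psi^{(1)}$ smooth. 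In case (i), where $n_1\nmid n_2$, the symmetric argument (the mode $n_2$ never being a multiple of $n_1$) likewise gives $\Phi^{(2)}=t_2\Psi^{(2)}$.

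In case (i), a nontrivial solution with $t_1t_2\neq 0$ then solves $\Psi\coloneqq(\Psi^{(1)},\Psi^{(2)})=0$, and $\Psi^{(j)}(0,\Lambda^*)=\Phi^{(j)}_{t_j}(0,\Lambda^*)=0$. I would solve $\Psi(t,\mu^*,\alpha,\lambda)=0$ for $(\alpha,\lambda)=(\overline\alpha(t),\overline\lambda(t))$ near $(\alpha^*,\lambda^*)$ by the implicit function theorem, which requires invertibility of $D_{(\alpha,\lambda)}\Psi(0,\Lambda^*)$, whose $j$-th row is $(\Phi^{(j)}_{t_j\alpha}(0,\Lambda^*),\Phi^{(j)}_{t_j\lambda}(0,\Lambda^*))$, read off via \eqref{PiZ}. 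All off-diagonal inner products $\ip{D_\lambda\mcL(\Lambda^*)\phi_i^*}{\tilde w_j^*}_Y$ and $\ip{D_\alpha\mcL(\Lambda^*)\phi_i^*}{\tilde w_j^*}_Y$ with $i\neq j$ vanish by orthogonality of distinct horizontal Fourier modes, so by \Cref{IpCalcLemmalambda,IpCalcLemmaalpha} the determinant of this matrix is a positive multiple of
\[
A\brak[\big]{ f(\theta_{n_1})\paren[\big]{\tfrac{\sinh\theta_{n_2}}{\theta_{n_2}}}^{2} - f(\theta_{n_2})\paren[\big]{\tfrac{\sinh\theta_{n_1}}{\theta_{n_1}}}^{2} },
\]
with $A$ the constant of \Cref{IpCalcLemmalambda}. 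Using $l(n_j,\alpha^*)=r(\Lambda^*)$ and the identity $\sinh^{-2}\theta-(\coth\theta)/\theta=(l^2-l)/\theta^2-1$ with $l=\theta\coth\theta$, one finds that for $\theta(n_2,\alpha^*)\neq0$ the bracket is a nonzero multiple of $r(\Lambda^*)(r(\Lambda^*)-1)$, while if $\theta(n_2,\alpha^*)=0$ — which forces $r(\Lambda^*)=1$ — a short continuity computation shows the bracket is a nonzero constant. Together with $A\neq0$ (the transversality condition) and the hypothesis that $r(\Lambda^*)\notin\{0,1\}$ or $\theta(n_2,\alpha^*)=0$, the matrix is invertible. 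Shrinking $\varepsilon$ keeps the resulting family inside $\mcU$, and \eqref{wbar2dt} follows from $\overline w=t_1w_1^*+t_2w_2^*+\psi(\cdot)$ with $\psi=O(\abs{(t_1,t_2)}^2)$.

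For completeness in (i): any nontrivial solution of $\Phi=0$ has $t_1=0$ (then $w$ is $2\pi/(n_2\kappa)$-periodic and lies in $\mcS_{\mu^*}^{(n_2)}$ by \Cref{thm:local_description_1d} applied in $X^{(n_2)}$), or $t_2=0$ (then, since $n_1\nmid n_2$ makes the kernel in $X^{(n_1)}$ one-dimensional, $w\in\mcS_{\mu^*}^{(n_1)}$), or $t_1t_2\neq0$ (then $\Psi=0$ and implicit-function-theorem uniqueness places the solution in $\mcS_{\mu^*}$). For case (ii), where $n_1\mid n_2$, $\Phi^{(2)}$ no longer factors through $t_2$, so I would set $t_1=r\cos v$, $t_2=r\sin v$. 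Then $\Phi^{(1)}=r\cos v\,\Psi^{(1)}(r\cos v,r\sin v,\Lambda)$, while $\Phi^{(2)}$ vanishes at $r=0$ and hence equals $r\,G(r,v,\Lambda)$ with $G$ smooth and $G(0,v,\Lambda)=\cos v\,\Phi^{(2)}_{t_1}(0,\Lambda)+\sin v\,\Phi^{(2)}_{t_2}(0,\Lambda)$. A nontrivial bimodal solution ($r\neq0$, $\cos v\neq0$) solves $\Psi^{(1)}(r\cos v,r\sin v,\Lambda)=0$ and $G(r,v,\Lambda)=0$; both vanish at $r=0$, $\Lambda=\Lambda^*$ for every $v$, and the $(\alpha,\lambda)$-Jacobian of this system at $r=0$ equals $\sin v$ times (a positive multiple of) the same determinant as before — the extra factor $\sin v$ appearing because $\Phi^{(2)}_{t_1\lambda}(0,\Lambda^*)=\Phi^{(2)}_{t_1\alpha}(0,\Lambda^*)=0$ by mode orthogonality — so it is invertible uniformly on $\abs{\sin v}\geq\delta$, producing $\mcS_{\mu^*}^\delta$. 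Conversely, a nontrivial solution with $\abs{\sin v}>\delta$ in its kernel projection has $\Phi^{(1)}=0$, hence either $\cos v=0$ (a $2\pi/(n_2\kappa)$-periodic solution in $\mcS_{\mu^*}^{(n_2)}$) or $\Psi^{(1)}=0$, which with $G=0$ and uniqueness places it in $\mcS_{\mu^*}^\delta$; one cannot include a neighborhood of $v\in\{0,\pi\}$ since $X^{(n_1)}$ then carries both kernel directions and no curve $\mcS_{\mu^*}^{(n_1)}$ is available there.

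I expect the main obstacle to be the determinant computation together with the bookkeeping in case (ii): one must identify precisely which reduced solutions are genuinely bimodal versus pure single-mode waves already covered by \Cref{thm:local_description_1d}, and check that the restriction $\abs{\sin v}>\delta$ is exactly what the polar reduction forces. The algebraic heart is the reduction of the $2\times2$ determinant to a nonzero multiple of $r(\Lambda^*)(r(\Lambda^*)-1)$ — whence the hypothesis on $r(\Lambda^*)$, with the degenerate case $\theta(n_2,\alpha^*)=0$ requiring the separate continuity argument.
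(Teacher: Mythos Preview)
Your proposal is correct and follows essentially the same route as the paper: Lyapunov--Schmidt reduction, factoring $\Phi^{(j)}=t_j\Psi^{(j)}$ via the invariance of the reduction under the sublattices $X^{(n_j)}$ (the paper makes this step precise by running a second Lyapunov--Schmidt reduction inside $X^{(n_j)}$ and invoking uniqueness of $\psi$), applying the implicit function theorem in $(\alpha,\lambda)$, and reducing the $2\times2$ determinant via \Cref{IpCalcLemmalambda,IpCalcLemmaalpha} to a nonzero multiple of $r(\Lambda^*)(r(\Lambda^*)-1)$, with the polar-coordinate variant and the extra $\sin v$ factor in case~(ii). Apart from a harmless overall sign in your bracket and the slightly informal justification of $\Phi^{(1)}(0,t_2,\Lambda)=0$, your argument matches the paper's proof.
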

    \begin{proof}
        The Lyapunov--Schmidt reduction in \Cref{LyaSch} tells us that there is a neighborhood of $(0,\Lambda^*)$ in $X \times \mcU$ in which \eqref{eq:F_problem} is equivalent to the equation $\Phi(t_1,t_2,\Lambda)= 0$, where
        \begin{equation}
            \label{Phi2dDef}
            \Phi(t_1,t_2,\Lambda) \coloneqq \Pi_Z \mcF(t_1 w^*_1 + t_2 w^*_2 + \psi(t_1 w^*_1+t_2 w^*_2,\Lambda),\Lambda).      
        \end{equation}
        Recall that $Z = \lspan{\{\tilde{w}_1^*,\tilde{w}_2^*\}}$, where $\tilde{w}_j \coloneqq (\eta_{\phi_j^*},\phi_j^*)$. If we let $\Pi_j$ denote the projection onto the span of $\tilde{w}_j^*$ along the image of $\mcL(\Lambda^*)$, then $\Pi_Z = \Pi_1 + \Pi_2$. Defining $\Phi_j \coloneqq \Pi_j \Phi$ for $j=1,2$, the equation $\Phi(t_1,t_2,\Lambda) = 0$ can then be rewritten as the system of equations
        \begin{equation}
            \label{eq:Phi_1_Phi_2}
            \Phi_1(t_1,t_2,\Lambda) = 0, \qquad \Phi_2(t_1,t_2,\Lambda) = 0.
        \end{equation}
        
        Let us first consider case \emph{(i)}, where $n_1 \nmid n_2$. We claim that
        \begin{equation}
            \label{Phi(i)equas}
            \Phi_1(0,t_2,\Lambda) =  0, \qquad \Phi_2(t_1,0,\Lambda) = 0,
        \end{equation}
        for all $t_1$, $t_2$ and $\Lambda$. We will show the first identity; the proof of the second being similar. Using that $\ker D_w \mcF(0,\Lambda^*)|_{X^{(n_2)}} = \lspan{\{w_2^*\}}$, an application of the Lyapunov--Schmidt reduction in $X^{(n_2)}$ yields a function $\tilde{\psi}$ mapping into $X_0^{(n_2)}$ satisfying
        \begin{equation} \label{starArgument}
            (I-\Pi_2) \mcF(t_2 w_2^*+\tilde{\psi}(t_2 w^*_2,\Lambda),\Lambda) = 0,
        \end{equation}
        for all $t_2$ in a neighborhood of $0 \in \R$. Due to the $2\pi/(n_2\kappa)$-periodicity of $\mcF(t w_2^*+\tilde{\psi}(t w^*_2,\Lambda),\Lambda)$, \eqref{starArgument} holds with $\Pi_2$ replaced by $\Pi_Z$, whence uniqueness of $\psi$ yields $\psi(t_2w_2^*) = \tilde{\psi}(t_2w_2^*)$. The first identity in \Cref{Phi(i)equas} then follows by definition of $\Phi_1$.
        
        Following the proof of \Cref{1dBif}, we now introduce the function $\Psi_1$ by
        \begin{equation}
            \label{PsiDef}
            \Psi_1(t_1,t_2,\Lambda) \coloneqq \int_0^1 \Phi_{1t_1}(zt_1,t_2,\Lambda)\dz,
        \end{equation}
        and similarly the function $\Psi_2$. Then $\Phi_j = t_j \Psi_j$ by \eqref{Phi(i)equas}, and we get different cases for \eqref{eq:Phi_1_Phi_2} depending on the values of $t_1$ and $t_2$. If both $t_1$ and $t_2$ vanish, we get trivial solutions. When $t_2 = 0$ but $t_1 \neq 0$, we know from \eqref{Phi(i)equas} that the system reduces to $\Phi_1(t_1,0,\Lambda) = 0$, the solutions of which correspond to $\mc S^{(n_1)}$, while when $t_1 = 0$ but $t_2 \neq 0$ we get solutions of \eqref{eq:F_problem} that lie in $\mc S^{(n_2)}$. The remaining case is where both $t_1$ and $t_2$ are allowed to be nonzero, which amounts to investigating the solutions of
        \[
            \Psi_1(t_1,t_2,\Lambda) = 0, \qquad \Psi_2(t_1,t_2,\Lambda) = 0
        \]
        in a neighborhood of $(0,0,\Lambda^*)$. We will look at those solutions with $\mu = \mu^*$.
        
        For this, we intend to use the implicit function theorem. Note that from \eqref{PsiDef} we get $\Psi_j(0,0,\Lambda^*) = \Pi_j \Phi_{t_j}(0,0,\Lambda^*)$, and that by definition \eqref{Phi2dDef} of $\Phi$ we have
        \[
            \Phi_{t_j}(t_1,t_2,\Lambda^*) = \Pi_Z D_w\mcF(w + \psi(w,\Lambda^*),\Lambda^*)(w_j^*+D_w\psi(w,\Lambda^*)w_j^*),
        \]
        where $w \coloneqq t_1 w^*_1 + t_2 w^*_2$. Using the properties of $\psi$ and \Cref{mcLisFred}, we can therefore conclude that $\Psi_j(0,0,\Lambda^*) = 0$. It remains to show that the derivative $D_{(\alpha,\lambda)}(\Psi_1,\Psi_2)(0,0,\Lambda^*)$ is invertible.
        
        Using analogous computations to those after \Cref{eq:Phi_t_lambda} in the proof of \Cref{1dBif}, we have
        \[
            \Psi_{j\beta}(0,0,\Lambda^*) = \Pi_j D_\beta \mcL(\Lambda^*)\phi_j =\frac{\ip{D_\beta \mcL(\Lambda^*)\phi_j}{\tilde{w}_j^*}_Y}{\norm{\tilde{w}_j^*}_Y^2}\tilde{w}_j^*
        \]
        for $j=1,2$ and $\beta = \alpha,\lambda$. It follows that $D_{(\alpha,\lambda)}(\Psi_1,\Psi_2)(0,0,\Lambda^*)$ is invertible if and only if the determinant
        \begin{equation}
            \label{eq:2d_determinant}
            C \coloneqq \begin{vmatrix}
                \ip{D_{\lambda} \mcL(\Lambda^*) \phi_1^*}{\tilde{w}_1^*}_Y & \ip{D_{\lambda} \mcL(\Lambda^*) \phi_2^*}{\tilde{w}_2^*}_Y \\[2pt]
                \ip{D_{\alpha} \mcL(\Lambda^*) \phi_1^*}{\tilde{w}_1^*}_Y & \ip{D_{\alpha} \mcL(\Lambda^*) \phi_2^*}{\tilde{w}_2^*}_Y
            \end{vmatrix}
        \end{equation}
        is nonzero. The inner products appearing in this determinant have already been computed in \Cref{IpCalcLemmalambda} and \Cref{IpCalcLemmaalpha}. Using these, and elementary properties of determinants, we have
        \[
            C = A \paren[\bigg]{ \paren*{ \frac{\sinh(\theta_{n_1})}{\theta_{n_1}} }^2 f(\theta_{n_2}) - \paren*{ \frac{\sinh(\theta_{n_2})}{\theta_{n_2}} }^2 f(\theta_{n_1}) }.
        \]
        
        Observe that of $\theta_{n_1}$ and $\theta_{n_2}$, only $\theta_{n_2}$ can vanish. Suppose for the moment that also $\theta_{n_2} \neq 0$. Then
        \begin{align*}
            f(\theta_{n_j}) &= \frac{\pi}{2\kappa\theta_{n_j}^2}\paren*{\frac{\sinh(\theta_{n_j})}{\theta_{n_j}}}^2\paren[\bigg]{\paren*{\frac{\theta_{n_j}}{\sinh(\theta_{n_j})}}^2 - \theta_{n_j}\frac{\cosh(\theta_{n_j})}{\sinh(\theta_{n_j})}}\\
            &=\frac{\pi}{2\kappa\theta_{n_j}^2}\paren*{\frac{\sinh(\theta_{n_j})}{\theta_{n_j}}}^2\paren[\big]{r(\Lambda^*)^2-\theta_{n_j}^2 -r(\Lambda^*)},
        \end{align*}
        whence the determinant in \Cref{eq:2d_determinant} can be written as
        \[
            \label{eq:2d_determinant_simplified}
            C = \frac{\pi A}{2\kappa}\paren*{\frac{\sinh(\theta_{n_1})}{\theta_{n_1}}}^2\paren*{\frac{\sinh(\theta_{n_2})}{\theta_{n_2}}}^2 r(\Lambda^*)(r(\Lambda^*)-1)\paren[\bigg]{\frac{1}{\theta_{n_2}^2}-\frac{1}{\theta_{n_1}^2}},
        \]
        where $A$ is nonzero due to the assumption of transversality. Hence, we immediately see that $C$ is nonzero if and only if $r(\Lambda^*) \notin \{0,1\}$. A similar computation shows that
        \[
            C = \frac{\pi A}{6\kappa}\paren*{\frac{\sinh(\theta_{n_1})}{\theta_{n_1}}}^2 \neq 0
        \]
        when $\theta_{n_2} = 0$. This concludes the proof of part $(i)$.
        
        Next, we move on to case \emph{(ii)}, where $n_1 \mid n_2$. We still find that
        \[
            \Phi_1(0,t_2,\Lambda) = 0
        \]
        for all $t_2$ and $\Lambda$, and so we can introduce
        \[
            \Psi_1(r,v,\Lambda) \coloneqq \int_0^1 \Phi_{1t_1}(zr\cos(v),r\sin(v),\Lambda)\dz
        \]
        as before; only now written using the polar coordinates $(t_1,t_2)=re^{iv}$ (identifying $\C$ and $\R^2$). Then $\Phi_1 = t_1\Psi_1$. For $\Phi_2$, the corresponding identity in \eqref{Phi(i)equas} is no longer true in general, but we still have $\Phi_2(0,0,\Lambda) = 0$. We therefore introduce $\Psi_2$ through 
        \[
            \Psi_2(r,v,\Lambda) \coloneqq \int_0^1 \brak*{\Phi_{2t_1}(zre^{iv},\Lambda) \cos(v) + \Phi_{2t_2}(zre^{iv},\Lambda)\sin(v)} \mathrm dz,
        \]
        which yields $\Phi_2 = r \Psi_2$.
        
        Like for case \emph{(i)}, the solutions of $\Phi(0,t_2,\Lambda) = 0$ near $(0,\Lambda^*)$ for $t_2 \neq 0$ correspond to solutions in $S^{(n_2)}$. When $t_1 \neq 0$, also $r \neq 0$, and so \eqref{eq:Phi_1_Phi_2} is equivalent to the problem
        \[
            \Psi_1(r,v,\Lambda)= 0,\qquad \Psi_2(r,v,\Lambda)=0,
        \]
        which we will now consider. Again, we will use the implicit function theorem to find solutions with $\mu=\mu^*$. Due to similar computations as those for case \emph{(i)}, we have $\Psi_1(0,v,\Lambda^*) = \Psi_2(0,v,\Lambda^*) =0$ and
        \begin{equation}
            \label{eq:Psi_1_derivatives_case_2}
            \Psi_{1\beta}(0,v,\Lambda^*) = \frac{\ip{D_{\beta} \mcL(\Lambda^*) \phi_{1}^*}{\tilde{w}_1^*}_Y}{\norm{\tilde{w}_1^*}_Y^2}\tilde{w}_1^*
        \end{equation}
        for all $v$ and $\beta = \alpha,\lambda$. To find the derivatives of $\Psi_2$, note that
        \[
            \Psi_{2\beta}(0,v,\Lambda^*) =\Phi_{2t_1\beta}(0,0,\Lambda^*) \cos(v) + \Phi_{2t_2\beta}(0,0,\Lambda^*) \sin(v),
        \]
        and so
        \begin{equation}
            \label{eq:Psi_2_derivatives_case_2}
            \begin{aligned}
                \Psi_{2\beta}(0,v,\Lambda^*) &= \frac{\ip{D_{\beta} \mcL(\Lambda^*) (\cos(v)\phi_1^* + \sin(v)\phi_2^*)}{\tilde{w}_2^*}_Y}{\norm{\tilde{w}_2^*}_Y^2}\tilde{w}_2^*\\
                &= \frac{\ip{D_{\beta} \mcL(\Lambda^*) \phi_2^*}{\tilde{w}_2^*}_Y}{\norm{\tilde{w}_2^*}_Y^2} \sin(v)\tilde{w}_2^*,
            \end{aligned}
        \end{equation}
        where we have used that $\cos(n_1 \kappa x)$ and $\cos(n_2 \kappa x)$ are orthogonal in $L_\kappa^2(\R)$.
        
        From the preceding, we see that the derivative $D_{(\alpha,\lambda)}(\Psi_1,\Psi_2)(0,v,\Lambda^*)$ is invertible if and only if the determinant
        \begin{equation}
            \label{eq:2d_determinant_case_2}
            \begin{aligned}
                \tilde{C} &\coloneqq \begin{vmatrix}
                    \ip{D_{\lambda} \mcL(\Lambda^*) \phi_1^*}{\tilde{w}_1^*}_Y & \ip{D_{\lambda} \mcL(\Lambda^*) \phi_2^*}{\tilde{w}_2^*}_Y\sin(v) \\[2pt]
                    \ip{D_{\alpha} \mcL(\Lambda^*) \phi_1^*}{\tilde{w}_1^*}_Y & \ip{D_{\alpha} \mcL(\Lambda^*) \phi_2^*}{\tilde{w}_2^*}_Y\sin(v)\\
                \end{vmatrix}\\
                &= C\sin(v)
            \end{aligned}
        \end{equation}
        is nonzero, where $C$ is the determinant introduced in \eqref{eq:2d_determinant}. We know that $C \neq 0$ under the assumptions of the theorem, so we can apply the implicit function theorem at $(0,v)$ if $\sin(v) \neq 0$. This can be done uniformly in $v$ as long as $\sin(v)$ is bounded away from zero.
    \end{proof}
    
    \begin{remark}
        In case $(i)$, the surface profiles in $\mcS_{\mu^*} \setminus \cup_j \mcS_{\mu^*}^{(n_j)}$ have multiple crests and troughs in each minimal period, at least when $(t_1,t_2)$ is sufficiently small. This follows from the asymptotic formula in \eqref{wbar2dt}.
    \end{remark}
    
    \begin{remark}
        Observe that the second special case listed in \Cref{example:explicit_kernels} has $n_1 \mid n_2$, while the third has $n_1 \nmid n_2$. They therefore fall into different cases in \Cref{2dBif}.
    \end{remark}
    
    \subsection{Properties of the bifurcation sheet}
        We will now present some properties of the sheets of solutions that were found in the two-dimensional bifurcation result, \Cref{2dBif}, following the lines of \Cref{1dTaylorSec}. The main purpose of this is to show that these sheets, found by bifurcating with respect to $\lambda$ and $\alpha$, do not, in general, coincide with the sheets found in \cite[Theorem 4.8]{Ehrnstroem2011}. Like for one-dimensional bifurcation, \Cref{2dBif} differs from the one in \cite{Ehrnstroem2011} by the use of $\lambda$ instead of $\mu$, and the addition of the transversality condition \eqref{lambdaNuisance}.
        
        The first step towards showing that the sheets differ is \Cref{gradZero}, which is the two-dimensional counterpart of \Cref{lambdaDotZero}.
        
        \begin{proposition}[Gradients of $\overline{\alpha}$ and $\overline{\lambda}$] \label{gradZero}
            For the solution sheets obtained in \Cref{2dBif}, we have the following:
            \begin{itemize}
                \item In case (i), the solutions satisfy
                \[
                    \nabla \overline{\alpha}(0,0) = \nabla \overline{\lambda}(0,0) = 0.
                \]
                \item In case (ii), we have
                \[
                    \overline{\alpha}_r(0,v) = \overline{\lambda}_r(0,v) = 0,
                \]
                as long as $n_2 \neq 2 n_1$.
            \end{itemize}
        \end{proposition}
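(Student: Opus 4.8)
The plan is to run the argument of \Cref{lambdaDotZero} one dimension up, working with the reduced systems that already appeared in the proof of \Cref{2dBif}. \emph{Case (i).} I would differentiate the defining identities $\Psi_j(t_1,t_2,\mu^*,\overline{\alpha}(t_1,t_2),\overline{\lambda}(t_1,t_2)) = 0$, $j = 1,2$, with respect to $t_1$ and to $t_2$, and evaluate at $(t_1,t_2) = (0,0)$. This produces four linear relations for the four unknowns $\partial_{t_i}\overline{\alpha}(0,0)$ and $\partial_{t_i}\overline{\lambda}(0,0)$, $i=1,2$; since the coefficient matrix is $D_{(\alpha,\lambda)}(\Psi_1,\Psi_2)(0,0,\Lambda^*)$, which is invertible precisely because the determinant $C$ of \eqref{eq:2d_determinant} is nonzero under the transversality hypothesis, it suffices to show that the inhomogeneous terms $\partial_{t_i}\Psi_j(0,0,\Lambda^*)$ all vanish.

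Exactly as in the computation following \eqref{eq:Phi_t_lambda}, the contributions of the Lyapunov--Schmidt function $\psi$ drop out because $\Pi_Z$ projects along $\ran D_w\mcF(0,\Lambda^*)$, and from \eqref{PsiDef} each $\partial_{t_i}\Psi_j(0,0,\Lambda^*)$ reduces to a term of the form $\Pi_j D_w^2\mcF(0,\Lambda^*)(w_a^*,w_b^*)$ with $a,b\in\{1,2\}$ — concretely the pure second derivatives $\tfrac12\Phi_{1t_1t_1}$, $\tfrac12\Phi_{2t_2t_2}$ and the mixed ones $\Phi_{jt_1t_2}$, evaluated at the origin. By \eqref{PiZ} this amounts to the inner products $\ip{D_w^2\mcF(0,\Lambda^*)(w_a^*,w_b^*)}{\tilde{w}_j^*}_Y$. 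Here I invoke the frequency version of the parity argument from \Cref{lambdaDotZero}: using the derivatives of $\mcF$ in \Cref{appendix} and the separable form \eqref{phik} of the $\phi_n^*$, every term in $D_w^2\mcF(0,\Lambda^*)(w_a^*,w_b^*)$ has horizontal dependence a product of trigonometric functions at frequencies $n_a\kappa$ and $n_b\kappa$, hence a linear combination of $\cos((n_a+n_b)\kappa x)$ and $\cos(\abs{n_a-n_b}\kappa x)$; pairing against the factor $\cos(n_j\kappa x)$ carried by $\tilde{w}_j^*$ and integrating over one period gives $0$ unless $n_j\in\{n_a+n_b,\abs{n_a-n_b}\}$. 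Running through the triples $(a,b,j)$ that occur, each surviving possibility forces either $n_1=0$, impossible since $1\le n_1$, or $n_2=2n_1$. In case (i) the hypothesis $n_1\nmid n_2$ rules out $n_2=2n_1$, so every $\partial_{t_i}\Psi_j(0,0,\Lambda^*)$ vanishes and hence $\nabla\overline{\alpha}(0,0)=\nabla\overline{\lambda}(0,0)=0$.

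\emph{Case (ii).} The argument is the same with the polar variables $(r,v)$ in place of $(t_1,t_2)$: I differentiate $\Psi_j(r,v,\mu^*,\overline{\alpha}(r,v),\overline{\lambda}(r,v))=0$ in $r$ at $r=0$, and since $D_{(\alpha,\lambda)}(\Psi_1,\Psi_2)(0,v,\Lambda^*)=\tilde{C}=C\sin(v)\ne0$ for $\abs{\sin(v)}>\delta$, it suffices to show $\Psi_{jr}(0,v,\Lambda^*)=0$. From the polar definitions of $\Psi_1$ and $\Psi_2$ in the proof of \Cref{2dBif}, these $r$-derivatives are fixed linear combinations (with coefficients built from $\cos v$ and $\sin v$) of the quantities $\Phi_{jt_at_b}(0,0,\Lambda^*)$, i.e.\ of the same inner products $\ip{D_w^2\mcF(0,\Lambda^*)(w_a^*,w_b^*)}{\tilde{w}_j^*}_Y$ as before. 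The one difference is that, when $n_1\mid n_2$, the value $n_2=2n_1$ is no longer excluded by divisibility, so the cross term $\ip{D_w^2\mcF(0,\Lambda^*)(w_1^*,w_2^*)}{\tilde{w}_1^*}_Y$ — and, symmetrically, $\ip{D_w^2\mcF(0,\Lambda^*)(w_1^*)^2}{\tilde{w}_2^*}_Y$ — is killed by the parity argument only under the extra assumption $n_2\ne2n_1$; this is exactly the hypothesis recorded in the statement. Granting it, all the relevant inner products vanish and $\overline{\alpha}_r(0,v)=\overline{\lambda}_r(0,v)=0$.

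The main obstacle is the bookkeeping in the parity/frequency argument: one must check that the only way the quadratic part of $\mcF$ can resonate with a basis vector of $Z$ is through the second harmonic $n_2=2n_1$, and that no other combination of $n_1,n_2$ with $1\le n_1<n_2$ produces a surviving zero-frequency contribution after integration against $\cos(n_j\kappa x)$. Once this is settled, the two cases differ only in whether that single resonance is already forbidden by the standing divisibility hypothesis on $n_1$ and $n_2$.
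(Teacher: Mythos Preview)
Your proposal is correct and follows essentially the same approach as the paper: differentiate the defining identities for $\overline{\alpha}$ and $\overline{\lambda}$, use invertibility of $D_{(\alpha,\lambda)}(\Psi_1,\Psi_2)$ to reduce to showing the inhomogeneous terms vanish, identify these with the second derivatives $\Phi_{lt_it_j}(0,0,\Lambda^*)=\Pi_l D_w^2\mcF(0,\Lambda^*)(w_i^*,w_j^*)$, and then run the frequency/orthogonality argument to see that a nonzero contribution requires $n_l\in\{n_i+n_j,\abs{n_i-n_j}\}$, which forces $n_2=2n_1$. The paper merely reverses the order of presentation, doing case~\emph{(ii)} in detail and declaring case~\emph{(i)} the simpler variant.
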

        \begin{proof}
            The proof for case \emph{(i)} is a simpler variant of that for case \emph{(ii)}, so we focus on the latter. By definition of $\overline \alpha$ and $\overline \lambda$, we have the identity
            \[
                \Psi_j(r,v,\mu^*,\overline \alpha(r,v), \overline \lambda(r,v)) = 0
            \]
            for $j = 1,2$. Through taking derivatives with respect to $r$, this implies that
            \begin{equation}
                \label{eq:derivative_system_case_2}
                \Psi_{jr} + \Psi_{j\alpha} \overline \alpha_r(0,v) + \Psi_{j\lambda} \overline{\lambda}_r(0, v) = 0,
            \end{equation}
            where the derivatives of $\Psi_j$ are evaluated at $(0,v,\Lambda^*)$. This linear system of equations can be solved for $\overline{\alpha}_r(0,v)$ and $\overline{\lambda}_r(0,v)$ because the determinant $\tilde{C}$ in \Cref{eq:2d_determinant_case_2} is nonzero. In order to show that the derivatives vanish, it is therefore sufficient (and necessary) to show that $\Psi_{jr}(0,v,\Lambda^*)=0$ for $j=1,2$.
            
            Using the definitions of $\Psi_1$ and $\Psi_2$, we find
            \begin{align*}
                \Psi_{1r}(0,v,\Lambda^*) &= \frac{1}{2} \cos(v) \Phi_{1t_1 t_1}(0,0,\Lambda^*) + \sin(v) \Phi_{1t_1 t_2}(0,0,\Lambda^*), \\
                \Psi_{2r}(0,v,\Lambda^*) &= \begin{multlined}[t] \frac{1}{2} \cos^2(v) \Phi_{2t_1 t_1}(0,0,\Lambda^*) + \frac{1}{2} \sin^2(v) \Phi_{2t_2 t_2}(0,0,\Lambda^*) \\[3pt] + \sin(v) \cos(v) \Phi_{2t_1 t_2}(0,0,\Lambda^*),\end{multlined}
            \end{align*}    
            where
            \begin{equation}
                \label{ijlDerivative}
                \begin{aligned}
                    \Phi_{lt_it_j}(0,0,\Lambda^*) &= \Pi_l  D^2_w \mathcal{F}(0,\Lambda^*)(w_i^*, w_j^*)\\
                    &=\frac{\ip[\big]{D_w^2 \mcF(0,\Lambda^*)(w_i^* ,w_j^*)}{\tilde{w}_l^*}_Y}{\norm{\tilde{w}_l^*}_Y^2}\tilde{w}_l^*
                \end{aligned}
            \end{equation}
            for $i,j,l=1,2$. Using orthogonality in $L^2_\kappa(\R)$, like in the proof \Cref{lambdaDotZero}, one can show that the derivatives in \Cref{ijlDerivative} are zero, except possibly when $n_l=n_i + n_j$ or $n_l = |n_i-n_j|$. This is only the case when $n_2 = 2n_1$ and either $i=j=1$ and $l=2$ or $i \neq j$ and $l=1$.
        \end{proof}
        
        We now show that $\overline{\alpha}_r(0,v)$ and $\overline{\lambda}_r(0,v)$ can indeed be nonzero when $n_2 =2n_1$, which is not covered by \Cref{gradZero}, by considering the second special case listed in \Cref{example:explicit_kernels}.
        
        \begin{proposition}[Special case]
            \label{prop:special_case_ii}
            Let $\sigma$ be the smallest positive solution of $x \cot(x) = 1$. When $\kappa = \sigma/\sqrt{3}$ and $\Lambda^* = \paren*{1/(2\kappa),-4\kappa^2,\pi/2}$, we have
            \begin{align*}
                \Psi_{1r}(0,v,\Lambda^*) &= \paren*{1+\frac{1}{3}\kappa^2}\sin(v)\tilde{w}_1^*,\\
                \Psi_{2r}(0,v,\Lambda^*) &= \paren*{\frac{1}{16}+\frac{1}{2}\cos^2(\sigma)}\cos^2(v)\tilde{w}_2^*,
            \end{align*}
            and so by \eqref{eq:derivative_system_case_2}, \eqref{eq:Psi_1_derivatives_case_2} and \eqref{eq:Psi_2_derivatives_case_2} that
            \[
                \begin{bmatrix}
                    \overline{\alpha}_r(0,v)\\
                    \overline{\lambda}_r(0,v)
                \end{bmatrix}
                =
                \mathbb{M}
                \begin{bmatrix}
                    \paren*{1+\frac{1}{3}\kappa^2}\sin(v) \\
                    \paren*{\frac{1}{16}+\frac{1}{2}\cos^2(\sigma)}\cos(v)\cot(v)
                \end{bmatrix},        
            \]
            for a nonsingular matrix $\mathbb{M}$ not depending on $v$. In particular, $\overline{\alpha}_r(0,v)$ and $\overline{\lambda}_r(0,v)$ are both nonzero, except possibly for isolated values of $v$.
        \end{proposition}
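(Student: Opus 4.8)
The plan is to specialize the machinery behind \Cref{2dBif} and \Cref{gradZero} to $\kappa = \sigma/\sqrt{3}$ and $\Lambda^* = (1/(2\kappa),-4\kappa^2,\pi/2)$, and then to carry out the (long but elementary) computation of $\Psi_{1r}(0,v,\Lambda^*)$ and $\Psi_{2r}(0,v,\Lambda^*)$; everything else in the statement is a formal consequence of facts already established. First I would record the explicit data for this case. The trivial solution is $\psi_0(s) = -\tfrac{1}{2\kappa}\sin(2\kappa(s-1))$, so that $\psi_{0s}(1) = -1$ and $\psi_{0ss}(1) = 0$; one has $\theta(1,\alpha^*) = i\sigma$ and $\theta(2,\alpha^*) = 0$, hence $\phi_1^*(x,s) = \cos(\kappa x)\sin(\sigma s)/\sigma$ and $\phi_2^*(x,s) = s\cos(2\kappa x)$; and since $\psi_{0s}(1) = -1$, one has $\tilde w_j^* = (\phi_j^*|_{s=1},\phi_j^*)$, while $w_j^* = \mcT(\Lambda^*)\phi_j^*$ is read off from \Cref{mcTProp}. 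Throughout, the identity $\sigma\cot\sigma = 1$ (equivalently $\cos\sigma = \sin\sigma/\sigma$) and the relation $\sigma^2 = 3\kappa^2$ are used to keep the expressions manageable.

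Next I would invoke the reduction already carried out in the proof of \Cref{gradZero}. Here $n_1 = 1$ and $n_2 = 2$, so $n_2 = 2n_1$, and the orthogonality argument in $L^2_\kappa(\R)$ leaves only the two resonant contributions $\Phi_{2t_1t_1}$ (from $n_2 = n_1+n_1$) and $\Phi_{1t_1t_2}$ (from $n_1 = n_2-n_1$); every other $\Phi_{lt_it_j}(0,0,\Lambda^*)$ vanishes. Therefore
\[
\Psi_{1r}(0,v,\Lambda^*) = \sin(v)\,\Phi_{1t_1t_2}(0,0,\Lambda^*), \qquad \Psi_{2r}(0,v,\Lambda^*) = \tfrac12\cos^2(v)\,\Phi_{2t_1t_1}(0,0,\Lambda^*),
\]
and by the formula \eqref{ijlDerivative} for $\Phi_{lt_it_j}$ the proposition reduces to checking that
\[
\frac{\ip{D_w^2\mcF(0,\Lambda^*)(w_1^*,w_2^*)}{\tilde w_1^*}_Y}{\norm{\tilde w_1^*}_Y^2} = 1 + \tfrac13\kappa^2, \qquad \frac{\ip{D_w^2\mcF(0,\Lambda^*)(w_1^*)^2}{\tilde w_2^*}_Y}{\norm{\tilde w_2^*}_Y^2} = \tfrac18 + \cos^2\sigma.
\]

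The bulk of the work --- and the step I expect to be the main obstacle --- is this pair of inner-product computations, carried out exactly as in the proof of \Cref{thm:special_case}. Using the expressions for $D_w^2\mcF_1$ and $D_w^2\mcF_2$ listed in \Cref{appendix}, each term of $D_w^2\mcF(0,\Lambda^*)(w_1^*,w_2^*)$ is a product of one factor with horizontal dependence $\cos(\kappa x)$ or $\sin(\kappa x)$ and one with $\cos(2\kappa x)$ or $\sin(2\kappa x)$, so it lies in the span of $\cos(\kappa x)$ and $\cos(3\kappa x)$; similarly $D_w^2\mcF(0,\Lambda^*)(w_1^*)^2$ lies in the span of $1$ and $\cos(2\kappa x)$. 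Hence only the $\cos(\kappa x)$-part of the former contributes to the $Y$-pairing with $\tilde w_1^*$, and only the $\cos(2\kappa x)$-part of the latter contributes to the pairing with $\tilde w_2^*$, so each inner product reduces to a one-dimensional integral over $s\in[0,1]$. These integrals involve only polynomials in $s$ multiplied by $\sin(\sigma s),\cos(\sigma s),\sin(2\kappa(s-1))$ and $\cos(2\kappa(s-1))$ --- the $\phi_2^*$-contributions being purely polynomial because $\theta(2,\alpha^*) = 0$ --- and collapse to the stated constants after repeatedly using $\sigma\cot\sigma = 1$ and $\sigma^2 = 3\kappa^2$. The norms $\norm{\tilde w_1^*}_Y^2$ and $\norm{\tilde w_2^*}_Y^2$ are obtained directly from the separated form of the $\tilde w_j^*$.

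Finally, with $\Psi_{1r}$ and $\Psi_{2r}$ in hand, the matrix identity follows by inserting them into the linear system \eqref{eq:derivative_system_case_2} and using \eqref{eq:Psi_1_derivatives_case_2} and \eqref{eq:Psi_2_derivatives_case_2} for $\Psi_{j\alpha}$ and $\Psi_{j\lambda}$: dividing the $j=2$ equation by $\sin(v)$ turns its inhomogeneous term into $(\tfrac{1}{16}+\tfrac12\cos^2\sigma)\cos(v)\cot(v)$, so that $\mathbb{M}$ is the negative inverse of the $v$-independent coefficient matrix obtained by dividing the second row of $[\Psi_{j\beta}(0,v,\Lambda^*)]_{j,\beta}$ by $\sin(v)$; this matrix is nonsingular because its determinant is a nonzero multiple of $C$ from \eqref{eq:2d_determinant}, and $C \neq 0$ under the transversality hypothesis of \Cref{2dBif}. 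The last assertion is then immediate: multiplying either $\overline\alpha_r(0,v)$ or $\overline\lambda_r(0,v)$ by $\sin(v)$ yields a nontrivial real linear combination of $\sin^2(v)$ and $\cos^2(v)$ --- nontrivial because the two inhomogeneous coefficients are strictly positive and $\mathbb{M}$ is invertible, so neither row of $\mathbb{M}$ vanishes --- and such a combination has only isolated zeros.
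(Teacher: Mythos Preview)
The paper states this proposition without proof, treating it as the outcome of a direct computation. Your proposal is correct and proceeds exactly along the lines the paper implicitly intends: you specialize the data, invoke the resonance analysis from the proof of \Cref{gradZero} to isolate $\Phi_{1t_1t_2}$ and $\Phi_{2t_1t_1}$ as the only surviving derivatives, reduce via \eqref{ijlDerivative} to two explicit inner products, and then read off the matrix identity from \eqref{eq:derivative_system_case_2}--\eqref{eq:Psi_2_derivatives_case_2}. This is precisely the computation the paper is asserting; there is nothing to compare.
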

        
        \Cref{prop:special_case_ii} shows that the sheets obtained in \Cref{2dBif} are, in general, not the same as those obtained in \cite{Ehrnstroem2011}---at least when $n_2 = 2 n_1$.
        
    \subsection{Local description of solutions}
    We finish by using \cite[Theorem 4.8]{Ehrnstroem2011} to prove a two-dimensional version of \Cref{thm:local_description_1d}, describing all nontrivial solutions in a neighborhood of a point falling into case \emph{(i)}. Let therefore $\Lambda^* \in \mcU$ be such that $M(\Lambda^*) = \{n_1,n_2\}$ with $n_1 < n_2$ and $n_1 \nmid n_2$, and such that either $r(\Lambda^*) \notin \{0,1\}$ or $\theta(n_2,\alpha^*) = 0$.
    
    Proceeding as in \Cref{LocClass1D}, we use \Cref{theorem:kernel_local_description} to conclude that there is a neighborhood of $\Lambda$ in which
    \[
        M(\Lambda) = \begin{cases}
        \{n_1,n_2\} & \alpha = \alpha^*, \mu = \mu_*(\lambda),\\
        \{n_i\} & \alpha \neq \alpha^*, \mu = \mu_i(\alpha,\lambda),\\
        \varnothing & \text{otherwise.}
        \end{cases}
    \]
    We may now apply \cite[Theorem 4.8]{Ehrnstroem2011} to each point on the graph of $\mu_*$ (where $r(\Lambda)=r(\Lambda^*)$ and $\alpha = \alpha^*$), obtaining a family $\mc S$ of bifurcating solution sheets. In addition, one has the solutions in $\mcS^{(n_1)}$ and $\mcS^{(n_2)}$, which were described before \Cref{2dBif}. These are all the nontrivial solutions near $(0,\Lambda^*)$. We omit the proof, which is essentially the same as for \Cref{thm:local_description_1d}.
    
    \begin{theorem}[Local description] The family $\mathcal{S}$ of solution sheets bifurcating from points $(0,\mu_*(\lambda),\alpha^*,\lambda)$ for $\lambda$ in a neighborhood of $\lambda^*$, together with the families $\mc S^{(n_1)}$ and $\mc S^{(n_2)}$, constitutes all nontrivial solutions in a neighborhood of $(0,\Lambda^*)$ in $X \times \mcU$.
    \end{theorem}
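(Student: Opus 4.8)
\emph{Proof proposal.} The plan is to replay the proof of \Cref{thm:local_description_1d}, with the one-dimensional bifurcation result \cite[Theorem 4.6]{Ehrnstroem2011} replaced by its two-dimensional counterpart \cite[Theorem 4.8]{Ehrnstroem2011}, and the graph of $\mu_1$ replaced by the graph of $\mu_*$.

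First I would invoke \Cref{theorem:kernel_local_description} to fix a neighborhood of $\Lambda^*$ on which
\[
    M(\Lambda) = \begin{cases} \{n_1,n_2\} & \alpha = \alpha^*,\ \mu = \mu_*(\lambda),\\ \{n_i\} & \alpha \neq \alpha^*,\ \mu = \mu_i(\alpha,\lambda),\\ \varnothing & \text{otherwise.} \end{cases}
\]
Whenever $M(\Lambda) = \varnothing$ the operator $D_w\mcF(0,\Lambda)$ is invertible, so the implicit function theorem precludes nontrivial solutions of \eqref{eq:F_problem} near $(0,\Lambda)$; hence every nontrivial solution near $(0,\Lambda^*)$ has its parameter either on the graph of $\mu_*$, or on one of the graphs of $\mu_1,\mu_2$ with $\alpha\neq\alpha^*$. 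Along the graph of $\mu_*$ we have $\alpha = \alpha^*$, and since $n_1,n_2\in M(\Lambda)$ there, the kernel equation gives $r(\Lambda) = l(n_1,\alpha^*) = r(\Lambda^*)$, while also $\theta(n_2,\alpha) = \theta(n_2,\alpha^*)$; together with the fixed arithmetic fact $n_1\nmid n_2$ this shows that the hypotheses of \cite[Theorem 4.8]{Ehrnstroem2011} are met at \emph{every} point of that graph near $\Lambda^*$. (No transversality assumption is needed, because \cite[Theorem 4.8]{Ehrnstroem2011} bifurcates in $\mu$ and $\alpha$ rather than $\lambda$.)

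Second, for each $\lambda$ near $\lambda^*$ I would apply \cite[Theorem 4.8]{Ehrnstroem2011} at the point $(0,\mu_*(\lambda),\alpha^*,\lambda)$, holding $\lambda$ fixed and bifurcating in $(\mu,\alpha)$. This produces one of the sheets making up $\mcS$, together with uniqueness in a set
\[
    U(\Lambda) \coloneqq \brac*{ (w,\mu',\alpha',\lambda) \in X\times\mcU : \norm{w} \lor \abs{\mu'-\mu_*(\lambda)} \lor \abs{\alpha'-\alpha^*} < \delta(\Lambda) },
\]
in the sense that every nontrivial solution in $U(\Lambda)$ lies on that sheet or in $\mcS^{(n_1)}\cup\mcS^{(n_2)}$; the latter two families absorb exactly the one-dimensional bifurcations occurring at the nearby parameters with $\alpha\neq\alpha^*$, $\mu = \mu_i(\alpha,\lambda)$. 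As in the proof of \Cref{thm:local_description_1d}, the regularity of the problem lets us shrink the neighborhood of $\lambda^*$ so that $\delta(\Lambda)$ may be taken to be a constant $\delta>0$. Then
\[
    U \coloneqq \bigcup_{\mu = \mu_*(\lambda)} U(\Lambda)
\]
is an open neighborhood of $(0,\Lambda^*)$ in $X\times\mcU$, and by construction it contains no nontrivial solution outside $\mcS\cup\mcS^{(n_1)}\cup\mcS^{(n_2)}$, which is the assertion.

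The one step that requires genuine care — and it is precisely the same point as in \Cref{thm:local_description_1d} — is the passage from the slabwise uniqueness sets $U(\Lambda)$, each of which freezes $\lambda$, to an honest neighborhood $U$ of $(0,\Lambda^*)$: for $U$ to be open one needs the lower bound $\delta$ on $\delta(\Lambda)$ to be uniform in $\lambda$, which follows from the smoothness of $\mu_*$ together with the uniform dependence of the bifurcation construction in \cite[Theorem 4.8]{Ehrnstroem2011} on the base point. Everything else being routine, the proof may be omitted.
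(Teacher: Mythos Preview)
Your proposal is correct and follows exactly the approach the paper indicates: the paper omits the proof, stating only that it is ``essentially the same as for \Cref{thm:local_description_1d}'', and your replay of that argument---with \cite[Theorem 4.8]{Ehrnstroem2011} in place of \cite[Theorem 4.6]{Ehrnstroem2011} and the graph of $\mu_*$ in place of that of $\mu_1$---is precisely what is intended. The extra paragraph invoking the implicit function theorem at parameters with $M(\Lambda)=\varnothing$ is harmless but unnecessary, since those points are already covered by the uniqueness statement of \cite[Theorem 4.8]{Ehrnstroem2011} applied in each $\lambda$-slab.
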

    \begin{remark}
        We could alternatively have used \Cref{2dBif} at points where the transversality condition is fulfilled. It follows that, locally, the same solutions can be found through bifurcation with either $\mu$ or $\lambda$.
    \end{remark}

\appendix
\section{Derivatives of \texorpdfstring{$\mcF$}{F}}
    \label{appendix}
    The purpose of this appendix is simply to record the derivatives of $\mcF$ with respect to $w$ at $(0,\Lambda)$, up to the third order. These are used to obtain derivatives of the bifurcation curves from \Cref{1dBif} and the bifurcation sheets from \Cref{2dBif}.
    
    We have
    \begin{align*}
        D_w \mcF_1(0,\Lambda)w &=(1-\psi_{0s}^2)\eta + \psi_{0s}\hat{\phi}_s\\
        D_w^2 \mcF_1(0,\Lambda)w^2 &=3\psi_{0s}^2\eta^2 + \psi_{0s}^2 \eta_x^2 - 4\psi_{0s}\eta \hat{\phi}_s + \hat{\phi}_s^2\\
        D_w^3 \mcF_1(0,\Lambda)w^3 &=-12 \psi_{0s}^2 \eta^3 - 6\psi_{0s}^2 \eta \eta_x^2 + 18\psi_{0s}\eta^2 \hat{\phi}_s + 6\psi_{0s} \eta_x^2 \hat{\phi}_s - 6\eta \hat{\phi}_s^2
    \end{align*}
    for $\mcF_1$, where we suppress the evaluation at $s=1$, and the derivatives
    \begin{align*}
        D_w \mcF_2(0,\Lambda)w &= - 2\psi_{0ss}\eta -s\psi_{0s} \eta_{xx} + (\partial_x^2 + \partial_s^2 - \alpha)\hat{\phi}\\
        D_w^2 \mcF_2(0,\Lambda)w^2 &=\begin{multlined}[t]6\psi_{0ss}\eta^2 + 2s\psi_{0s}\eta\eta_{xx}+(4s\psi_{0s}+2s^2\psi_{0ss})\eta_x^2 \\-4\eta\hat{\phi}_{ss} - 4s\eta_x \hat{\phi}_{xs} -2s\eta_{xx}\hat{\phi}_s\end{multlined}\\
        D_w^3 \mcF_2(0,\Lambda)w^3 &=\begin{multlined}[t]-24\psi_{0ss}\eta^3 -6s\psi_{0s}\eta^2\eta_{xx} -(24s\psi_{0s}+12s^2 \psi_{0ss})\eta\eta_x^2\\ +18\eta^2 \hat{\phi}_{ss} + 12s\eta\eta_x\hat{\phi}_{xs} + 6s\eta\eta_{xx}\hat{\phi}_s + 12s\eta_x^2 \hat{\phi}_s \\+6s^2\eta_x^2\hat{\phi}_{ss}\end{multlined}
    \end{align*}
    for $\mcF_2$.

\section*{Acknowledgments}
    The authors would like to thank the referees for several constructive comments and suggestions that helped improve the manuscript.


\begin{thebibliography}{99}
    
    \bibitem{Aasen2014}
    A.~Aasen.
    \newblock A study of rotational water waves using bifurcation theory.
    \newblock Master's thesis, Norwegian University of Science and Technology,
    2014.
    
    \bibitem{Constantin2011c}
    A.~Constantin.
    \newblock {\em Nonlinear water waves with applications to wave-current
        interactions and tsunamis}, volume~81 of {\em CBMS-NSF Regional Conference
        Series in Applied Mathematics}.
    \newblock Society for Industrial and Applied Mathematics (SIAM), Philadelphia,
    PA, 2011.
    
    \bibitem{Constantin2011}
    A.~Constantin and J.~Escher.
    \newblock Analyticity of periodic traveling free surface water waves with
    vorticity.
    \newblock {\em Ann. of Math. (2)}, 173(1):559--568, 2011.
    
    \bibitem{Constantin2004}
    A.~Constantin and W.~Strauss.
    \newblock Exact steady periodic water waves with vorticity.
    \newblock {\em Comm. Pure Appl. Math.}, 57(4):481--527, 2004.
    
    \bibitem{Constantin2011a}
    A.~Constantin and W.~Strauss.
    \newblock Periodic traveling gravity water waves with discontinuous vorticity.
    \newblock {\em Arch. Ration. Mech. Anal.}, 202(1):133--175, 2011.
    
    \bibitem{Constantin2014}
    A.~{Constantin}, W.~{Strauss}, and E.~{Varvaruca}.
    \newblock Global bifurcation of steady gravity water waves with critical
    layers.
    \newblock {\em preprint}, 2014.
    \newblock arXiv:1407.0092.
    
    \bibitem{Constantin2011b}
    A.~Constantin and E.~Varvaruca.
    \newblock Steady periodic water waves with constant vorticity: regularity and
    local bifurcation.
    \newblock {\em Arch. Ration. Mech. Anal.}, 199(1):33--67, 2011.
    
    \bibitem{Crandall1971}
    M.~G. Crandall and P.~H. Rabinowitz.
    \newblock Bifurcation from simple eigenvalues.
    \newblock {\em J. Functional Analysis}, 8:321--340, 1971.
    
    \bibitem{Dubreil-Jacotin1934}
    M.-L. Dubreil-Jacotin.
    \newblock Sur la d{\'e}termination rigoureuse des ondes permanentes
    p{\'e}riodiques d'ampleur finie.
    \newblock {\em Journal de Math{\'e}matiques Pures et Appliqu{\'e}es},
    13:217--291, 1934.
    
    \bibitem{Ehrnstroem2012}
    M.~Ehrnstr{\"o}m, J.~Escher, and G.~Villari.
    \newblock Steady water waves with multiple critical layers: interior dynamics.
    \newblock {\em J. Math. Fluid Mech.}, 14(3):407--419, 2012.
    
    \bibitem{Ehrnstroem2011}
    M.~Ehrnstr{\"o}m, J.~Escher, and E.~Wahl{\'e}n.
    \newblock Steady water waves with multiple critical layers.
    \newblock {\em SIAM J. Math. Anal.}, 43(3):1436--1456, 2011.
    
    \bibitem{Ehrnstroem2015}
    M.~Ehrnstr{\"o}m and E.~Wahl{\'e}n.
    \newblock Trimodal steady water waves.
    \newblock {\em Arch. Ration. Mech. Anal.}, 216(2):449--471, 2015.
    
    \bibitem{Escher2011}
    J.~Escher, A.-V. Matioc, and B.-V. Matioc.
    \newblock On stratified steady periodic water waves with linear density
    distribution and stagnation points.
    \newblock {\em J. Differential Equations}, 251(10):2932--2949, 2011.
    
    \bibitem{Gerstner1809}
    F.~Gerstner.
    \newblock Theorie der {W}ellen.
    \newblock {\em Annalen Der Physik}, 32:412--445, 1809.
    
    \bibitem{Grosswald1985}
    E.~Grosswald.
    \newblock {\em Representations of integers as sums of squares}.
    \newblock Springer-Verlag, New York, 1985.
    
    \bibitem{Healey1998}
    T.~J. Healey and H.~C. Simpson.
    \newblock Global continuation in nonlinear elasticity.
    \newblock {\em Arch. Rational Mech. Anal.}, 143(1):1--28, 1998.
    
    \bibitem{Henry2014}
    D.~Henry and A.-V. Matioc.
    \newblock Global bifurcation of capillary--gravity-stratified water waves.
    \newblock {\em Proc. Roy. Soc. Edinburgh Sect. A}, 144(4):775--786, 2014.
    
    \bibitem{Hur2006}
    V.~M. Hur.
    \newblock Global bifurcation theory of deep-water waves with vorticity.
    \newblock {\em SIAM J. Math. Anal.}, 37(5):1482--1521 (electronic), 2006.
    
    \bibitem{Kato1995}
    T.~Kato.
    \newblock {\em Perturbation theory for linear operators}.
    \newblock Classics in Mathematics. Springer-Verlag, Berlin, 1995.
    
    \bibitem{Kielhoefer2012}
    H.~Kielh{\"o}fer.
    \newblock {\em Bifurcation theory}, volume 156 of {\em Applied Mathematical
        Sciences}.
    \newblock Springer, New York, second edition, 2012.
    
    \bibitem{Kinderlehrer1978}
    D.~Kinderlehrer, L.~Nirenberg, and J.~Spruck.
    \newblock Regularity in elliptic free boundary problems.
    \newblock {\em J. Analyse Math.}, 34:86--119 (1979), 1978.
    
    \bibitem{Kozlov2011}
    V.~Kozlov and N.~Kuznetsov.
    \newblock Steady free-surface vortical flows parallel to the horizontal bottom.
    \newblock {\em Quart. J. Mech. Appl. Math.}, 64(3):371--399, 2011.
    
    \bibitem{Kozlov2014}
    V.~Kozlov and N.~Kuznetsov.
    \newblock Dispersion equation for water waves with vorticity and {S}tokes waves
    on flows with counter-currents.
    \newblock {\em Arch. Ration. Mech. Anal.}, 214(3):971--1018, 2014.
    
    \bibitem{Matioc2014}
    B.-V. Matioc.
    \newblock Global bifurcation for water waves with capillary effects and
    constant vorticity.
    \newblock {\em Monatsh. Math.}, 174(3):459--475, 2014.
    
    \bibitem{Mei1984}
    C.~C. Mei.
    \newblock The applied dynamics of ocean surface waves.
    \newblock {\em Ocean Engineering}, 11(3):321--, 1984.
    
    \bibitem{Morrey1957}
    C.~B. Morrey, Jr. and L.~Nirenberg.
    \newblock On the analyticity of the solutions of linear elliptic systems of
    partial differential equations.
    \newblock {\em Comm. Pure Appl. Math.}, 10:271--290, 1957.
    
    \bibitem{Plotnikov2004}
    P.~I. Plotnikov and J.~F. Toland.
    \newblock Convexity of {S}tokes waves of extreme form.
    \newblock {\em Arch. Ration. Mech. Anal.}, 171(3):349--416, 2004.
    
    \bibitem{Rabinowitz1971}
    P.~H. Rabinowitz.
    \newblock Some global results for nonlinear eigenvalue problems.
    \newblock {\em J. Functional Analysis}, 7:487--513, 1971.
    
    \bibitem{Shatah2013}
    J.~Shatah, S.~Walsh, and C.~Zeng.
    \newblock Travelling water waves with compactly supported vorticity.
    \newblock {\em Nonlinearity}, 26(6):1529--1564, 2013.
    
    \bibitem{Thomas1997}
    G.~Thomas and G.~Klopman.
    \newblock Wave-current interactions in the near shore region.
    \newblock In {\em Gravity waves in water of finite depth}, volume~10 of {\em
        Advances in fluid mechanics}, pages 215--319. Computational Mechanics
    Publications, Southampton,UK, 1997.
    
    \bibitem{Toland1996}
    J.~F. Toland.
    \newblock Stokes waves.
    \newblock {\em Topol. Methods Nonlinear Anal.}, 7(1):1--48, 1996.
    
    \bibitem{Vanden-Broeck1996}
    J.-M. Vanden-Broeck.
    \newblock Periodic waves with constant vorticity in water of infinite depth.
    \newblock {\em IMA J. Appl. Math.}, 56(3):207--217, 1996.
    
    \bibitem{Varholm2016}
    K.~Varholm.
    \newblock Solitary gravity-capillary water waves with point vortices.
    \newblock {\em Discrete Contin. Dyn. Syst.}, 36(7):3927--3959, 2016.
    
    \bibitem{Varvaruca2006}
    E.~Varvaruca.
    \newblock Singularities of {B}ernoulli free boundaries.
    \newblock {\em Comm. Partial Differential Equations}, 31(10-12):1451--1477,
    2006.
    
    \bibitem{Wahlen2009}
    E.~Wahl{\'e}n.
    \newblock Steady water waves with a critical layer.
    \newblock {\em J. Differential Equations}, 246(6):2468--2483, 2009.
    
    \bibitem{Walsh2014a}
    S.~Walsh.
    \newblock Steady stratified periodic gravity waves with surface tension {II}:
    global bifurcation.
    \newblock {\em Discrete Contin. Dyn. Syst.}, 34(8):3287--3315, 2014.
    
\end{thebibliography}
\end{document}